\newcommand{\cd}[1]{\begin{equation*}{\xymatrix{#1}}\end{equation*}}
\newcommand{\cdlabel}[2]{\begin{equation}\label{#1}{\xymatrix{#2}}\end{equation}}
\newcommand{\invlim}{\underleftarrow{\lim}}
\newcommand{\dirlim}{\underrightarrow{\lim}}
\newcommand{\twid}[1]{\tilde{#1}}
\newcommand{\rrangle}{\rangle\negthinspace\rangle}
\newcommand{\llangle}{\langle\negthinspace\langle}
\newcommand{\linj}{\mathrm{loginj}}
\newcommand{\lsurj}{\mathrm{logsurj}}
\newcommand{\consum}{\mathrel{\#}}
\newcommand{\mc}{\mathcal}
\newcommand{\join}{\ast}
\newcommand{\coker}{\mathrm{coker}}
\def\co{\colon\thinspace}
\def\R{\mathbb{R}}
\def\Z{\mathbb{Z}}
\def\H{\mathbb{H}}
\def\N{\mathbb{N}}
\def\E{\mathbb{E}}
\newtheorem{theorem}{Theorem}[section]
\newtheorem{lemma}[theorem]{Lemma}
\newtheorem{corollary}[theorem]{Corollary}
\newtheorem{proposition}[theorem]{Proposition}
\newtheorem{question}[theorem]{Question}
\newtheorem{claim}[theorem]{Claim}
\theoremstyle{definition}
\newtheorem{remark}[theorem]{Remark}
\newtheorem{definition}[theorem]{Definition}
\newtheorem{observation}[theorem]{Observation}
\newtheorem{useful}[theorem]{Useful facts}
\newtheorem{assumption}[theorem]{Standing assumption}
\def\version{\today}
\begin{document}

\author[K. Fujiwara]{Koji Fujiwara}
\address{Graduate School of Information Science, Tohoku University}
\email{fujiwara@math.is.tohoku.ac.jp}

\author[J. F. Manning]{Jason Fox Manning}
\address{Department of Mathematics, University at Buffalo, SUNY}
\email{j399m@buffalo.edu}

\title{CAT$(0)$ and CAT$(-1)$ fillings of hyperbolic manifolds}

\thanks{\version}

\begin{abstract}
We give new examples of hyperbolic and relatively hyperbolic
groups of cohomological dimension
$d$ for all $d\geq 4$ (see Theorem \ref{t:examples}).  These examples result from applying
CAT$(0)$/CAT$(-1)$ filling constructions (based on singular doubly
warped products) to finite volume hyperbolic
manifolds with toral cusps.

The groups obtained have a number of interesting properties, which are
established by analyzing their boundaries at infinity by a kind of
Morse-theoretic technique, related to but distinct from ordinary and
combinatorial Morse theory (see Section \ref{s:visbound}).
\end{abstract} 

\maketitle

\tableofcontents

\section{Introduction}
In this paper we study some generalizations of the 
Gromov-Thurston $2\pi$ theorem \cite{BlHo}.  Informally, the
$2\pi$ theorem states that ``most'' Dehn fillings of a hyperbolic
$3$--manifold with cusps admit negatively curved metrics.  Moreover,
these negatively curved metrics are close approximations of the metric
on the original cusped manifold.
Group-theoretically, the fundamental groups of these fillings are
relatively hyperbolic (defined below).  If all cusps of the original manifold were
filled, the fundamental group of the filling is hyperbolic;  if
moreover the original manifold was finite volume, the filling is an
aspherical closed $3$--manifold.

In the world of coarse geometry, the correct generalizations of
fundamental groups of hyperbolic manifolds with and without cusps are
\emph{relatively hyperbolic} and \emph{hyperbolic} groups,
respectively.  These generalizations were introduced by Gromov in
\cite{gromov:hg}.  We give a quick review of the definitions:
A metric space is
\emph{$\delta$--hyperbolic} for $\delta>0$
if all its geodesic triangles are
$\delta$--thin, meaning each side of the triangle is contained in the
$\delta$--neighborhood of the other two.  A space is \emph{Gromov
  hyperbolic} if it is $\delta$--hyperbolic for some $\delta$.
Groups which are hyperbolic
or relatively hyperbolic are those which have particular kinds of
actions on Gromov hyperbolic spaces.  If $G$ acts by isometries on a
Gromov hyperbolic space $X$ properly and cocompactly, then $G$ is said to be
\emph{hyperbolic}.  
A \emph{geometrically
finite} action of a group $G$ on a Gromov hyperbolic space $X$ is one
which satisfies the following conditions:
\begin{enumerate}
\item $G$ acts properly and by isometries on $X$.
\item There is a family $\mc{H}$ of disjoint \emph{horoballs}
  (sub-level sets of so-called \emph{horofunctions}), preserved by the
  action.  
\item The action on $X\setminus \bigcup \mc{H}$ is cocompact.
\item The quotient $X/G$ is quasi-isometric to a wedge of rays.
\end{enumerate}
(See \cite{gromov:hg} for more details.)
If $G$ acts on $X$ geometrically finitely,
then $G$ is said to be \emph{relatively hyperbolic}, relative to  
the \emph{peripheral} subgroups, which are the
stabilizers of individual horoballs.  (Typically the list of peripheral
subgroups is given by choosing one such stabilizer from each conjugacy
class.)

In \cite{GM} and \cite{Os}, group-theoretic analogues of
the $2\pi$ theorem were proved.  These theorems roughly state that
if one begins with a relatively hyperbolic group, and kills off normal
subgroups of the peripheral subgroups which contain no ``short''
elements, then the resulting group is also relatively hyperbolic, and
``close'' to the original group in various ways.  From this one
recovers a weaker group-theoretic version of the $2\pi$ theorem, as the
fundamental group of a hyperbolic $3$--manifold is relatively
hyperbolic, relative to the cusp groups;  the operation of Dehn
filling acts on fundamental groups by killing the cyclic subgroups
generated by the filling slopes.

In the context of the original $2\pi$ theorem, we begin with a group
which has such an action on $\H^3$, which is not only
$\delta$--hyperbolic, but which is actually CAT$(-1)$.
(See Section \ref{s:prelim} for the
definition of CAT$(\kappa)$ for $\kappa\in \R$.  For now, just note
that CAT$(-1)$ implies Gromov hyperbolic.)  Moreover, the
group acts cocompactly on a ``neutered'' $\H^3$, which is CAT$(0)$
with isolated flats.  The fundamental group of the filled manifold
acts properly and cocompactly on a CAT$(-1)$ $3$--manifold.

For $\kappa\leq 0$, we say that a group is \emph{CAT$(\kappa)$} if it
acts isometrically, properly, and cocompactly on some CAT$(\kappa)$ space.
We will say that a group is \emph{relatively CAT$(-1)$} if it has a
geometrically finite action
on a CAT$(-1)$ space.  A group is
\emph{CAT$(0)$ with isolated flats} if it acts isometrically, properly, and
cocompactly on some CAT$(0)$ space with isolated flats.  (It follows
from a result of Hruska and Kleiner \cite{HK} that horoballs can
always be added to such a space to make it $\delta$--hyperbolic for
some $\delta$.  In particular, a group which is CAT$(0)$ with isolated
flats is relatively hyperbolic.)
It is natural
to wonder to what extent the main results of \cite{GM} and \cite{Os}
can be strengthened in the presence of these stronger conditions.

\begin{question}
 If one performs relatively hyperbolic Dehn filling on a
  relatively CAT$(-1)$ group, is the filled group always relatively
  CAT$(-1)$?
\end{question}
\begin{question}
 If one performs relatively hyperbolic Dehn filling on a CAT$(0)$
  with isolated flats group, is the filled group always CAT$(0)$ with
  isolated flats?
\end{question}

These questions form a part of the motivation for this paper; we
answer them positively in some special cases. 
The utility of these answers is illustrated by
using the additional CAT$(0)$ structure of the quotient groups to
obtain information about the filled groups which is less accessible
from the coarse geometric viewpoint.

\subsection{Outline}
In Section \ref{filling}, we describe the main results.  In Section
\ref{s:prelim}, we recall the definition of warped product metric and
an important result of Alexander and Bishop.  We also discuss the
space of directions and the logarithm map from a CAT$(0)$ space to the
tangent cone at a point.  In Section \ref{s:basic}, we extend original
construction of the $2\pi$ theorem to our setting, using doubly warped
product constructions to give
locally CAT$(0)$ and locally CAT$(-1)$ models for our fillings.
Section \ref{s:visbound} contains the ``Morse theoretic'' arguments
which give information about the boundary at infinity of the
fundamental groups of our fillings.  In Section
\ref{s:questions}, we pose some questions.

\subsection{Acknowledgments}
Fujiwara started working on this project
during his visit at Max Planck Institute in Bonn
in the fall of 2005. He has benefited from 
conversation with Potyagailo.
The joint project with Manning started when he visited 
Caltech in April 2006.
He thanks both institutions for their hospitality. He is partially
supported by Grant-in-Aid for Scientific Research (No. 19340013). 

Manning thanks Ric Ancel for explaining how to prove Claim
\ref{horiz} in the proof of Proposition \ref{p:pinch}, and thanks Noel
Brady and Daniel Groves for useful conversations.  He was
partially supported by NSF grants DMS-0301954 and DMS-0804369.

\section{Statements of results}\label{filling}

\subsection{Cones and fillings}
We first establish some notation:
\begin{definition}[Partial cone]\label{def.cone}
Let $N$ be a flat torus of dimension $n$, and let $T$ be a totally
geodesic $k$--dimensional submanifold in $N$, where $1\leq k\leq n$.  The torus $N$
has Euclidean universal cover $\E^n\to N$; let $\twid{T}\cong \E^k$ be a
component of the preimage of $T$ in $N$.  The parallel copies of
$\twid{T}$ are the leaves of a fibration
$\E^n\stackrel{\twid{\pi}}{\longrightarrow}\E^{n-k}$.  This fibration covers
a fibration
\[ N \stackrel{\pi}{\longrightarrow} B\]
of $N$ over some $(n-k)$--torus $B$.  We define the \emph{partial
  cone} $C(N,T)$ to be the mapping cylinder of $\pi$, i.e.
\[ C(N,T) = N \times [0,1] / \sim, \]
where $(t_1,1) \sim (t_2,1)$ if $\pi(t_1)=\pi(t_2)$.  

We refer to $N\times 0$ as the \emph{boundary} of $C(N,T)$, and to
$V(N,T):=N\times 1/\sim$ as the \emph{core}.
\end{definition}

\begin{figure}[htbp]
\begin{center}
\input{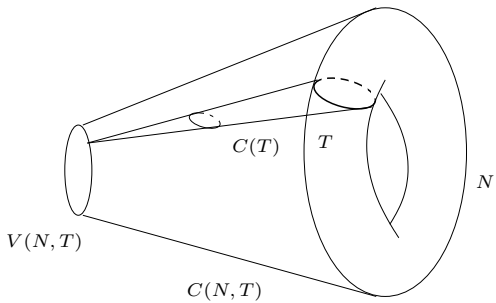}
\caption{cone $C(N,T)$}
\end{center}
\end{figure}

\begin{remark}
  The space $C(N,N)$ is the cone on $N$.  In the case that $N$ is a
  metric product $N = T\times B$, then $C(N,T)$ can be canonically
  identified with $C(T)\times B$.  (Although the identification is not
  canonical in general, $C(N,T)$ is always homeomorphic to $C(T)\times
  B$ for some torus $B$ of complementary dimension to that of $T$.  In
  particular, the core $V(N,T)$ is always homeomorphic to a torus of
  dimension $\dim(N)-\dim(T)$.)  The partial cone $C(N,T)$ is a
  manifold with boundary only when $\dim(T) = 1$.
\end{remark}

\begin{definition}
  Let $M$ be a hyperbolic $(n+1)$--manifold.  A \emph{toral cusp} of
  $M$ is a neighborhood $E$ of an end of $M$ so that 
  \begin{enumerate}
    \item $E$ is homeomorphic to the product of a torus with the
      interval $(-\infty,0]$, and
    \item the induced Riemannian metric on $\partial E$ is flat.
  \end{enumerate}
\end{definition}

For the rest of the section we make the following:
\begin{assumption}\label{standing}
  $M$ is a hyperbolic $(n+1)$--manifold of finite volume containing
  disjoint toral cusps $E_1,\ldots,E_m$, and no other ends.
\end{assumption}

Let $\overline{M}\subset
M$ be the manifold with boundary obtained by removing the interiors of
the cusps.  Let $N_1,\ldots,N_m$ be the components of $\partial \overline{M}$.

\begin{definition}[Filling, $2 \pi$--filling]\label{d:filling}
For each component $N_i$ of $\partial \overline{M}$ choose an embedded totally
geodesic torus $T_i$ of dimension $k_i$.  Form $C(N_i,T_i)$ as in
the previous definition.  We use the canonical identification $\phi_i$
of $\partial C(N_i,T_i)$ with $N_i$ to form the space
\[ M(T_1,\ldots,T_m):=
\overline{M}\bigcup_{\phi_1\sqcup\cdots\sqcup\phi_m}(C(N_1,T_1)\sqcup\cdots\sqcup C(N_m,T_m)).\]
We say that $M(T_1,\ldots,T_m)$ is \emph{obtained from $M$ by filling
  along $\{T_1,\ldots,T_m\}$}.  We refer to the cores of the
$C(N_i,T_i)$ as the \emph{filling cores}.

If no $T_i$ contains a geodesic of length $2\pi$ or less, we say that
$M(T_1,\ldots,T_m)$ is a \emph{$2\pi$--filling of $M$}.
\end{definition}

\begin{remark}
The space $M(T_1,\ldots,T_m)$ described above
is homeomorphic to a manifold if and only if every filling core has
dimension exactly $n-1$.  In any case $M(T_1,\ldots,T_m)$ is a
pseudomanifold of dimension $(n+1)$, and the complement of the filling
cores is homeomorphic to $M$.
\end{remark}

On the level of fundamental groups, $\pi_1(M(T_1,\ldots,T_m))$ is
obtained from $\pi_1(M)$ by adjoining relations representing the
generators of some direct summands of the cusp subgroups;  namely, for
each $i\in \{1,\ldots,m\}$, one adjoins relations corresponding to the
generators of $\pi_1(T_i)\lhd\pi_1(E_i) = \pi_1(N_i)$. 

\subsection{Main results}
Our main theorem says that under hypotheses exactly analogous to those
used in the $2\pi$ theorem, a space obtained by filling as above can
be given a nice locally CAT$(0)$ metric; moreover, if all the filling
cores are zero or one dimensional, the metric can be chosen to be
locally CAT$(-1)$.  We remind the reader that, under the condition
\ref{standing},  $M$ is assumed to be a
hyperbolic $(n+1)$--manifold with only toral cusps.

\begin{theorem}\label{t:filling}
Let $n\geq 2$ and assume the condition \ref{standing}.

If $M(T_1,\ldots,T_m)$ is a $2\pi$--filling of $M$,
then there is a complete path metric $d$ on
$M(T_1,\ldots,T_m)$ satisfying the 
following:
\begin{enumerate}
\item\label{fill1} The path metric $d$ is the completion of a path metric induced
  by a negatively curved Riemannian metric on the complement of the filling cores.
\item\label{fill2} The path metric $d$ is locally CAT$(0)$.
\item\label{fill3} If for every $i$, $T_i$ has codimension at most $1$ in $\partial
  E_i$,
  then $d$ is locally CAT$(-\kappa)$ for some $\kappa>0$.
\end{enumerate}
\end{theorem}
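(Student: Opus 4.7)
The plan is to modify the hyperbolic metric on each cusp $E_i$, replacing it by a doubly warped product metric that smoothly extends over the new filling core $V(N_i,T_i)$ and is negatively curved on its complement. The CAT curvature bounds will follow by combining the Alexander--Bishop warped-product theorem (Section \ref{s:prelim}) with a tangent-cone analysis at core points.

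For each cusp $E_i$, pass to the universal cover $\tilde E_i\subset\H^{n+1}$, a horoball with horospherical cross-section $\E^n$. The lift $\tilde T_i$ is an affine $k_i$-plane in $\E^n$; since $T_i$ is totally geodesic in the flat torus $N_i$, the orthogonal splitting $\E^n=\tilde T_i\oplus\tilde T_i^\perp$ is preserved by the deck group $\pi_1(N_i)\cong\Z^n$. Let $g_{\tilde T},g_{\tilde B}$ be the Euclidean metrics on the two factors. Replace the hyperbolic cusp metric $dr^2+e^{-2r}(g_{\tilde T}+g_{\tilde B})$ by the doubly warped product
\[
g_{f,h} \;=\; dr^2 + f(r)^2\,g_{\tilde T} + h(r)^2\,g_{\tilde B},
\]
where $f,h\co(0,\infty)\to(0,\infty)$ are smooth and satisfy $f=h=e^{-r}$ past the interpolation interval (smooth gluing to the hyperbolic metric on $\overline{M}$), $f(0)=0$, $f'(0)=1$, $h(0)>0$, $h'(0)=0$ (smooth extension across the core), and $f''>0$, $h''>0$ on the interpolation interval. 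Translation invariance of $g_{f,h}$ in the $\E^n$-directions lets it descend through $\Z^n$ and extend continuously across the core.

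Sectional curvatures of $g_{f,h}$ are explicit in $f,h$ and their derivatives: radial planes contribute $-f''/f$ or $-h''/h$; mixed $\tilde T$-$\tilde B$ planes contribute $-f'h'/(fh)$; tangential $\tilde T$- and $\tilde B$-planes contribute $-(f'/f)^2$ and $-(h'/h)^2$. With $f,h$ strictly convex and sufficiently close to $e^{-r}$ on the interpolation interval, all of these are strictly negative, giving item (\ref{fill1}). For item (\ref{fill2}), local CAT(0) on the smooth locus is immediate from strict negative Riemannian curvature. At a core point, a Taylor expansion at $r=0$ identifies the tangent cone with the metric product $C(T_i)\times\E^{\dim B_i}$, where $C(T_i)$ is the Euclidean cone on the flat torus $T_i$; by the standard cone theorem, this is CAT(0) iff $T_i$ is CAT(1) iff its systole is at least $2\pi$, which is precisely the $2\pi$--filling hypothesis. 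Local CAT(0) near the core then follows by Alexander--Bishop applied to the singular doubly warped structure.

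For item (\ref{fill3}), the condition $\dim B_i\leq 1$ is essential: if $\dim B_i\geq 2$, the core is a flat totally geodesic $\geq 2$-submanifold, which no CAT($-\kappa$) metric can contain. Under the codimension hypothesis, replace the near-Euclidean cone behavior of $f$ by a hyperbolic cone, e.g., $f(r)=\sinh(r)$ near $r=0$. The tangent cone at a core point becomes either a hyperbolic cone on $T_i$ (when $\dim B_i=0$) or a hyperbolic cone on $T_i$ crossed with a flat circle (when $\dim B_i=1$). In the latter case, additionally choose $h$ strictly convex near $r=0$ (e.g., $h(r)=h_0\cosh(\mu r)$ for small $\mu>0$) so the local structure is not a metric product containing a flat $2$-plane. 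Alexander--Bishop combined with the tangent-cone analysis then yields a uniform CAT($-\kappa$) bound for some $\kappa>0$. The main technical difficulty is the CAT($-\kappa$) verification when $\dim B_i=1$: the interpolation must preserve strict negativity of \emph{every} sectional curvature across the singular core and must satisfy the doubly-warped Alexander--Bishop hypotheses, forcing a carefully balanced choice of both $f$ and $h$ on the interpolation interval.
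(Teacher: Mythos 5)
Your approach is essentially the one the paper takes: replace each cusp metric by a doubly warped product $dr^2 + f^2 g_T + h^2 g_B$ in which the $T$-direction collapses at the new filling core, verify nonpositive curvature on the Riemannian locus via the explicit sectional-curvature formulas (Proposition \ref{p:sectional}), and handle the singular core via the Alexander--Bishop warped-product theorem (Theorem \ref{t:AB}). But there are two concrete problems with the execution.

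First, your stated warping-function hypotheses cannot be met. You require $f(0)=0$, $f'(0)=1$, $f''>0$ throughout the interpolation region, \emph{and} $f=e^{-r}$ past it. With $f'(0)=1>0$ and $f''>0$, the derivative $f'$ is increasing, hence stays strictly positive; but $e^{-r}$ has negative derivative. There is no $C^1$ (let alone $C^2$) function satisfying all four constraints, and strict convexity of $f$ and $h$ is exactly what the sectional-curvature formulas need. The paper resolves this by orienting the radial coordinate \emph{outward} from the core: the cusp is reparametrized as $(-\infty,1+\lambda]\times_{e^{r-1}}N$, so that the model exterior function $e^{r-1}$ is increasing and convex, and the two warping functions then interpolate monotonically from $\sinh(r),\cosh(r)$ near $r=0$ to $e^{r-1}$ near $r=1+\lambda$ (Proposition \ref{p:warpfg}, using Agol's smoothing lemma \ref{l:agol} and an affine-interpolation lemma \ref{l:interp} to keep strict convexity). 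This is a sign error in your writeup, easily fixed, but the functions you actually wrote down do not exist.

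Second, the step ``a Taylor expansion at $r=0$ identifies the tangent cone with $C(T_i)\times\E^{\dim B_i}$; by the standard cone theorem this is CAT$(0)$'' is not, on its own, a proof of local CAT$(0)$. The tangent cone at a point being CAT$(0)$ is necessary but not sufficient for that point to have a CAT$(0)$ neighborhood. The paper instead applies Alexander--Bishop \emph{directly} to the local model
\[
\tilde F = [0,\infty)\times_{\cosh r}\E^k\times_{\sinh r}T,
\]
in two stages (Lemma \ref{l:step1} and Proposition \ref{p:double}), verifying the $\mathcal{F}K$-convexity hypotheses and the boundary condition $(f\circ\sigma)'(0^+)^2\geq K_F$ from Theorem \ref{t:AB}; it is here that the $2\pi$ hypothesis (injectivity radius $>\pi$, i.e.\ $T$ CAT$(1)$) enters. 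Your follow-up sentence (``follows by Alexander--Bishop applied to the singular doubly warped structure'') is the right idea, but the tangent-cone digression suggests a shortcut that is not available. A related omission: you do not address why the resulting length metric is complete and is the metric completion of the Riemannian part; the paper needs Lemma \ref{l:completion} for this. For item (3), your diagnosis of the obstruction when $\dim B_i\geq 2$ is correct; the paper's choice $g(r)=\cosh(r)$ near $0$ is cleaner than your $h_0\cosh(\mu r)$ with small $\mu$, since $\cosh$ is exactly $\mathcal{F}(-1)$-convex and yields a CAT$(-1)$ core model directly, whereas the scaled version would only give $\mathcal{F}(-\mu^2)$-convexity.
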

The metric constructed in Theorem \ref{t:filling} is identical to the
original hyperbolic metric, away from a neighborhood of the filling cores.  We
will prove the following in the course of showing Theorem
\ref{t:filling}.
\begin{proposition}\label{p:hyper}
  Let $n\geq 2$ and assume the condition \ref{standing}, and let 
  $M(T_1,\ldots,T_m)$ be a
  $2\pi$--filling of $M$.  Let 
  \[ \overline{M} = M\setminus \bigcup_{i=1}^m E_i \]
  be as described in
  Definition \ref{d:filling}.
  Choose $\lambda>0$ so that the
  $\lambda$--neighborhood
  of $\bigcup_{i=1}^mE_i$ is embedded in $M$.  
  Let $\overline{M}'$ be the complement in $M$ of the 
  $\frac{\lambda}{2}$--neighborhood of
  $\bigcup_{i=1}^mE_i$.  We thus have inclusions
  $\overline{M}'\subset \overline{M}\subset M$, and
  $\overline{M}'\subset \overline{M}\subset M(T_1,\ldots,T_m)$. 

  The Riemannian metric described in Theorem \ref{t:filling} is equal
  to the hyperbolic metric, when restricted to $\overline{M}'$.
\end{proposition}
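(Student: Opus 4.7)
The plan is to revisit the construction of the metric $d$ given in the proof of Theorem \ref{t:filling} and observe that, by design, the modification of the hyperbolic metric is confined to an arbitrarily thin collar of $\bigcup_i E_i$; choosing that collar to have width less than $\lambda/2$ gives the proposition.

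First I would set up coordinates on each cusp and its outward collar. For each $i$, the $\lambda$--neighborhood of $E_i$ is by hypothesis embedded in $M$, so it carries standard cusp coordinates $(r,x) \in (-\infty,\lambda] \times N_i$ in which the hyperbolic metric has the warped product form $dr^2 + e^{-2r}g_{N_i}$. In these coordinates, $E_i = \{r\le 0\}$ and the $\lambda/2$--neighborhood of $E_i$ is $\{r \le \lambda/2\}$, so $\overline{M}'$ corresponds to $\{r>\lambda/2\}$ union with the part of $M$ that lies at distance greater than $\lambda/2$ from every cusp.

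Next I would recall that the filling metric on each $C(N_i,T_i)$ is built in Section \ref{s:basic} as a doubly warped product; the warping functions defining it are engineered to match, near $\partial C(N_i,T_i)=N_i$, the hyperbolic warping $e^{-r}$ (and its restriction to the $T_i$-factor), and then to be deformed only in a neighborhood of the core $V(N_i,T_i)$. Concretely, there is a radius $r_0>0$ such that for $r > r_0$ the warping functions are exactly the hyperbolic ones, so that for $r > r_0$ the constructed metric on $C(N_i,T_i) \cup (\text{outward collar})$ coincides with the hyperbolic metric $dr^2 + e^{-2r}g_{N_i}$.

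The key observation is that $r_0$ is a free parameter in the construction. The $2\pi$--condition on $T_i$, the local CAT$(0)$ property, and (under the codimension hypothesis) the local CAT$(-1)$ property are all conditions on the warping data near the core $\{r=-\infty \text{ in the filling side}\}$; they place no restriction on where the warpings are allowed to coincide with the hyperbolic ones. I may therefore take $r_0 < \lambda/2$ for every $i$. With this choice, the filling metric equals the hyperbolic metric on the region $\{r > \lambda/2\}$ around each cusp, and is unchanged elsewhere, which is precisely the statement on $\overline{M}'$. The main obstacle is purely bookkeeping: checking that the argument of Theorem \ref{t:filling} actually uses warping data which can be taken to agree with $e^{-r}$ outside the $\lambda/2$--collar, so that no estimates in that proof are disturbed by imposing this constraint.
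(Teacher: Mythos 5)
Your proposal follows the paper's argument: the warping functions $f$, $g$ of Proposition~\ref{p:warpfg} are built so that $f(r)=g(r)=e^{r-1}$ for $r>1+\frac{\lambda}{2}$, hence the doubly warped metric on $C(N_i,T_i)$ agrees with the hyperbolic metric $(-\infty,1+\lambda]\times_{e^{r-1}}N_i$ on the outer $\frac{\lambda}{2}$--collar, and the gluing maps $\psi_i$ are isometries there, so the filling metric is hyperbolic on $\overline{M}'=M_{\lambda/2}$ by construction. (One cosmetic slip: with $E_i=\{r\le 0\}$ and the cusp at $r\to-\infty$, the warping should be $e^{2r}$, not $e^{-2r}$; also, the paper does not first build an arbitrary filling metric and then shrink a free parameter $r_0$---it fixes $\lambda$ in advance and feeds it into Proposition~\ref{p:warpfg}, so the matching is built in rather than imposed afterward, but this is only a difference of presentation.)
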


\begin{remark}
  Let $G = \pi_1(M(T_1,\ldots,T_m))$ be the fundamental group of some
  $2\pi$--filling of $M$.
  It follows from Theorem \ref{t:filling} that
  $H_{n+1}(G) \cong H_{n+1}(M(T_1,\ldots,T_m))$ is cyclic.  One can ask what the Gromov
  norm of a generator is, and define this number to be the
  \emph{simplicial volume} of $M(T_1,\ldots,T_m)$ (or of $G$).  In case
  $M(T_1,\ldots,T_m)$ is a manifold, this definition is the same as
  Gromov's in \cite{gromov:vbc}.  In the setting of the original
  $2\pi$ theorem, both $M$ and $M(T_1,\ldots,T_m)$ are hyperbolic
  manifolds, and it was shown by Thurston that the simplicial volume of
  $M(T_1,\ldots,T_m)$ is bounded above by the hyperbolic volume of
  $M$, divided by a constant \cite{Th}.  We extend Thurston's result to higher
  dimensions in \cite{FM2}.
\end{remark}

\begin{remark}\label{r:specialcases}
Special cases of Theorem \ref{t:filling} appear in 
\begin{itemize}
\item the Gromov-Thurston
  $2\pi$--theorem \cite{BlHo}, in which $n=2$ and each
  filling core is a circle,
\item Schroeder's generalization of the $2\pi$ theorem to $n>2$, with
  filling cores equal to 
  $(n-1)$--dimensional tori \cite{Sc} (This is the case in which every
  $T_i$ is one-dimensional, and the only case in which
  the filling $M(T_1,\ldots,T_m)$ is a manifold.), and
\item Mosher and Sageev's construction in \cite{MS} of $(n+1)$--dimensional
  hyperbolic groups which are not aspherical manifold groups; here the
  filling cores are points.
\end{itemize}
The construction of Mosher and Sageev is based on a discussion in
\cite[7.A.VI]{G93}, and the examples constructed by Mosher and Sageev
answered a question which appeared in an early version of Bestvina's
problem list \cite{Bes}.  The new examples of the current paper fall in some
sense \emph{between} those of Schroeder, and those of Mosher-Sageev.
\end{remark}

Our construction gives the following corollary, which can be deduced
from Theorem \ref{t:filling} and Proposition \ref{p:iflats}.
\begin{corollary}\label{c:cwif}
Let $n\geq 2$ and assume the condition \ref{standing}.

Let $M(T_1,\ldots,T_m)$ be a $2\pi$--filling of $M$, and let 
$G$ be the fundamental group of $M(T_1,\ldots,T_m)$.
Then $G$ is CAT$(0)$ with isolated flats.  If each $T_i$
has dimension $n$ or 
$n-1$, then $G$ is CAT$(-1)$.
\end{corollary}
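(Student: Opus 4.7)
The plan is to read the corollary directly off of Theorem \ref{t:filling} and Proposition \ref{p:iflats}, together with a metric Cartan--Hadamard argument.

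First consider the CAT$(-1)$ statement. The hypothesis that each $T_i$ has dimension $n$ or $n-1$ says exactly that each $T_i$ has codimension at most $1$ in the $n$-dimensional torus $N_i=\partial E_i$, so hypothesis (\ref{fill3}) of Theorem \ref{t:filling} applies and produces a complete locally CAT$(-\kappa)$ path metric $d$ on $M(T_1,\ldots,T_m)$ for some $\kappa>0$. Rescaling $d$ by $\sqrt{\kappa}$ makes it locally CAT$(-1)$. Since $\overline{M}$ is compact and each partial cone $C(N_i,T_i)$ is compact, the pseudomanifold $M(T_1,\ldots,T_m)$ is compact, so $G=\pi_1(M(T_1,\ldots,T_m))$ acts properly and cocompactly by isometries on the universal cover $\twid{X}$ equipped with the lifted length metric. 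The metric Cartan--Hadamard theorem then says that $\twid{X}$, being simply connected, complete, and locally CAT$(-1)$, is globally CAT$(-1)$. By the definition given in the introduction, $G$ is CAT$(-1)$.

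For the general (CAT$(0)$ with isolated flats) statement, the same argument, now using part (\ref{fill2}) of Theorem \ref{t:filling}, produces a geometric action of $G$ on a CAT$(0)$ space $\twid{X}$. The remaining content --- that $\twid{X}$ admits an isolated flats structure in the sense of Hruska--Kleiner --- is exactly Proposition \ref{p:iflats}, to which we defer.

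The only step which is not a direct citation is the verification of isolated flats via Proposition \ref{p:iflats}; this is the main obstacle and is the reason the corollary is deferred to that proposition. Concretely, one expects the maximal flats in $\twid{X}$ to be precisely the universal covers of the filling cores $V(N_i,T_i)$, which are flat $(n-k_i)$-tori in the pseudomanifold. Isolation requires showing that two distinct such lifts have bounded coarse intersection, and that no non-trivial flat strip can extend from a lift out into the complement of the cores. Both of these facts should follow by combining the strict negative curvature on the complement of the cores, guaranteed by Theorem \ref{t:filling}(\ref{fill1}), with the local product structure of the doubly warped product in a neighborhood of each core.
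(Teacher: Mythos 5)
Your proof is correct and matches the paper's approach: the paper simply states that the corollary ``can be deduced from Theorem \ref{t:filling} and Proposition \ref{p:iflats},'' and you have filled in exactly the expected deduction (rescale to get CAT$(-1)$ when the codimension condition of Theorem \ref{t:filling}(\ref{fill3}) holds, apply the metric Cartan--Hadamard theorem to the lifted metric on the compact space's universal cover, and defer the isolated flats verification to Proposition \ref{p:iflats}). Your informal sketch of why isolated flats should hold is a reasonable heuristic; the paper's actual proof of Proposition \ref{p:iflats} is somewhat cleaner, observing that any path between two distinct flats must pass through $\overline{M}'$ and hence has length at least $2+\lambda$, giving a uniform lower bound used directly to verify the Hruska--Kleiner isolation condition.
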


Thus Theorem \ref{t:filling} says that given a cusped hyperbolic
$(n+1)$--manifold with $n\geq 2$,
there are many ways to fill its cusps to get a space whose universal
cover is CAT$(-\kappa)$ for $\kappa>0$.  If $n\geq 3$, there are many
ways to fill its cusps to get a space whose universal cover is 
CAT$(0)$ with isolated flats.  
The following proposition says
that different fillings generally have different fundamental groups:
\begin{proposition}\label{p:different}
Let $n\geq 2$ and assume the condition \ref{standing}.

For each $i\in \N$ choose a
filling $M_i = M(T_1^i,\ldots,T_m^i)$ of $M$, and suppose that the fillings
$M_i$ satisfy:
For each 
$1\leq j\leq m$ and each element $\gamma$ of
$\pi_1(E_j)\setminus\{1\}$, the set  
$\{i\mid \gamma\in \pi_1(T_j^i)\}$
is finite.  Then for any $k$, there are only finitely many $i\in \N$
so that $\pi_1(M_i)\cong \pi_1(M_k)$. 
\end{proposition}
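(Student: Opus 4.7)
The plan is to argue by contradiction. Set $G:=\pi_1(M)$, $P_j:=\pi_1(E_j)$, $K_j^i:=\pi_1(T_j^i)$, $G_i:=\pi_1(M_i)$; let $\pi_i\co G\twoheadrightarrow G_i$ be the natural quotient and $N_i:=\ker\pi_i=\langle\langle K_1^i,\ldots,K_m^i\rangle\rangle$. Suppose for contradiction that $G_i\cong G_k$ for every $i$ in some infinite $S\subset\N$; fix isomorphisms $\phi_i\co G_i\to G_k$, and set $\psi_i:=\phi_i\circ\pi_i$.

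First I would invoke the group-theoretic Dehn filling theorem of Groves--Manning and Osin: there is a finite set $B\subset G\setminus\{1\}$ such that whenever each $K_j^i$ avoids $B$, the map $\pi_i$ restricts to an injection $P_j/K_j^i\hookrightarrow G_i$ with $N_i\cap P_j=K_j^i$. The going-to-infinity hypothesis guarantees this condition for all sufficiently large $i$; hence $N_i$ determines the tuple $(K_j^i)_j$ by intersection with the peripherals, and the same hypothesis forbids any fixed tuple from recurring infinitely often, so the $N_i$'s are pairwise distinct for $i$ large.

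Next, by Corollary~\ref{c:cwif} each $G_i$ is CAT$(0)$ with isolated flats, so as a toral relatively hyperbolic group its peripheral subgroups are the maximal flat stabilizers, which are canonical up to conjugacy. Any isomorphism $\phi_i$ therefore permutes the conjugacy classes of peripheral subgroups; after passing to a further subsequence this permutation $\sigma\in S_m$ is independent of $i$, and $\psi_i(P_j)$ is conjugate in $G_k$ to $Q_{\sigma(j)}^k:=P_{\sigma(j)}/K_{\sigma(j)}^k$ for every $i,j$.

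Finally, I would combine two rigidity inputs. Mostow--Prasad rigidity (applicable since $n+1\ge 3$ and $M$ has finite volume) gives $\mathrm{Out}(G)$ finite, so $\mathrm{Aut}(G)$-orbits on normal subgroups of $G$ are finite. A shortening/Bestvina--Paulin rigidity argument for the toral relatively hyperbolic target $G_k$, restricted to surjections of the ``filling type'' identified above, then bounds the number of such surjections modulo $\mathrm{Aut}(G)\times\mathrm{Aut}(G_k)$, and in particular bounds the number of possible kernels $N\lhd G$ with $G/N\cong G_k$ coming from fillings. This contradicts the existence of infinitely many pairwise distinct $N_i$. The principal obstacle lies in this last rigidity input: $\mathrm{Hom}(G,G_k)$ is generally infinite, so one must exploit both the malnormality of peripherals in $G_k$ and the Mostow rigidity of the cusped lattice $G$ to obtain the requisite finiteness of surjections modulo automorphisms.
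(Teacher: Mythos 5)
Your proposal takes a genuinely different (and much more elaborate) route than the paper, but it has a real gap exactly at the step you yourself flag as ``the principal obstacle.'' The intermediate observations --- that the group-theoretic Dehn filling theorem forces $N_i\cap P_j = K_j^i$ for large $i$ (hence the kernels $N_i$ are eventually pairwise distinct), and that an isomorphism between toral CAT$(0)$-with-isolated-flats groups must permute peripheral conjugacy classes --- are both correct, but they are not used in the paper's argument and do not by themselves lead to the conclusion. The problem is the finiteness assertion you then lean on: you write that Mostow--Prasad rigidity gives $\mathrm{Out}(\Gamma)$ finite ``so $\mathrm{Aut}(\Gamma)$-orbits on normal subgroups of $\Gamma$ are finite.'' That implication is a non-sequitur: finiteness of $\mathrm{Out}(\Gamma)$ says nothing about how many $\mathrm{Aut}(\Gamma)$-orbits the lattice of normal subgroups has. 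The quantity you actually need bounded --- the number of distinct filling kernels $N\lhd\Gamma$ with $\Gamma/N\cong G_k$ --- is precisely what requires proof, and ``a shortening/Bestvina--Paulin rigidity argument'' is a correct intuition for how to get it, but it is left unexecuted and is exactly where all the work lies.

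The paper's proof supplies this missing piece directly, bypassing the peripheral-permutation and $\mathrm{Out}$-finiteness steps. After passing to a subsequence so that all $\pi_1(M_i)$ are isomorphic to a fixed relatively hyperbolic group $G$ (relatively hyperbolic with free abelian peripherals via Corollary~\ref{c:cwif} and \cite{HK}), one has surjections $\phi_i\co \Gamma\to G$ whose filling kernels contain no short elements. Corollary 9.7 of \cite{GM} then gives triviality of the stable kernel $\underrightarrow{\ker}\{\phi_i\}$. This is where the Bestvina--Paulin/shortening machinery enters in packaged form: Groves's Proposition 2.11 of \cite{Gr} says that, given such a sequence with trivial stable kernel into a toral relatively hyperbolic target, $\Gamma$ must be abelian or split over an abelian subgroup. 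Finally, $\Gamma=\pi_1(M)$ is a finite-volume hyperbolic lattice in dimension $\geq 3$, which does not split over a virtually abelian subgroup (\cite[Theorem 1.3(i)]{Be07}); this produces the contradiction. In short, your strategy is recognizably aimed at the same target (a Rips--Sela/Bestvina--Paulin finiteness), but the paper closes the gap by combining the stable-kernel theorem with Groves's splitting result and Belegradek's non-splitting, whereas your sketch stops short at exactly that point, and additionally asserts an incorrect consequence of $\mathrm{Out}(\Gamma)$ being finite.
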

\begin{proof}
We argue by contradiction.
The hypotheses imply that all but finitely many fillings $M_i$ are
$2\pi$--fillings.  By passing to a subsequence we may suppose that all
the $\pi_1(M_i)$ are isomorphic to some fixed $G$.
Applying Corollary \ref{c:cwif} and 
\cite[Theorem 1.2.1]{HK},
$G$ is relatively hyperbolic with respect to a collection of
free abelian subgroups.  If $\Gamma = \pi_1(M)$ we have a sequence of
surjections 
\[ \Gamma \stackrel{\phi_i}{\longrightarrow}G \]
obtained by killing subgroups $K_j^i$
of the peripheral subgroups of $\Gamma$
corresponding to the tori $T_j^i$.  Since the injectivity radii of
these tori are going to infinity, the length (with respect to a fixed
generating set for $\Gamma$)
of a shortest nontrivial element of $\cup_jK_j^i$ is also going to
infinity.
It follows from Corollary 9.7 of \cite{GM} that
the \emph{stable kernel} 
\[ \underrightarrow{\ker}\{ \phi_i \} = \{ \gamma \in \Gamma \mid
\phi_i(\gamma)=1\mbox{ for almost all }\phi_i\}\]
is trivial.  Work of Groves then shows that $\Gamma$
is either abelian or splits over an abelian group \cite[Proposition
  2.11]{Gr}.  
But $\Gamma$ is
the fundamental group of a finite volume hyperbolic manifold of
dimension at least $3$, so it cannot split over a virtually abelian
subgroup (see, e.g. \cite[Theorem 1.3.(i)]{Be07}).
\end{proof}

  We will show 
(Corollary
\ref{c:notpdn}) that no $2\pi$--filling of an $(n+1)$--dimensional hyperbolic
manifold 
with filling cores of dimension $<n$ 
has the same fundamental group as a closed hyperbolic
manifold (or even a closed aspherical manifold) for $n\geq 3$.  This is in
sharp contrast to the situation for hyperbolic $3$--manifolds.
Combining Corollary \ref{c:notpdn} and Proposition \ref{p:different}
immediately yields the following:
\begin{theorem}\label{t:examples}
Let $n\geq 3$ and assume the condition \ref{standing}.
There are infinitely many $2\pi$--fillings of $M$ whose
fundamental groups are non-pairwise-isomorphic, torsion-free,
 $(n+1)$--dimensional hyperbolic groups, each of which acts
geometrically on some $(n+1)$--dimensional CAT$(-1)$ space, and none of
which is a closed aspherical manifold group.

If $n\geq 4$, there are infinitely many $2\pi$--fillings of
$M$ whose fundamental groups are non-pairwise-isomorphic,
torsion-free,  $(n+1)$--dimensional relatively hyperbolic groups, each of which
acts geometrically on some $(n+1)$--dimensional CAT$(0)$ space with
isolated flats, and none of which is a closed aspherical manifold
group.  Flats of any dimension up to $(n-2)$ occur in infinitely many
such fillings.
\end{theorem}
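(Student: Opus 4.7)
The plan is to invoke three previously established results: Corollary \ref{c:cwif} (a geometric action on a CAT$(-1)$ or CAT$(0)$-with-isolated-flats space), Proposition \ref{p:different} (infinitely many pairwise non-isomorphic fundamental groups along suitable sequences of fillings), and Corollary \ref{c:notpdn} (no filled group is a closed aspherical manifold group when the filling cores have small enough dimension). The theorem will follow as soon as I exhibit a sequence of $2\pi$-fillings to which all three apply.

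First I build the fillings. For each boundary torus $N_j$ and each target subtorus dimension $k_j \in \{1,\ldots,n\}$ that I wish to realize, I construct a sequence $\{T_j^i\}_{i\in\N}$ of embedded totally geodesic $k_j$-subtori of $N_j$ whose systoles tend to infinity with $i$, and such that each nontrivial $\gamma \in \pi_1(N_j)$ lies in $\pi_1(T_j^i)$ for only finitely many $i$. This is a soft linear-algebra construction: identifying $\pi_1(N_j) \cong \Z^n$, the $k_j$-subtori of $N_j$ correspond to rank-$k_j$ direct summands, the summands containing any fixed $\gamma \ne 0$ cut out proper subvarieties of the rational Grassmannian, and a diagonal enumeration avoids any countable collection of such subvarieties while pushing systoles to infinity. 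For $i$ large each $T_j^i$ has systole exceeding $2\pi$, so $M_i := M(T_1^i,\ldots,T_m^i)$ is a $2\pi$-filling in the sense of Definition \ref{d:filling}.

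For the first half of the theorem ($n \geq 3$) I set $k_j \in \{n-1,n\}$ for every $j$. By Corollary \ref{c:cwif}, the group $G_i := \pi_1(M_i)$ acts geometrically on the universal cover $\widetilde{M_i}$, which is CAT$(-1)$ of topological dimension $n+1$; hence $G_i$ is an $(n+1)$-dimensional Gromov-hyperbolic group. The deck action is free, and any finite-order isometry of the CAT$(0)$ space $\widetilde{M_i}$ must have a fixed point, so $G_i$ is torsion-free. Proposition \ref{p:different} now supplies an infinite subsequence of pairwise non-isomorphic $G_i$, and because every filling core has dimension $n - k_j \in \{0,1\}$, strictly less than $n$, Corollary \ref{c:notpdn} prevents any $G_i$ from being a closed aspherical manifold group. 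For the second half ($n \geq 4$) I repeat the argument but allow $k_j \leq n-2$, obtaining $(n+1)$-dimensional relatively hyperbolic CAT$(0)$-with-isolated-flats groups. To realize a prescribed flat dimension $d \in \{2,\ldots,n-2\}$ in infinitely many such fillings, I single out one cusp $j_0$ and set $k_{j_0} = n-d$, so that for every $i$ the peripheral subgroup $\pi_1(V(N_{j_0},T_{j_0}^i)) \cong \Z^d$ acts cocompactly on a $d$-dimensional Euclidean flat in $\widetilde{M_i}$.

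The main obstacle is bookkeeping rather than mathematics: the substantive content has been concentrated into the three cited results. The only step requiring genuine care is the construction of the subtori $T_j^i$, which must simultaneously satisfy the avoidance condition of Proposition \ref{p:different}, the systole-growth condition needed for the $2\pi$-filling hypothesis, and the prescribed choice of $k_j$; but once one views rank-$k_j$ summands of $\Z^n$ as $\Z$-points of the appropriate Grassmannian, this is an elementary diagonal argument.
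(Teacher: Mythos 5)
Your proposal follows the paper's route: the paper proves Theorem \ref{t:examples} simply by combining Corollary \ref{c:cwif}, Proposition \ref{p:different}, and Corollary \ref{c:notpdn}, calling the combination ``immediate,'' and you have supplied exactly the details it leaves implicit --- namely the construction of the sequence of totally geodesic tori $T_j^i$ (realizing the prescribed dimensions $k_j$, with systoles tending to infinity, and with each nontrivial $\gamma\in\pi_1(N_j)$ lying in only finitely many $\pi_1(T_j^i)$) so that Proposition \ref{p:different} and the $2\pi$-hypothesis both apply. Your torsion-freeness argument (free deck action on the CAT$(0)$ universal cover) and the identification of the $d$-dimensional flats sitting over the filling cores are both fine.

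One small slip, echoing the paper's own imprecise lead-in sentence: when you write ``because every filling core has dimension $n-k_j\in\{0,1\}$, strictly less than $n$, Corollary \ref{c:notpdn} prevents any $G_i$ from being a closed aspherical manifold group,'' the clause ``strictly less than $n$'' is vacuous --- since $\dim T_j\geq 1$ always, every filling core has dimension $\leq n-1<n$. Corollary \ref{c:notpdn} actually says $G$ is a Poincar\'e duality group if and only if \emph{every} $T_i$ is one-dimensional, so what blocks PD in your first case is that $k_j\in\{n-1,n\}$ together with $n\geq 3$ forces $\dim T_j\geq 2\neq 1$; and in the second case ($n\geq 4$) it is the distinguished torus with $\dim T_{j_0}=n-d\geq 2\neq 1$. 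Your choices do satisfy this, so the conclusion stands; only the stated justification needs to be rephrased in terms of the dimensions of the $T_j$ rather than of the filling cores.
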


In \cite{JS2}, Januszkiewicz and {\'S}wi{\c{a}}tkowski introduce
\emph{systolic groups}, in part as a means of giving examples of 
non-manifold $n$--dimensional
hyperbolic groups for all $n$ (see \cite{CD,JS1} for other
approaches).  Recently, Osajda has shown that 
systolic groups are never simply connected at infinity
\cite{Osa}.  It follows from our analysis of the group boundary in
Section \ref{s:visbound} that the high-dimensional groups obtained in
Theorem \ref{t:examples} are simply connected at infinity whenever
no filling cores are points.  In particular, these groups are not
systolic (Corollary \ref{c:notsys}).

\section{Preliminaries}\label{s:prelim}
For general results and terminology for CAT$(\kappa)$ spaces we refer
to Bridson and Haefliger \cite{BrHa}.  For convenience, we remind the
reader of the definition.  Let $\kappa\in \R$, and let $S_\kappa$ be
the complete simply connected surface of constant curvature $\kappa$.
Let $R_\kappa\le \infty$ be the diameter of $S_\kappa$.  Now let $X$ be a
length space.  (See \cite{BrHa,BBI} for
more on length spaces.)
If $\Delta\subset X$ is
a geodesic triangle in $X$ whose perimeter is bounded above by
$2R_\kappa$, with corners $x$, $y$, and $z$, then there is a
\emph{comparison triangle} $\bar{\Delta}\subset S_\kappa$; i.e., the
corners of $\bar{\Delta}$ can be labelled $\bar{x}$, $\bar{y}$, and
$\bar{z}$, and there is a map from $\Delta$ to $\bar{\Delta}$ which is
an isometry restricted to any edge of $\Delta$, taking $x$ to $\bar{x}$, and so
on (See Figure \ref{f:compare}).  
\begin{figure}[htbp]\label{f:compare}
  \begin{center}
    \input{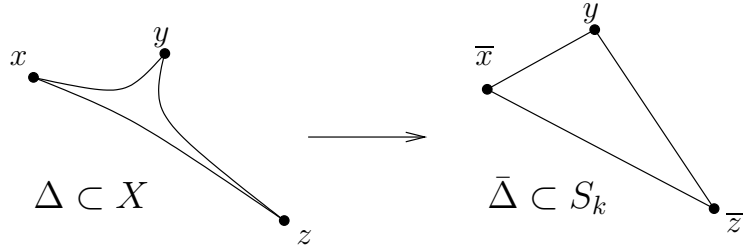}
  \caption{Comparison for CAT$(k)$.}
  \end{center}
\end{figure}
If this map never decreases distance on any triangle $\Delta$
whose perimeter is bounded above by $2R_\kappa$, we say that $X$ is
\emph{CAT$(\kappa)$}.  Put informally, $X$ is CAT$(\kappa)$ if its
triangles are ``no fatter'' than the comparison triangles in $S_\kappa$.
If every point in $X$ has a neighborhood which is CAT$(\kappa)$, then
we say that $X$ is \emph{locally CAT$(\kappa)$}.  If $X$ is locally
CAT$(0)$, then we say that $X$ is \emph{nonpositively curved}.  For
$\kappa\leq 0$, the
universal cover of a complete, locally CAT$(\kappa)$ space is
CAT$(\kappa)$, by the Cartan-Hadamard Theorem.

\subsection{Warped products}\label{ss:wp}
We take this definition from \cite[3.6.4]{BBI}.
\begin{definition}
  Given two complete length spaces $(X,d_X)$ and $(Y,d_Y)$, and a continuous
  nonnegative  
  $f\co X\to [0,\infty)$, we can define the \emph{warped product of $X$
  with $Y$, with warping function $f$} to be the length space based on the
  following method of measuring lengths of paths.
  Given any closed interval $[a,b]$ and any lipschitz path $\gamma\co I\to
  X\times Y$, with $\gamma(t) = (\gamma_1(t),\gamma_2(t))$, the
  quantities $|\gamma_1'(t)|$ and $|\gamma_2'(t)|$ are well defined
  almost everywhere on $I$ (see \cite[2.7.6]{BBI}).  Define the length
  of $\gamma$, $l(\gamma)$ to be
\[ l(\gamma) = \int_a^b\sqrt{|\gamma_1'(t)|^2+f^2(\gamma_1(t))|\gamma_2'(t)|^2}dt .\]
\end{definition}

The quotient map
$X\times Y\to X\times_f Y$ is injective exactly when $f$ has no
zeroes.  If $f(x)=0$ then $\{x\}\times Y$ is identified to a point in
$X\times_fY$.

Often we will have $X \subseteq \R$, or $X = I\times_g Z$ for
$I\subseteq \R$, and $f$ depending only on $r\in I$.  In this case, we
will usually write $X\times_f Y$ as $X\times_{f(r)}Y$.  
For example:
\begin{itemize}
\item The euclidean plane is isometric to $[0,\infty)\times_r S^1$.
  The subset $\{R\}\times S^1$ is a circle of radius $R$.
\item The hyperbolic space $\H^{n+1}$ is isometric to
  $\R\times_{e^r}\E^n$.  The subset $\{R\}\times \E^n$ is a
  horosphere. 
\end{itemize}

We will often abuse notation by using
points in $X\times Y$ to
refer to points in $X\times_f Y$, even when the quotient map is not
injective.  For example the cone point of the Euclidean cone 
$[0,\infty)\times_r  X$ might be referred to as $(0,x)$ for some $x\in
  X$, or just as $(0,-)$.

In case the metrics $d_X$ and $d_Y$ are induced by Riemannian metrics
$g_X$ and $g_Y$ and $f\co X\to \R_{>0}$ is smooth, the warped product
metric on $X\times_f Y$ is equal 
to the path metric induced by a Riemannian metric $g$, given 
at a point $(x,y)$ by the formula 
\[ g = g_X + f^2(x) g_Y. \]

Curvature bounds on warped product spaces are studied by Alexander and
Bishop in
\cite{AB}, where quite general sufficient conditions
are given for a warped product to have curvature bounded from above or
below.
In order to state the results from \cite{AB} which we need, we first
recall some definitions.
\begin{definition}
Let $K \in \R $.
Let $\mathcal{F}K$ be the set of solutions to the differential
equation $g'' + Kg = 0$.  A  function $f$ 
satisfying $f \leq g$ for any function $g$ in $\mathcal{F}K$
agreeing with $f$ at two points which are sufficiently close together
is said to \emph{satisfy the differential inequality $f''+Kf\geq 0$ in
  the barrier sense.}
\end{definition}
\begin{definition}
Let $K\in \R$, and let $B$ be a geodesic space.  A continuous function
$f\co B\to \R$ is \emph{$\mathcal{F}K$--convex} if its composition with
any unit-speed geodesic satisfies the differential inequality
$f''+Kf\geq 0$ in the barrier sense.
\end{definition}
We collect here some useful facts about $\mathcal{F}K$--convexity:
\begin{useful}\label{convex}{\ }
\begin{enumerate}
\item A function is $\mathcal{F}0$--convex if and only if it is
  \emph{convex} in the usual sense.
\item\label{c-fort} A non-negative $\mathcal{F}K$--convex function is also
  $\mathcal{F}K'$--convex for any $K'\geq K$.
\item \cite[Theorem 1.1(1A)]{AB2} If $B$ is CAT$(-1)$ and $x\in B$,
  then $\cosh(d(\cdot,x))$ is a $\mathcal{F}(-1)$--convex function on $B$.
\item\label{c-dist} \cite[Theorem 1.1(3A)]{AB2} If $A$ is a convex subset of a
  CAT$(0)$ space $B$, then $d(\cdot,A)$ is a convex function on $B$.
\item\label{c-sinh} \cite[Theorem 1.1(4A)]{AB2} If $A$ is a convex subset of a CAT$(-1)$ space $B$,
  then $\sinh(d(\cdot,A))$ is a $\mathcal{F}(-1)$--convex function on
  $B$.
\end{enumerate}
\end{useful}

We now state a special case of a more general result in
\cite{AB}.  

\begin{theorem}\label{t:AB}\cite[Theorem 1.1]{AB}
Let $K\leq 0$, and let $K_F\in \R$.
Let $B$ and $F$ be complete locally compact CAT$(K)$ and CAT$(K_F)$ spaces,
respectively. 
Let $f\co B\to [0,\infty)$ be $\mathcal{F}K$--convex.  Let $X =  f^{-1}(0)$.

Suppose further that either 
\begin{enumerate}
\item $X=\emptyset$ and $K_F\leq K(\inf f)^2$, or
\item $X$ is nonempty and $(f\circ\sigma)'(0^+)^2 \geq K_F$ whenever
  $\sigma\co [0,x]\to B$ is a shortest (among all paths starting in
  $X$) unit-speed geodesic from $X$ to a point $b\in B\smallsetminus X$. 
\end{enumerate}
Then the warped product $B\times_f F$ is CAT$(K)$.
\end{theorem}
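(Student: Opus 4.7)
The plan is to verify the CAT$(K)$ inequality on $B\times_f F$ by an Alexandrov triangle comparison, reducing to a one-dimensional warped-product model erected over a base geodesic. As a first step, I would check that $B\times_f F$ is a complete, locally compact geodesic space, so that CAT$(K)$ can be tested via the standard Alexandrov triangle condition on triangles of perimeter at most $2R_K$. I would then record the basic structure of minimizing geodesics in a warped product: if $\gamma(t)=(\alpha(t),\beta(t))$ is a unit-speed minimizer with $f\circ\alpha>0$, then $\alpha$ is (up to reparametrization) a base geodesic, $\beta$ is an $F$-geodesic, and $f^{2}(\alpha)|\beta'|$ is a constant of motion. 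In particular the horizontal speed is governed by an ODE on the base whose coefficient is $f\circ\alpha$.

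Next I would prove the main comparison inequality. Given a geodesic triangle $\triangle$ in $B\times_f F$, consider a side $\sigma\co[0,L]\to B\times_f F$ and the opposite vertex $p$; one wishes to compare $d(p,\sigma(t))$ with the corresponding distance in a Euclidean/model triangle in $S_K$. The idea is to lift the problem to a one-dimensional warped product $J\times_h F$, where $J\subset\R$ parameterizes a base geodesic in $B$ and $h$ is chosen to be the solution of $h''+Kh=0$ matching $f\circ\alpha$ at the appropriate endpoints. CAT$(K)$ for $B$ gives the usual comparison for base distances; the $\mathcal{F}K$-convexity of $f$ then gives $f\circ\alpha\leq h$ along the geodesic; and integrating the length formula $\sqrt{|\alpha'|^2 + f^2(\alpha)|\beta'|^2}$ against these two bounds yields the desired CAT$(K)$ comparison. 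In the case $X=\emptyset$, the hypothesis $K_F\leq K(\inf f)^2$ enters precisely at the step where one needs the rescaled fibers to be no fatter than their $S_K$ comparison models.

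The main obstacle, and what forces the second hypothesis, is the singular case $X\neq\emptyset$. Here entire fibers $\{x\}\times F$ with $x\in X$ collapse to points of $B\times_f F$, and minimizing geodesics may run into or through $X$. Near such a crossing, $B\times_f F$ looks locally like a warped cone on a fiber, and the boundary tangential condition $(f\circ\sigma)'(0^{+})^{2}\geq K_F$ is precisely what guarantees that the link at a point of $X$ is CAT$(1)$ with diameter $\leq \pi$ in the Berestovskii cone sense. I would handle triangles meeting $X$ by an approximation argument: perturb to nearby triangles whose sides avoid $X$ except possibly at endpoints, apply the nonsingular comparison above, and take a limit, using the tangent-cone estimate to control the contribution of segments emerging from $X$.

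The hardest step is the singular analysis near $X$, where the interplay between the barrier derivative condition on $f$ and the curvature of the fibers must be made quantitative; the technical heart is showing that along any geodesic $\sigma$ approaching $X$, the derivative condition on $f$ forces the "angular contribution" of the fibers to be dominated by a CAT$(K_F)$ comparison, so that no excess curvature concentrates along the collapsed locus and the one-dimensional comparison from the nonsingular case extends continuously across $X$.
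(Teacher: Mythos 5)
This theorem is stated in the paper purely as a citation of Alexander and Bishop's work (\cite[Theorem 1.1]{AB}); the paper provides no proof, using the result as a black box. There is therefore nothing in the paper to compare your argument against. That said, a few comments on your sketch on its own terms: the general scheme you describe — establishing completeness and the geodesic-space structure, recording the constant of motion $f^2(\alpha)\,|\beta'|$ along minimizers, reducing to a one-dimensional warped-product comparison model $J\times_h F$ with $h''+Kh=0$, and treating the degenerate locus $X$ by approximation — is indeed in the spirit of what Alexander and Bishop do, so the high-level outline is reasonable. However, the part you correctly identify as the hard step (the singular analysis near $X$) is only gestured at. Your framing of hypothesis (2) as ensuring ``the link at a point of $X$ is CAT$(1)$ with diameter $\leq\pi$'' is not quite accurate: by Proposition \ref{p:dirwarp}, the space of directions at a singular point $(p,\phi)$ with $f(p)=0$ is $\Sigma_p(B)\times_{Df_p}F$, and the condition $(f\circ\sigma)'(0^+)^2\geq K_F$ controls the infinitesimal warping $Df_p$ so that this directional space, after rescaling, is CAT$(1)$ — but one also needs to propagate this from an infinitesimal to a macroscopic comparison, and it is precisely that propagation (not a generic ``perturb-and-take-limits'' argument) that occupies most of Alexander--Bishop's proof. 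So as a sketch of the idea your proposal is serviceable, but it does not constitute a proof and the paper under review does not claim one either.
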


\subsection{The space of directions and the tangent cone}
Suppose that $X$ is a length space, and $p\in X$.  Recall that the
\emph{space of directions at $p$} is the collection of geodesic
segments issuing from $p$, modulo the equivalence relation:
$[p,x]\sim[p,y]$ if the Alexandroff angle between $[p,x]$ and $[p,y]$
at $p$ is zero.  The Alexandroff angle at $p$ makes the space of
directions, $\Sigma_p(X)$, into a metric space.  If $X$ is a Riemannian
manifold, then $\Sigma_p(X)$ is always
a round sphere of diameter $\pi$.
The \emph{tangent cone at $p$} is the Euclidean cone on the space of
directions at $p$:
\[ C_p(X)\cong [0,\infty)\times_r\Sigma_p(X) \]
We will sometimes just write $\Sigma_p$ and $C_p$ if the space $X$ is
understood.  If $X$ is a Riemannian manifold near the point $p$, then
the tangent cone $C_p$ can be canonically identified with the tangent
space of $X$ at $p$.

In our setting $X$ will always be geodesically complete and locally
compact.  With these assumptions, if $U$ is any open neighborhood of
$p\in X$, then $C_p(X)$ is isometric to the pointed Gromov-Hausdorff
limit of spaces $U_n$ isometric to $U$ with the metric multiplied by
factors $R_n$ going to infinity (see for example \cite[Theorem
  9.1.48]{BBI}).

Alexander and Bishop \cite[page 1147]{AB} observe that the space of
directions at a point in a warped product is determined by the spaces
of directions in the factors together with infinitesimal information
about the warping functions.  If $f\co B\to \R$ is a convex function
and $p\in \R$, then $f$ has a well-defined ``derivative'' $Df_p\co
C_p(B)\to \R$ at the point $p$; this derivative is linear on each ray.

\begin{definition}[spherical join, cf. {\cite[I.5]{BrHa}}]\label{d:sphjoin}
Let $A$ and $B$ be metric spaces.
We may give the product $[0,\frac{\pi}{2}]\times A\times B$ a
pseudometric as follows.  Let $x_1 = (\phi_1,a_1,b_1)$ and $x_2 =
(\phi_2,a_2,b_2)$.  The distance $d(x_1,x_2)$ is given by
\begin{eqnarray*}
\cos \left(d(x_1,x_2)\right) &  = & 
\cos(\phi_1)\cos(\phi_2)\cos\left(d_\pi(a_1,a_2)\right) \\ & & +
\nonumber \sin(\phi_1)\sin(\phi_2)\cos\left(d_\pi(b_1,b_2)\right),
\end{eqnarray*}
where $d_\pi(p_1,p_2)$ means $\min\{\pi,d(p_1,p_2)\}$.
The \emph{spherical join} $A\join B$ of $A$ and $B$ is the canonical
metric space quotient of $[0,\frac{\pi}{2}]\times A\times B$ with the
above metric.
Points in $A\join B$ can be written as
triples $(\phi,a,b)$ with $\phi\in [0,\frac{\pi}{2}]$, $a\in
A$ and $b\in B$.  
If $\phi = 0$, the third coordinate can be
ignored, 
and if $\phi=\frac{\pi}{2}$, the second coordinate can be
ignored. 
\end{definition}

\begin{proposition}\cite{AB}\label{p:dirwarp}
Let $B$, $F$, and $f$ be as in Theorem \ref{t:AB}.  Let $p\in B$, and
let $\phi\in F$.  If $f(p)>0$, then there are isometries
\[ C_{(p,\phi)}(B\times_f F) \cong C_p(B)\times C_\phi(F) \]
and 
\[ \Sigma_{(p,\phi)}(B\times_f F)\cong
\Sigma_p(B)\join\Sigma_\phi(F).\]
If $f(p)=0$, there are isometries
\[ C_{(p,\phi)}(B\times_f F) \cong C_p(B)\times_{Df_p} F \]
and
\[ \Sigma_{(p,\phi)}(B\times_f F)\cong \Sigma_p(B)\times_{Df_p} F.\]
\end{proposition}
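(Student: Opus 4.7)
The plan is to compute the tangent cone $C_{(p,\phi)}$ directly as a pointed Gromov--Hausdorff limit, using the characterization recalled from \cite{BBI} in the preceding paragraphs, and then extract $\Sigma_{(p,\phi)}$ as the cross-section of the resulting Euclidean cone structure. The key observation is that rescaling the distance function on $B\times_f F$ by a factor $R$ corresponds to replacing the Riemannian data $g_B+f^2 g_F$ by $R^2 g_B+(Rf)^2 g_F$; hence the rescaled space is itself a warped product, namely $(B,R^2 g_B)\times_{Rf}F$. By definition of the tangent cone, the base $(B,R^2g_B,p)$ converges to $C_p(B)$ as $R\to\infty$, so everything hinges on understanding the limiting behavior of the warping function $Rf$ on shrinking neighborhoods of $p$.

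If $f(p)=c>0$, then for $q$ in a $B$-neighborhood of $p$ of radius $O(1/R)$ we have $f(q)=c(1+o(1))$, hence $Rf(q)/(Rc)\to 1$ uniformly. The $F$-factor inherits the metric $(Rc)^2 g_F$, whose pointed GH limit at $\phi$ is $C_\phi(F)$. Because the warping function tends to a constant in the rescaled limit, the warped structure degenerates to a metric product, yielding $C_{(p,\phi)}(B\times_f F)\cong C_p(B)\times C_\phi(F)$. The space of directions at the apex of a product of Euclidean cones is the spherical join of the cross-sections (a direct consequence of Definition~\ref{d:sphjoin}; cf.\ \cite[\S I.5]{BrHa}), giving $\Sigma_{(p,\phi)}\cong\Sigma_p(B)\join\Sigma_\phi(F)$.

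If $f(p)=0$, the non-negative $\mathcal{F}K$-convex function $f$ admits a well-defined, ray-linear directional derivative $Df_p\co C_p(B)\to\R$, so that $f(q)=s\cdot Df_p(v)+o(s)$ for $q$ at $B$-distance $s$ from $p$ along the direction $v\in\Sigma_p(B)$. A point at rescaled distance $\tilde s=Rs$ therefore has warping value $Rf(q)=Df_p(\tilde s\,v)+o(1)$, so $Rf$ converges to $Df_p$ viewed as a function on $C_p(B)$. Since $Rf$ remains bounded on bounded subsets, the $F$-factor is not itself rescaled, and taking pointed GH limits yields $C_{(p,\phi)}(B\times_f F)\cong C_p(B)\times_{Df_p}F$. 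Writing $C_p(B)=[0,\infty)\times_r\Sigma_p(B)$ and using the linearity of $Df_p$ along rays, so that $Df_p(r,\sigma)=r\cdot(Df_p|_{\Sigma_p(B)})(\sigma)$, a direct unpacking of the warped length integral identifies $C_p(B)\times_{Df_p}F$ with the Euclidean cone $[0,\infty)\times_r(\Sigma_p(B)\times_{Df_p}F)$, whence $\Sigma_{(p,\phi)}\cong\Sigma_p(B)\times_{Df_p}F$.

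The main obstacle is to make the pointed Gromov--Hausdorff convergence of the rescaled warped products rigorous, i.e.\ to verify that the sequence of length structures converges uniformly on compacta. This should follow by estimating the warped length integrals termwise, using the uniform control on $Rf$ established above, but one must be careful at the singular locus $f^{-1}(0)\times F$; in that regime, even the existence of a linear $Df_p$ on $C_p(B)$ requires invoking standard convexity theory for non-negative $\mathcal{F}K$-convex functions on a CAT$(K)$ space, and it is there that the hypotheses of Theorem~\ref{t:AB} on $B$ are used essentially.
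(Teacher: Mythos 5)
The paper does not prove Proposition \ref{p:dirwarp}; it is cited verbatim from Alexander and Bishop \cite[page 1147]{AB}, and the discussion preceding the statement glosses their argument as a direct identification of the space of directions from the factors' spaces of directions together with first-order data about $f$. Your blow-up argument via pointed Gromov--Hausdorff limits of the rescaled warped products $(B, R\,d_B)\times_{Rf}F$ is a reasonable independent strategy, and the final step --- factoring $C_p(B)\times_{Df_p}F$ as $[0,\infty)\times_r\bigl(\Sigma_p(B)\times_{Df_p|_{\Sigma_p}}F\bigr)$ using the ray-linearity of $Df_p$ --- is correct and is exactly how one passes from the tangent-cone statement to the space-of-directions statement.

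However, the argument as written has genuine gaps. The hypotheses of Theorem \ref{t:AB} only give that $B$ and $F$ are CAT$(K)$ and CAT$(K_F)$ length spaces, not Riemannian manifolds, so the tensor notation $g_B+f^2 g_F$ should be replaced by the warped length functional of Section \ref{ss:wp} throughout. In the case $f(p)>0$, the phrase ``the warped structure degenerates to a metric product'' glosses over the key estimate: $Rf$ diverges, and what one actually needs is the uniform multiplicative bound $Rf(q)/(Rc)=f(q)/c=1+O(1/R)$ on rescaled balls of fixed radius, which gives a $(1+O(1/R))$-bi-Lipschitz comparison to the honest product $(B,Rd_B)\times(F,Rc\,d_F)$ and hence identifies the limit with $C_p(B)\times C_\phi(F)$. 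In the case $f(p)=0$, you must actually establish, not merely invoke, that the one-sided directional derivative $Df_p$ exists and is continuous on $\Sigma_p(B)$, and that the remainder in $f(q)=s\cdot Df_p(v)+o(s)$ is $o(s)$ \emph{uniformly} over directions $v$ in compacta; without uniformity, the termwise convergence of the rescaled warped length integrals does not follow, and this is precisely where the $\mathcal{F}K$-convexity of $f$ and the CAT$(K)$ hypothesis on $B$ must be used. You flag this obstacle yourself in the final paragraph, but as written the argument is a plausible sketch rather than a proof.
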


\subsection{Locally injective logarithms}
Let $X$ be a uniquely geodesic metric space and let $p\in X$.
In general, there is no \emph{exponential map} defined on the entire
tangent cone $C_p$.
However, there is a well-defined \emph{logarithm map} from $X$
to $C_p$ \cite{KL}.
\begin{definition}
Let $X$ be a uniquely geodesic metric space, and let 
$C_p=[0,\infty)\times_r\Sigma_p$ be the tangent
cone at a point $p\in X$.  If $x\in X$, then $\log_p(x)$ is the
equivalence class of $(d(x,p),[p,x])$ in $C_p$.
\end{definition}
If the metric is only locally
uniquely geodesic, then the logarithm map can still be defined in a
neighborhood of each point.  The existence of a local exponential map
is limited by the extent to which the logarithm is injective and
surjective.
\begin{definition}\label{d:linj}
The 
\emph{log-injectivity radius at $p$}, $\linj(p)$, is be the supremum of
those numbers $r$ for which $\log_p$ is well-defined and injective on the ball of
radius $r$ about $p$.  A neighborhood of $p$ on which $\log_p$ is
injective is called a \emph{log-injective neighborhood of $p$}.

Similarly, the \emph{log-surjectivity radius at $p$}, $\lsurj(p)$, is
the supremum of those numbers $r$ for which
$[0,r]\times_r\Sigma_p\subset C_p$ is contained in the image of
$\log_p$.
\end{definition}

In general, neither $\linj(p)$ nor $\lsurj(p)$ is positive.  Recall
that a space has the \emph{geodesic extension property} if any locally
geodesic segment terminating at $y$ can be extended to a longer
locally geodesic segment terminating at some $y'\neq y$.
A complete CAT$(0)$ space satisfying the geodesic extension property
has $\lsurj(p) = \infty$.
\begin{observation}\label{obs:exp}
If $R=\min\{\lsurj(p),\linj(p)\}$, then there is an
\emph{exponential map}
\[\exp_p\co [0,R)\times_r\Sigma_p\to X\]
which is a homeomorphism onto the open $R$--neighborhood of $p$.
More generally there is always an exponential map
\[\exp_p\co {\log_p}(U)\to U\subset X\]
for any log-injective neighborhood
$U$ of $p$, and this map is a homeomorphism onto its image.
\end{observation}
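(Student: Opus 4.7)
The plan is to define $\exp_p$ as the set-theoretic inverse of $\log_p$ restricted to the relevant neighborhood, and then verify it is a homeomorphism.

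First, on any log-injective neighborhood $U$ of $p$, the restriction $\log_p\colon U\to C_p$ is injective by hypothesis, so we can simply set $\exp_p:=(\log_p|_U)^{-1}$ on the image $\log_p(U)$. This is a well-defined bijection $\log_p(U)\to U$. To make it into the claimed homeomorphism, I would establish that $\log_p\colon U\to C_p$ is continuous and is an open map onto its image. Continuity comes from the fact that in a locally uniquely geodesic space (such as the locally CAT$(0)$ spaces of interest here) the geodesic from $p$ to a nearby point $x$ varies continuously with $x$; so both its length $d(p,x)$ and its equivalence class $[p,x]\in\Sigma_p$ depend continuously on $x$, hence so does $\log_p(x)=(d(p,x),[p,x])\in C_p$. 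Openness of $\log_p|_U$ onto its image can be deduced directly: given $x\in U$ and a small radius $\rho$ with $B_\rho(x)\subset U$, the image $\log_p(B_\rho(x))$ contains a cone-neighborhood of $\log_p(x)$ because the geodesic from $p$ varies continuously with its endpoint.

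For the stronger first statement, set $R=\min\{\lsurj(p),\linj(p)\}$ and take $U=B_R(p)$. Since $R\leq\linj(p)$, the map $\log_p$ is injective on $U$, so the previous paragraph applies to give a homeomorphism $\exp_p\colon\log_p(U)\to U$. It remains to identify $\log_p(U)$ with $[0,R)\times_r\Sigma_p$. The inclusion $\log_p(U)\subseteq[0,R)\times_r\Sigma_p$ is immediate from $\log_p(x)=(d(p,x),[p,x])$ and $d(p,x)<R$. Conversely, any $v=(s,[\gamma])\in[0,R)\times_r\Sigma_p$ satisfies $s<R\leq\lsurj(p)$, so by definition of $\lsurj(p)$ the point $v$ lies in the image of $\log_p$; its (necessarily unique, by injectivity) preimage $y$ satisfies $d(p,y)=s<R$, hence $y\in B_R(p)=U$.

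The main obstacle is the continuity of the inverse, i.e., verifying that $\log_p$ is an open map onto its image; this is what prevents the statement from being pure bookkeeping. In the general locally uniquely geodesic setting one needs some control on how geodesics depend on their endpoints, but in the locally CAT$(0)$ and locally CAT$(-1)$ spaces relevant to this paper that dependence is $1$--Lipschitz (in the standard convexity-of-the-distance-function sense), which makes the openness argument routine. Everything else is direct unpacking of the definitions of $\linj(p)$, $\lsurj(p)$, and the warped-product structure on $C_p=[0,\infty)\times_r\Sigma_p$.
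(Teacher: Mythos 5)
The paper records this as an unproved \emph{Observation}, so there is no proof to compare against; your job was to unpack the definitions, and you have done so in essentially the only reasonable way. Your treatment of the first assertion is correct: the inclusion $\log_p(B_R(p))\subseteq[0,R)\times_r\Sigma_p$ is immediate from the formula $\log_p(x)=(d(p,x),[p,x])$, and the reverse inclusion correctly uses $R\leq\lsurj(p)$ to place $v=(s,[\gamma])$ in the image of $\log_p$, noting that the preimage automatically lies in $B_R(p)$ because its radial coordinate equals its distance to $p$.

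The one step that needs tightening is the openness of $\log_p\colon U\to\log_p(U)$, i.e.\ the continuity of $\exp_p$. The reason you give---``the geodesic from $p$ varies continuously with its endpoint''---is exactly the statement that $\log_p$ is continuous; it does not by itself show that $\log_p$ carries open sets to relatively open sets, so as written the paragraph restates continuity and labels it openness. The standard repair, available here because the paper has declared $X$ to be proper (locally compact, complete, geodesically complete), is the compactness argument: for any $x\in U$ choose $\rho>0$ with $\bar B_\rho(x)\subset U$; then $\log_p$ restricted to the compact set $\bar B_\rho(x)$ is a continuous injection into the Hausdorff space $C_p$, hence a homeomorphism onto its image, and hence carries the open ball $B_\rho(x)$ to a set open in $\log_p(\bar B_\rho(x))$ and therefore open in $\log_p(U)$ near $\log_p(x)$. (One may also phrase this as: if $\log_p(x_n)\to\log_p(x)$ with $x_n\in U$, then the $x_n$ lie in a compact set, any subsequential limit has the same logarithm, and injectivity on a slightly larger log-injective neighborhood forces the limit to be $x$.) Also worth noting: the continuity of $x\mapsto[p,x]\in\Sigma_p$ in the Alexandroff angle metric is \emph{not} automatic in a merely locally uniquely geodesic space; it is the CAT$(0)$ comparison inequality (Alexandroff angle bounded above by comparison angle) together with the fact that comparison angles at $p$ shrink as $d(x_n,x)\to 0$ that makes it work, so you should invoke the curvature bound explicitly rather than attribute continuity to local unique geodesicity alone.
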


\begin{definition}
A subset $V$ of a uniquely geodesic space is 
\emph{convex}
if for any two points in $V$, the geodesic connecting them is
contained in $V$.  The subset $V$ is \emph{strictly convex} if it is
convex and if the frontier of $V$
contains no non-degenerate geodesic segment.
\end{definition}

\begin{lemma}\label{l:dconvex}
Let $X$ be a proper uniquely geodesic space satisfying the
geodesic extension property.
Suppose that $C$ is a compact 
strictly convex set in $X$ containing $p$ in its
interior, and suppose that $C$ is contained in a log-injective
neighborhood of $p$.  It follows that the frontier of
$C$ is homeomorphic to
$\Sigma_p$.
\end{lemma}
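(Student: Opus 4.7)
The plan is to build an explicit homeomorphism $\Phi\colon \Sigma_p\to\partial C$ by radial projection from $p$. By Observation \ref{obs:exp}, $\exp_p$ restricts to a homeomorphism $\log_p(C)\to C$. Convexity of $C$ together with $p\in C$ makes $\log_p(C)$ star-shaped at the cone point $0\in C_p$: for each $q\in C$ the unique geodesic $[p,q]$ lies in $C$, so $\log_p$ sends it to the radial segment from $0$ to $\log_p(q)$. I would therefore set
\[
\rho(v) = \sup\{t\geq 0\,:\,tv\in\log_p(C)\}\qquad\text{for each }v\in\Sigma_p,
\]
and define $\Phi(v)=\exp_p(\rho(v)v)$. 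Positivity of $\rho(v)$ follows from $p\in\mathrm{int}(C)$, finiteness from compactness of $\log_p(C)$, and the geodesic extension property lets me push the geodesic $t\mapsto\exp_p(tv)$ slightly past $t=\rho(v)$ inside the log-injective neighborhood containing $C$, so $\Phi(v)$ is a limit of points outside $C$ and therefore lies in $\partial C$.

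The crux of the argument is the continuity of $\rho$, and it is precisely here that strict convexity (rather than mere convexity) is needed. Upper semicontinuity is easy: if $v_n\to v$ and $\rho(v_n)\to \ell$, then for every $s<\ell$ the points $sv_n\in\log_p(C)$ converge to $sv$, and closedness of $\log_p(C)$ forces $sv\in\log_p(C)$, so $\rho(v)\geq \ell$. For lower semicontinuity I would argue by contradiction: suppose $v_n\to v$ but $\rho(v_n)\to \ell<\rho(v)$. Then $\Phi(v_n)=\exp_p(\rho(v_n)v_n)\to\exp_p(\ell v)\in\partial C$. The radial segment $\{\exp_p(tv):\ell\leq t\leq \rho(v)\}$ lies in $C$ by convexity, and I claim it lies entirely in $\partial C$: if some interior point $\exp_p(t_0v)$ with $t_0\in(\ell,\rho(v))$ were in $\mathrm{int}(C)$, then continuity of $\exp_p$ would put $\exp_p(t_0v_n)$ in $C$ for $n$ large, forcing $\rho(v_n)\geq t_0>\ell$ and contradicting $\rho(v_n)\to\ell$. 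The resulting nondegenerate geodesic segment contained in the frontier of $C$ contradicts strict convexity.

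Once $\rho$ is continuous, $\Phi$ is continuous; it is injective because $\Phi(v_1)=\Phi(v_2)$ forces $v_1$ and $v_2$ to be initial directions of a single unique geodesic from $p$, and surjective because for any $q\in\partial C$ the initial direction $v$ of $[p,q]$ satisfies $d(p,q)=\rho(v)$ (otherwise the same strict-convexity argument applied to the subsegment from $q$ to $\Phi(v)$ gives a contradiction). A small metric sphere about $p$ in $X$ is compact by properness, and $\log_p$ maps it bijectively onto $\{\varepsilon v:v\in\Sigma_p\}\subset C_p$, so $\Sigma_p$ is compact; any continuous bijection from a compact space to a Hausdorff space is a homeomorphism. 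The principal obstacle is the lower semicontinuity step, where strict convexity is essential; the rest is a routine star-shaped-domain argument.
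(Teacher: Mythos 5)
Your plan builds the map $\Phi\colon\Sigma_p\to\partial C$ by radial projection, which is the inverse of the map the paper constructs ($h\colon\partial C\to\Sigma_p$, sending $x$ to the class of $[p,x]$). The trade-off is instructive: continuity of $h$ is essentially free (geodesic segments vary continuously with their endpoints in a proper uniquely geodesic space), whereas your direction forces you to prove continuity of the radius function $\rho$. Your upper and lower semicontinuity arguments for $\rho$ are correct, and the lower-semicontinuity step is a nice use of strict convexity together with the approaching sequence $v_n$.

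The gap is in the surjectivity step. You write that if $q\in\partial C$ has direction $v$ with $d(p,q)<\rho(v)$, then ``the same strict-convexity argument applied to the subsegment from $q$ to $\Phi(v)$ gives a contradiction.'' But your lower-semicontinuity argument established that a radial segment lies in $\partial C$ \emph{by exploiting the sequence} $v_n$: an interior point would force $\rho(v_n)\ge t_0>\ell$ for large $n$. In the surjectivity setting there is no such sequence, so that mechanism is unavailable. What you actually know is: $[q,\Phi(v)]\subset C$ is nondegenerate with both endpoints in $\partial C$, and strict convexity then gives an interior point $z=\exp_p(t_0 v)\in\mathrm{int}(C)$ with $d(p,q)<t_0<\rho(v)$. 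To reach a contradiction you now need to promote $q$ (which lies between $p$ and $z$ on the radial geodesic) to an interior point of $C$ --- and this does \emph{not} follow from convexity alone in a general geodesic space. You need the tangent-cone argument: since $z\in\mathrm{int}(C)$ and $\exp_p$ is a homeomorphism on $\log_p$ of a log-injective neighborhood, $\log_p(C)$ contains a basic open set $(t_0-\epsilon,t_0+\epsilon)\times V$ around $\log_p(z)$; star-shapedness of $\log_p(C)$ then gives $(0,t_0+\epsilon)\times V\subset\log_p(C)$, which is an open neighborhood of $\log_p(q)=d(p,q)\,v$, so $q\in\mathrm{int}(C)$. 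This is precisely the content of the paper's proof that $h$ is injective --- the one nontrivial step in the whole lemma --- and since surjectivity of $\Phi$ is logically equivalent to injectivity of $h$, your writeup has elided the core of the argument. Once that is filled in, the proof is complete and correct, but as written the surjectivity step does not stand on its own.
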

\begin{proof}
Since under the hypotheses both the space of directions $\Sigma_p$ and
the frontier of $C$ (which we denote here by $\partial C$)
are compact Hausdorff spaces, it suffices to find
a continuous bijection from one to the other.

Let $h\co \partial C\to \Sigma_p$ be the map which sends a point $x$ to the
equivalence class of $[x,p]$.  Since $p\notin \partial C$, this map is
well-defined.
Since $X$ is a proper uniquely geodesic space, geodesic segments vary
continuously with their endpoints \cite[I.3.13]{BrHa}.  It
follows that the map $h$ is continuous.

We next show that $h$ is surjective.
Let $\theta\in \Sigma_p$, and choose a geodesic segment beginning at
$p$ with direction $\theta$.  Since $X$ satisfies the geodesic
extension property, this geodesic segment can be
extended to a point $x$ not in $C$.  Some point on the geodesic
$[p,x]$ must be in the frontier of $C$, and so $\theta$ is in the
image of $C$.  

Finally we show that $h$ is injective.  Choose $\theta\in \Sigma_p$,
and suppose that $h(x)=h(y)=\theta$ for some $x, y$ in $\partial C$.
Without loss of generality, suppose that $d(x,p)\leq d(y,p)$.
Since $C$ is strictly convex, the geodesic from $x$ to $y$ must be
contained in $C$ but cannot be contained entirely in $\partial C$.
Thus there is some point $z\in [x,y]$ so that $z$ is in the interior
of $C$.  By the hypotheses and Observation \ref{obs:exp}
there is an open set 
$U\subset C_p= [0,\infty)\times_r\Sigma_p$ containing $\log_p(C)$ so
that the exponential map $\exp_p$ is a homeomorphism from $U$ to its
image.  The point $\log_p(z)$ is contained in some basic open subset
of $U$ of the form $(d(z,p)-\epsilon,d(z,p)+\epsilon)\times V$ where
$V$ is an open set in $\Sigma_p$ containing $\theta$.  
But since $C$ is convex, $\log_p(C)$ must also contain the union of
the segments beginning at $p$ and ending in
$(d(z,p)-\epsilon,d(z,p)+\epsilon)\times V$; in other words,
$\log_p(C)$ must contain $(0,d(z,p)+\epsilon)\times V$.  In
particular, $C$ contains an open neighborhood of any point on the
geodesic between $p$ and $z$, and thus $x$ must be an interior point
of $C$, which is a contradiction.
\end{proof}

\section{The metric construction and proof of Theorem
  \ref{t:filling}}\label{s:basic}
In this section we prove Theorem \ref{t:filling}
by constructing suitable warped product metrics on the ``partial cones''
$C(N_i,T_i)$ (see Definition \ref{def.cone})
so that those metrics are compatible with the hyperbolic
metric on $\overline M$.  The metric will be shown to be CAT$(0)$ by
applying Theorem \ref{t:AB} near the singular part, and by computing
the sectional curvatures in the Riemannian part.

\subsection{A model for the singular part}
We first make an observation about warped products, whose proof we
leave to the reader.
\begin{lemma}\label{l:tcs}
  Let $I\subseteq \R$ be connected, and let $f\co I\to [0,\infty)$,
  and let $F$ be a geodesic metric space.  Suppose that $f$ has a
  local minimum at $z\in I$.
  \begin{enumerate}
  \item $\{z\}\times F$ is a convex subset of $Z = I\times_f F$.
  \item Let $y\in I$, and $x\in F$.
  The shortest path from $(y,x)$ to ${z}\times F$ in $Z$ is
    $[y,z]\times \{x\}$.
  \end{enumerate}
\end{lemma}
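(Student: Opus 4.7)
The plan is to argue directly from the definition of warped-product length for a Lipschitz path $\gamma=(\gamma_1,\gamma_2)\co [a,b]\to Z$, namely
\[ l(\gamma) \;=\; \int_a^b \sqrt{|\gamma_1'(t)|^2 + f^2(\gamma_1(t))\,|\gamma_2'(t)|^2}\,dt, \]
by bounding the integrand from below in two ways: dropping the $f^2|\gamma_2'|^2$ term to prove part (2), and dropping the $|\gamma_1'|^2$ term to prove part (1).

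For part (2), I would take any Lipschitz path $\gamma$ from $(y,x)$ to an arbitrary $(z,x')\in\{z\}\times F$ and observe
\[ l(\gamma) \;\ge\; \int_a^b |\gamma_1'(t)|\,dt \;\ge\; |z-y|, \]
while the horizontal path $t\mapsto(t,x)$ parametrizing $[y,z]\times\{x\}$ achieves this bound. Uniqueness should follow by tracking when equality holds in each step: the first inequality is strict unless $f(\gamma_1(t))|\gamma_2'(t)|=0$ almost everywhere, and the second unless $\gamma_1$ is monotone from $y$ to $z$; together, on any subinterval where $f\circ\gamma_1$ is positive, $\gamma_2$ must be constant, recovering $[y,z]\times\{x\}$. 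For part (1), given $(z,x_1),(z,x_2)\in\{z\}\times F$ and a geodesic $\sigma$ in $F$ joining them, the path $t\mapsto(z,\sigma(t))$ has length $f(z)\,d_F(x_1,x_2)$, and for any competing path $\gamma$ between the same endpoints the estimate
\[ l(\gamma) \;\ge\; \int_a^b f(\gamma_1(t))|\gamma_2'(t)|\,dt \;\ge\; f(z)\int_a^b|\gamma_2'(t)|\,dt \;\ge\; f(z)\,d_F(x_1,x_2), \]
using $f(\gamma_1(t))\ge f(z)$ on the image of $\gamma_1$, shows that the candidate is shortest, so any geodesic between two points of $\{z\}\times F$ stays in the slice.

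The one point that needs care is the inequality $f(\gamma_1(t))\ge f(z)$: this is a genuine hypothesis only when $z$ is a global minimum of $f$. When $z$ is merely a local minimum, both claims should be read locally, i.e.\ after restricting to a neighborhood $U\subseteq I$ of $z$ on which $z$ realizes the global minimum of $f|_U$ and working inside $U\times_f F$; paths that leave this subregion are automatically longer than the horizontal candidate by the part-(2) bound applied to the $I$-coordinate, so no information is lost. In the intended applications later in the paper, $z$ is always a global minimum---typically a zero of $f$ at the cone point of a partial cone $C(N_i,T_i)$---so this subtlety does not actually arise.
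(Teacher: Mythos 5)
The paper leaves this lemma's proof to the reader, so there is nothing to compare against; the task was to supply one, and yours is essentially sound. Arguing directly from the warped-product length formula is the natural route. Your proof of part (2) is clean and, as you note, doesn't even need the minimum hypothesis: dropping the fiber term gives $l(\gamma)\ge\int|\gamma_1'|\ge|z-y|$ for any path reaching $\{z\}\times F$, and $[y,z]\subseteq I$ since $I$ is connected. Your proof of part (1) correctly reduces convexity to the inequality $f(\gamma_1(t))\ge f(z)$, and you are right that this is a genuine hypothesis: if $f$ is only locally minimized at $z$ and takes strictly smaller values elsewhere, the slice can fail to be convex (take $F=\R$ and two points in the slice far apart; a path that detours out to a region where $f$ is small, crosses in the fiber direction cheaply, and returns will undercut the in-slice path). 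So the lemma should be read with ``global minimum,'' or locally, as you say. In every use in the paper --- $f=\cosh$ and $f=\sinh$ on $[0,\infty)$, minimized at $r=0$ --- the minimum is global, so this imprecision is harmless. Two small points worth noting in a careful write-up: (i) in part (1), when $f(z)>0$, the equality case of your first inequality forces $\gamma_1'=0$ a.e., so $\gamma_1\equiv z$; this shows \emph{every} geodesic between slice points stays in the slice, not merely that the slice path is one shortest path, which is what convexity demands; and (ii) your uniqueness discussion in part (2) tacitly assumes $f\circ\gamma_1>0$ a.e. on $(y,z)$ --- if $f$ vanished at some interior $w\in(y,z)$, a path could switch fibers at $r=w$ at no cost and its image need not equal $[y,z]\times\{x\}$ --- but this does not arise in the applications, where the only zero of the relevant warping function is at the endpoint $z$ itself.
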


We next give some applications of Theorem
\ref{t:AB}.
\begin{lemma}\label{l:step1}
  Let $E$ be complete and CAT$(k)$ for $k\in [-1,0]$.  The warped product
  \[ B = [0,\infty)\times_{\cosh(r)}E \]
  is CAT$(k)$, and $\{(0,e)\mid e\in E\}\subset B$ is convex.
\end{lemma}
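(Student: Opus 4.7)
The plan is to deduce both conclusions as direct applications of Theorem~\ref{t:AB} and Lemma~\ref{l:tcs}, taking base $[0,\infty)$, fiber $E$, and warping function $f(r) = \cosh(r)$.

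For the CAT$(k)$ statement, I would first observe that $[0,\infty)$ with its standard metric is a geodesic space containing no nondegenerate triangles, and therefore is CAT$(k)$ for every $k\in\R$; the fiber $E$ is CAT$(k)$ by hypothesis. I then set $K = K_F = k$ in Theorem~\ref{t:AB}. Since $\cosh(r)\geq 1 > 0$, the zero set $X = f^{-1}(0)$ is empty, so we are in case~(1) of the theorem and must verify $K_F\leq K(\inf f)^2$. With $\inf_{[0,\infty)}\cosh = 1$ and $K_F = K = k$, this is just $k\leq k$, which is immediate.

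The remaining point is that $f=\cosh$ is $\mathcal{F}k$-convex on $[0,\infty)$. Since unit-speed geodesics in $[0,\infty)$ are genuine intervals, this reduces to the smooth differential inequality $\cosh'' + k\cosh \geq 0$, and
\[
\cosh''(r) + k\cosh(r) = (1+k)\cosh(r) \geq 0
\]
because $k \geq -1$ by hypothesis. (This is where the restriction $k\in[-1,0]$ is used.) Theorem~\ref{t:AB} then gives that $B = [0,\infty)\times_{\cosh(r)}E$ is CAT$(k)$.

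For the convexity of $\{0\}\times E$ in $B$, observe that $\cosh$ attains its minimum on $[0,\infty)$ at $r=0$, so Lemma~\ref{l:tcs}(1) applies directly and yields that $\{0\}\times E$ is a convex subset of $B$. There is no substantial obstacle here: the proof is essentially bookkeeping to match the hypotheses of Theorem~\ref{t:AB}, with the only real content being the elementary identity $\cosh''+k\cosh = (1+k)\cosh$, which both produces the $\mathcal{F}k$-convexity and pinpoints $k=-1$ as the sharp lower bound on the curvature one gets from this warping.
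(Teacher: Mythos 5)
Your proof is correct and follows essentially the same route as the paper's: apply Theorem~\ref{t:AB} in case~(1) with $K=K_F=k$ (using $\inf\cosh=1$), and invoke Lemma~\ref{l:tcs} for the convexity of $\{0\}\times E$. The only cosmetic difference is that you verify $\mathcal{F}k$-convexity by the direct computation $\cosh''+k\cosh=(1+k)\cosh\geq 0$, whereas the paper cites that $\cosh$ is $\mathcal{F}(-1)$-convex and then appeals to Useful fact~\ref{convex}.\eqref{c-fort} to upgrade to $\mathcal{F}k$-convexity for $k\geq -1$; both are equivalent and your version makes the role of the hypothesis $k\in[-1,0]$ transparent.
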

\begin{proof}
  Since $\cosh(r)$ is $\mc{F}(-1)$ convex, it is $\mc{F}(k)$--convex,
  by \ref{convex}.\eqref{c-fort}. The zero set $\cosh^{-1}(0)$ is
  empty, and $\inf_r \cosh(r)=1$, so condition (1) of Theorem
  \ref{t:AB} is verified with $K = K_F = k$.  It follows that $B$ is
  CAT$(k)$.  Since the warping function $\cosh(r)$ has a minimum at
  $0$, Lemma \ref{l:tcs} implies that $\{0\}\times E$ is convex.
\end{proof}

\begin{proposition}\label{p:double}
  Suppose that $E$ is complete and  CAT$(k)$ for $k\in \{-1,0\}$, and
  suppose that $F$  is CAT$(1)$.  The space 
  \[W = [0,\infty)\times_{\cosh(r)}E\times_{\sinh(r)}F \]
  is CAT$(k)$, and $\{(0,e,-)\mid e\in E\}\subset W$ is convex.
\end{proposition}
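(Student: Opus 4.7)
The plan is to present $W$ as a single warped product over a CAT$(k)$ base and then apply the Alexander--Bishop theorem (Theorem \ref{t:AB}), leaving the convexity of the singular locus to a short projection argument.

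First I would set $B = [0,\infty)\times_{\cosh(r)}E$ and invoke Lemma \ref{l:step1} to conclude that $B$ is CAT$(k)$ and that $X := \{0\}\times E$ is convex in $B$. Writing $f\co B\to[0,\infty)$ for the warping function $f(r,e)=\sinh(r)$, one has $W = B\times_f F$ in the iterated sense, because the doubly warped Riemannian metric $dr^2+\cosh^2(r)g_E+\sinh^2(r)g_F$ factors exactly this way. Using Lemma \ref{l:tcs}, the shortest path from $(r_0,e_0)$ to $X$ in $B$ is $t\mapsto(t,e_0)$, which has length $r_0$, so $f = \sinh\circ d_B(\,\cdot\,,X)$.

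Next I would verify the hypotheses of Theorem \ref{t:AB} with base $B$, fibre $F$, $K=k$ and $K_F=1$. For the $\mc{F}k$--convexity of $f$: when $k=-1$, Useful Facts \ref{convex}.\eqref{c-sinh} applied to the convex set $X\subset B$ gives the required $\mc{F}(-1)$--convexity directly. When $k=0$, Useful Facts \ref{convex}.\eqref{c-dist} says $d_B(\cdot,X)$ is convex, and since $\sinh$ is convex and non-decreasing on $[0,\infty)$, the composition $\sinh\circ d_B(\cdot,X)$ is convex, i.e.\ $\mc{F}0$--convex. The zero set is $X\neq\emptyset$, so we are in case (2) of Theorem \ref{t:AB}; the unit-speed shortest geodesic from $X$ to $(r_0,e_0)$ is $\sigma(t)=(t,e_0)$, giving $(f\circ\sigma)'(0^+)=\cosh(0)=1$, so $(f\circ\sigma)'(0^+)^2=1\ge K_F$. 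Thus Theorem \ref{t:AB} yields that $W$ is CAT$(k)$.

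Finally I would prove that $\widetilde X := \{(0,e,-)\mid e\in E\}\subset W$ is convex. The projection $\pi\co W\to B$ is $1$--Lipschitz: any lipschitz path $\gamma=(\gamma_1,\gamma_2)$ satisfies
\[
|\gamma'|^2 = |\gamma_1'|^2 + \sinh^2(r(\gamma_1))|\gamma_2'|^2 \ge |\gamma_1'|^2.
\]
Given $p_i=(0,e_i,-)$, this gives $d_W(p_1,p_2)\ge d_B((0,e_1),(0,e_2))$. Conversely, since $X$ is convex in $B$, the $B$--geodesic $\gamma_1$ between $(0,e_1)$ and $(0,e_2)$ lies in $X$, so the path $(\gamma_1,\mathrm{const})$ in $W$ realises the same length as $\gamma_1$, giving equality $d_W(p_1,p_2)=d_B((0,e_1),(0,e_2))$. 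Because $W$ is CAT$(k)\subseteq $ CAT$(0)$, geodesics are unique; the unique $W$--geodesic between $p_1$ and $p_2$ must project to the unique $B$--geodesic $\gamma_1\subset X$, and along such a path $r\equiv0$ forces the $F$--coordinate to be collapsed. Hence the geodesic lies entirely in $\widetilde X$, proving convexity.

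The only non-routine step I anticipate is the bookkeeping for the $k=0$ case of $\mc{F}k$--convexity, since Useful Facts \ref{convex}.\eqref{c-sinh} only directly covers CAT$(-1)$; but the composition of the convex distance function with $\sinh$ resolves this cleanly, as indicated above.
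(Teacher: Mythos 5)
Your proposal is correct and follows essentially the same route as the paper: present $W$ as the iterated warped product $B\times_{\sinh(r)}F$ with $B=[0,\infty)\times_{\cosh(r)}E$, establish $\mathcal{F}k$--convexity of $\sinh(r)$ by treating the cases $k=0$ and $k=-1$ exactly as you do (via \ref{convex}.\eqref{c-dist} plus composition with the increasing convex $\sinh$, resp.\ \ref{convex}.\eqref{c-sinh}), verify case (2) of Theorem \ref{t:AB} on the shortest geodesics $\sigma(t)=(t,e)$ from $Y=\{0\}\times E$, and conclude CAT$(k)$. The only difference is cosmetic: for convexity of $\{(0,e,-)\}$ the paper directly observes that replacing any rectifiable path $\sigma(t)=(r(t),e(t),\theta(t))$ with $\bar\sigma(t)=(0,e(t),-)$ does not increase length, whereas you phrase the same fact through the $1$--Lipschitz projection to $B$ together with uniqueness of CAT$(0)$ geodesics; both are correct and essentially equivalent.
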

\begin{proof}
  By Lemma \ref{l:tcs}, the set $Y=\{0\}\times E$ is a convex subset of
  $B = [0,\infty)\times_{\cosh(r)}E$.  On $B$, the function 
  $d(r,e) =  r$ is the distance to $Y$.
  By Lemma \ref{l:step1} and Useful fact \ref{convex}.\eqref{c-dist},
  the function $d(\cdot, Y)$ is convex on $B$

  Suppose $k = 0$.
  The function $\sinh(r)$ is the composition of an
  increasing convex function with a convex function, hence it is
  convex.  If
  $k=-1$, then $\sinh(r)$ is $\mc{F}(-1)$--convex by 
  \ref{convex}.\eqref{c-sinh}.  In either case, $\sinh(r)$ is
  $\mc{F}k$--convex, and so we can try to apply Theorem \ref{t:AB} to
  the warped product
  \[ W = B \times_{\sinh(r)} F.\]
  We must verify
  condition (2) of Theorem \ref{t:AB}, since $Y = \sinh^{-1}(0)$ is
  nonempty.  Let $b=(z,e)\in B\setminus Y$.  We can apply the second
  part of Lemma \ref{l:tcs} to see that a shortest unit speed geodesic
  $\sigma$ from $F$ to $b$ is of the form $\sigma(t) = (t,e)$, with
  domain $[0,z]$.  We thus have $(\sinh\circ \sigma)'(0^+) = \cosh(0)
  = 1 \geq 1$ as required.

  Finally, we show that 
  $Z = \{(0,e,-)\mid e\in E\}$ is convex.  If $\sigma\co[0,1]\to W$ is any
  rectifiable path with both endpoints in $Z$, and $\sigma(t) =
  (r(t),e(t),\theta(t))$, we
  note that $\bar{\sigma}(t) = (0,e(t),-)$ is
  a shorter path with the same endpoints.  It follows that any
  geodesic with both endpoints in $Z$ must lie entirely in $Z$.
\end{proof}

Using the convention that $\E^{0}$ is a point, the euclidean space
$\E^k$ is CAT$(0)$ for all $n$, and is CAT$(-1)$ for $k=0$ and $k=1$.
We thus have:
\begin{corollary}\label{c:singular}
  Let $T$ be a complete CAT$(1)$ space, and let $k$ be a nonnegative
  integer.  The warped product
  \[\tilde{F} = [0,\infty)\times_{\cosh(r)}\E^k\times_{\sinh(r)}T\]
  is complete and CAT$(0)$.  If $k\leq 1$, then $\tilde{F}$ is CAT$(-1)$.
\end{corollary}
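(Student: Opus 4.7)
The plan is to obtain this corollary as a direct application of Proposition \ref{p:double}, with the two factors chosen as $E = \E^k$ and $F = T$. First I would verify the curvature hypotheses on the factors. The space $T$ is CAT$(1)$ by assumption, so the hypothesis on $F$ is immediate. For $E = \E^k$, euclidean space is complete and CAT$(0)$ for every $k\geq 0$, which already handles the first conclusion; for the CAT$(-1)$ conclusion I need $\E^k$ to be CAT$(-1)$, which holds exactly when $k\in\{0,1\}$ (a point is trivially CAT$(\kappa)$ for any $\kappa$, and $\R$ is CAT$(\kappa)$ for every $\kappa$ since it has no nontrivial geodesic triangles).

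With the hypotheses checked, I would apply Proposition \ref{p:double} twice. Taking $k=0$ in that proposition with $E=\E^k$ and $F=T$ gives that
\[\tilde{F} = [0,\infty)\times_{\cosh(r)}\E^k\times_{\sinh(r)}T\]
is CAT$(0)$, for every $k\geq 0$. If in addition $k\leq 1$, then $\E^k$ is CAT$(-1)$, so taking $k=-1$ in Proposition \ref{p:double} instead gives that $\tilde F$ is CAT$(-1)$.

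Finally I would record completeness of $\tilde F$. Both factors $\E^k$ and $T$ are complete, and the warping functions $\cosh$ and $\sinh$ are continuous and bounded (above and below away from the zero set $\{r=0\}$) on each bounded interval in $[0,\infty)$; together with the fact that $\sinh$ vanishes only at the cone locus (where $\{0\}\times\E^k\times T$ collapses onto $\{0\}\times\E^k$), this is the standard situation in which the warped product inherits completeness from the factors. Since no subtlety arises, the result follows.

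There is essentially no hard step here; the only point that might appear to need comment is the convention that $\E^0$ is a point and hence CAT$(-1)$, and the fact that $\E^1=\R$ is CAT$(\kappa)$ for all $\kappa$, which is why the CAT$(-1)$ conclusion is restricted to $k\leq 1$. The substantive work has already been done in Proposition \ref{p:double}, which in turn rested on Theorem \ref{t:AB} applied to the two successive warpings.
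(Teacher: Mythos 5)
Your proposal is correct and matches the paper's own argument: the corollary is deduced from Proposition \ref{p:double} by taking $E=\E^k$ and $F=T$, using that $\E^k$ is always complete and CAT$(0)$ and is CAT$(-1)$ precisely when $k\le 1$ (with the convention that $\E^0$ is a point). Your added remark on completeness and your care about the notational clash (the curvature parameter in Proposition \ref{p:double} versus the dimension $k$ of $\E^k$) are both sensible and consistent with the paper.
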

\begin{lemma}\label{l:completion}
  Let $T$ be a flat manifold.  The warped product   
  \[\tilde{F} = [0,\infty)\times_{\cosh(r)}\E^k\times_{\sinh(r)}T\]
  is the metric completion of the Riemannian manifold
  \[D=(0,\infty)\times_{\cosh(r)}\E^k\times_{\sinh(r)}T\]
\end{lemma}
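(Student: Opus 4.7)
The plan is to verify three defining properties of a metric completion: (a) $\tilde F$ is complete, (b) $D$ is dense in $\tilde F$, and (c) the inclusion $D\hookrightarrow\tilde F$ is an isometric embedding of $D$ equipped with its Riemannian distance $d_D$. Together these identify $\tilde F$ with the completion of $D$.

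Property (a) is immediate from Corollary~\ref{c:singular}. For (b), the complement $\tilde F\setminus D$ is precisely the collapsed stratum $\{r=0\}$, which under the identification $(0,e,\theta_1)\sim(0,e,\theta_2)$ is a canonical copy of $\E^k$; any such point $(0,e)$ is the limit in $\tilde F$ of $(1/n,e,\theta_0)\in D$ for any fixed $\theta_0\in T$, since the warped-product distance from $(0,e)$ to $(1/n,e,\theta_0)$ is exactly $1/n$.

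For (c), first observe that on $D$ the warped-product length integrand agrees with the Riemannian length-element of
\[g = dr^2 + \cosh^2(r)\,g_{\E^k} + \sinh^2(r)\,g_T,\]
so every Lipschitz path in $D$ has the same length whether measured Riemannianly or as a warped-product path in $\tilde F$. Since $D$-paths form a subfamily of $\tilde F$-paths, this gives $d_{\tilde F}(p,q)\le d_D(p,q)$ for all $p,q\in D$. For the reverse inequality I would use a radial perturbation: given $\epsilon>0$ and a Lipschitz path $\gamma\colon[0,L]\to\tilde F$ from $p$ to $q$ of length within $\epsilon$ of $d_{\tilde F}(p,q)$, note that on each component of $\{t:r(\gamma(t))>0\}$ the path admits a continuous lift $(r(t),e(t),\theta(t))$ to $(0,\infty)\times\E^k\times T$; for small $\eta>0$ replace $\gamma$ by the shifted path $\gamma_\eta(t):=(r(t)+\eta,e(t),\theta(t))$, which lies in $D$. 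At each crossing of $\{r=0\}$ bridge the $\theta$-discontinuity between adjacent lifted pieces by a geodesic segment at radius $\eta$, whose length is scaled by $\sinh(\eta)$, and at the two ends prepend and append radial segments of length $\eta$ to recover the endpoints $p$ and $q$. Dominated convergence on each lifted piece (using that $\gamma$ is Lipschitz, so $r',e',\theta'$ lie in $L^\infty$, and that $\cosh(r+\eta), \sinh(r+\eta)$ are uniformly bounded for $\eta\in[0,1]$) then yields $L(\gamma_\eta)\to L(\gamma)$.

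The main obstacle is controlling the interpolation cost at $\{r=0\}$ crossings uniformly in $\eta$: \emph{a priori} the $\theta$-coordinate can have unbounded variation near a crossing of the singular stratum, but the rectifiability of $\gamma$ implies that the contribution $\int\sinh(r)|\theta'|\,dt$ to its length in a neighborhood of each crossing bounds the corresponding interpolation cost at radius $\eta$ by an elementary comparison, so that the total interpolation cost tends to $0$ with $\eta$. Granted this estimate, one obtains $d_D(p,q)\le d_{\tilde F}(p,q)+O(\epsilon)$ for arbitrary $\epsilon>0$, completing the isometric embedding in (c); combining (a)--(c) identifies $\tilde F$ with the metric completion of $D$.
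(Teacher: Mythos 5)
Your outline of (a) completeness, (b) density, and (c) isometric embedding is the right skeleton, and the easy inequality $d_{\tilde F}\le d_D$ is handled correctly. The problem is in the hard direction of (c), and it is exactly the one you flag yourself as ``the main obstacle'': you give no actual argument that the total bridging cost at crossings of $\{r=0\}$ tends to zero with $\eta$, and the claim you make in its place is false. The lift of a Lipschitz path in $\tilde F$ to $(0,\infty)\times\E^k\times T$ need not have $\theta'\in L^\infty$ (the metric degenerates in the $T$-direction as $r\to 0$, so the $\theta$-coordinate of the lift can be merely continuous, or even jump across an isolated zero of $r$), which also undermines the dominated-convergence step. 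More seriously, the length integrand $\sinh(r)\,|\theta'|$ vanishes at the crossing, so the local length contribution does not bound $d_T(\theta_-,\theta_+)$; a near-minimizing Lipschitz path can, a priori, meet $\{r=0\}$ in infinitely many components with jumps $\Delta_i$ satisfying $\sum_i\Delta_i=\infty$, and then $\sum_i \sinh(\eta)\,\Delta_i$ does not go to $0$.

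The paper sidesteps this entirely: since $\tilde F$ is CAT$(0)$ (Corollary~\ref{c:singular}) and the stratum $\{(0,e,-)\}$ is \emph{convex} in $\tilde F$ (Proposition~\ref{p:double}), one may take $\gamma$ to meet $\{r=0\}$ in a single (possibly degenerate, possibly empty) interval $[t_1,t_2]$. There is then only one ``bridge'' to build, across a definite interval, and its cost is controlled by the uniform continuity of the coordinate functions on a compact domain, giving $\delta(\epsilon)\to 0$. Your radial-perturbation idea could be made to work \emph{after} you make this reduction (so that there is a single crossing), but as written the proposal does not establish the reduction or supply a substitute bound, so the argument is incomplete.
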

\begin{proof}
  The manifold $D$ is clearly dense in $\tilde{F}$, so it suffices to
  show that $D$ includes isometrically in $\tilde{F}$.  Let
  $x_1=(r_1,e_1,t_1)$ and $x_2=(r_2,e_2,t_2)$ be two points in $D$,
  and let $\epsilon>0$.
  Let $\gamma$ be a lipschitz path in the product
  $[0,\infty)\times \E^k\times T$ nearly realizing the distance
   between $x_1$ and $x_2$ in $\tilde{F}$,
  so that the length in $\tilde{F}$ of $\gamma$ is at most
  $d_{\tilde{F}}(x_1,x_2)+\epsilon$.  
  We will replace $\gamma$ by a path $\gamma'$ in $D$ so that the
  length of $\gamma'$ exceeds the length of $\gamma$ by at most
  $\delta(\epsilon)$, with $\lim_{\epsilon\to
    0^+}\delta(\epsilon)=0$.  Letting $\epsilon$ tend to zero, the
  lemma will follow.

  For $t$ in the domain of
  $\gamma$, we have $\gamma(t) = (r(t),e(t),\theta(t))$.  If $r(t)$ is
  positive for all $t$, then $\gamma$ stays inside $D$, and there is
  nothing to prove.  We therefore assume that $r(t)=0$ for $t$ in a
  (possibly degenerate) interval $[t_1,t_2]$.
  By Proposition \ref{p:double},  $\{(0,e,-)\mid e\in \E^k\}$ is
  convex, so 
  we have $r(t)>0$ for any $t\notin [t_1,t_2]$.

  The
  coordinates of $\gamma$ are uniformly continuous in $t$, so we can
  find small positive $\alpha_1$ and $\alpha_2$ so that 
  $r(t_1-\alpha_1)=r(t_2+\alpha_2)$ and all the
  following are satisfied:
  \begin{eqnarray*}
    \max\{d(e(t_1-\alpha_1),e(t_1)),d(e(t_2+\alpha_2),e(t_2))\}& < &\epsilon\\
    \max\{d(\theta(t_1-\alpha_1),\theta(t_1)),d(\theta(t_2+\alpha_2),\theta(t_2))\}&
    < &\epsilon\\
    \max\{\cosh(r(t_1-\alpha_1))-1,\sinh(r(t_1-\alpha_1))\} & <
    &\epsilon
  \end{eqnarray*}
  Let $e'\co [t_1-\alpha_1,t_2+\alpha_2]\to \E^k$ be a constant-speed
  geodesic from $e(t_1-\alpha_1)$ to $e(t_2+\alpha_2)$, and let
  $\theta'\co[t_1-\alpha_1,t_2+\alpha_2] $ be a constant-speed
  geodesic from $\theta(t_1-\alpha_1)$ to $\theta(t_2+\alpha_2)$. Let
  $\gamma'$ be given by
\[ \gamma'(t) = 
\begin{cases}
  \gamma(t) & t<t_1-\alpha_1\\
  (r(t_1-\alpha_1),e'(t),\theta'(t)) & t_1-\alpha_1\leq t\leq
  t_2+\alpha_2\\
  \gamma(t) & t>t_2+\alpha_2
\end{cases}
  \]
  As the reader may check,
  the difference between the length of $\gamma$ and the length of
  $\gamma'$ is at most
  $\epsilon(d(\theta(t_1),\theta(t_2)+d(e(t_1),e(t_2))+\epsilon+2\epsilon^2$.
  Letting $\epsilon$ tend to zero, we have established the lemma.
\end{proof}

If $T$ is a Riemannian
manifold, the warped product from
Corollary \ref{c:singular} is a Riemannian manifold
in a neighborhood of any $p\notin\{0\}\times \E^k \times T$.
It follows that the space of
directions at $p$ is a sphere.  
The next lemma describes the space
of directions at a non-manifold point.

\begin{lemma}\label{l:dirjoin}
Let $\tilde{F}$ be as in Corollary \ref{c:singular}.
The space of directions $\Sigma_{(0,e,-)}(\tilde{F})$ at a point $(0,e,-)$ of
$\tilde{F}$ is isometric to the spherical join $S^{k-1}\join T$.  
\end{lemma}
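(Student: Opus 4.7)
The plan is to apply Proposition~\ref{p:dirwarp} iteratively by viewing $\tilde{F}$ as the nested warped product $B\times_{\sinh(r)}T$, where $B=[0,\infty)\times_{\cosh(r)}\E^{k}$. Since the outer warping function $\sinh(r)$ vanishes at the base point $(0,e)\in B$, the second part of Proposition~\ref{p:dirwarp} yields
\[
\Sigma_{(0,e,-)}(\tilde{F})\;\cong\;\Sigma_{(0,e)}(B)\times_{D\sinh_{(0,e)}}T.
\]
Since the inner warping function satisfies $\cosh(0)=1>0$, the first part of Proposition~\ref{p:dirwarp} applies at $(0,e)\in B$, so
\[
\Sigma_{(0,e)}(B)\;\cong\;\Sigma_{0}([0,\infty))\join\Sigma_{e}(\E^{k})\;\cong\;\{\mathrm{pt}\}\join S^{k-1},
\]
which is isometric to the closed hemisphere $[0,\pi/2]\times_{\sin\phi}S^{k-1}$. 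Here $\phi$ measures angle from the unique direction in $\Sigma_{0}([0,\infty))$.

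The next step is to identify $D\sinh_{(0,e)}$ as a function on this hemisphere. A unit-speed geodesic in $B$ issuing from $(0,e)$ in the direction indexed by $(\phi,u)$ satisfies $r'(0^{+})=\cos\phi$, as one reads off from the product decomposition $C_{(0,e)}(B)\cong[0,\infty)\times\R^{k}$ provided by the first part of Proposition~\ref{p:dirwarp}. Therefore
\[
D\sinh_{(0,e)}(\phi,u)\;=\;\cosh(0)\cos\phi\;=\;\cos\phi,
\]
and combining this with the isometry above gives
\[
\Sigma_{(0,e,-)}(\tilde{F})\;\cong\;[0,\pi/2]\times_{\sin\phi}S^{k-1}\times_{\cos\phi}T.
\]

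The final step, which I regard as the main bookkeeping obstacle, is to recognize this iterated warped product as the spherical join $S^{k-1}\join T$. From Definition~\ref{d:sphjoin}, the spherical join $A\join B$ has the standard warped-product presentation $[0,\pi/2]\times_{\cos\psi}A\times_{\sin\psi}B$, and the reparametrization $\psi=\pi/2-\phi$ converts this expression (with $A=S^{k-1}$, $B=T$) into exactly the formula above. The two join poles align with the two degeneracies correctly: the $\psi=0$ pole of $S^{k-1}\join T$ collapses the $T$-fiber, matching $\phi=\pi/2$ where $\cos\phi=0$, while the $\psi=\pi/2$ pole collapses $S^{k-1}$, matching $\phi=0$ where $\sin\phi=0$. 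The degenerate case $k=0$ reduces to a one-step application of Proposition~\ref{p:dirwarp}, using the conventions $S^{-1}=\emptyset$ and $\emptyset\join T=T$.
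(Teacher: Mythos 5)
Your proof is correct, but it takes a genuinely different route from the paper's.  The paper nests the iterated warped product as $\tilde F = \bigl([0,\infty)\times_{\sinh r}T\bigr)\times_{\cosh r}\E^k$, placing $T$ (the factor attached to the vanishing warping function) on the inside.  Applying the singular case of Proposition~\ref{p:dirwarp} to the inner factor gives $\Sigma_{(0,-)}\bigl([0,\infty)\times_{\sinh r}T\bigr)\cong T$ with essentially no computation, since $\Sigma_0\bigl([0,\infty)\bigr)$ is a point and $\sinh'(0)=1$; then the regular case at the outer level produces the spherical join $T\join S^{k-1}$ in one stroke, since the join is exactly the output of the regular-point formula.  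You instead nest with $\E^k$ inside, so that the regular case is applied first (yielding the closed hemisphere $\{\mathrm{pt}\}\join S^{k-1}$) and the singular case is applied second.  This forces two extra pieces of bookkeeping which the paper avoids: the computation of the restriction $D\sinh_{(0,e)}=\cos\phi$ to the hemisphere, and the identification of the doubly warped product $[0,\pi/2]\times_{\sin\phi}S^{k-1}\times_{\cos\phi}T$ with the join $S^{k-1}\join T$.  Both of these are carried out correctly; the latter is essentially the fact that $C(A\join B)\cong C(A)\times C(B)$ read in polar coordinates (cf.\ \cite[I.5]{BrHa}), which is slightly more than you can literally read off Definition~\ref{d:sphjoin}, so a pointer to that reference would tighten the last step.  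In short, the paper's choice of nesting is the cleaner one: it is arranged so that the join structure appears automatically rather than having to be recognized at the end.
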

\begin{proof}
Proposition \ref{p:dirwarp} can be applied twice, as follows.  Let
$B = [0,\infty)\times_{\sinh(r)}T$, and apply the second part of
Proposition \ref{p:dirwarp} to deduce 
\[\Sigma_{(0,-)}(B)\cong T. \]
Next, since $\tilde{F} \cong B\times_{\cosh{r}}\E^k$, we can apply the first
part of Proposition \ref{p:dirwarp} to deduce that
\[\Sigma_{(0,e,-)}(\tilde{F})\cong \Sigma_{(0,-)}(B)\join \Sigma_e(E).\] 
\end{proof}
\begin{lemma}\label{l:gep}
  Suppose $T$ is a complete flat Riemannian 
  manifold with injectivity radius bigger than
  $\pi$.  Then $\tilde{F} =
  [0,\infty)\times_{\cosh(r)}\E^k\times_{\sinh(r)}T$ has the geodesic
    extension property.
\end{lemma}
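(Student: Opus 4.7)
The plan is to verify the geodesic extension property at every point of $\tilde{F}$. By Corollary~\ref{c:singular}, $\tilde{F}$ is a complete CAT$(0)$ space, so the extension problem reduces to the following criterion: $\tilde{F}$ has the geodesic extension property if and only if, at every $p\in\tilde{F}$ and every $v\in\Sigma_p$, there is some $v'\in\Sigma_p$ with $d_{\Sigma_p}(v,v')=\pi$. Indeed, given a geodesic $[x,p]$ terminating at $p$, if $v$ is the direction of $[p,x]$ and $v'$ is an antipode realized by some segment $[p,y]$ (such a representative exists since $\Sigma_p$ consists by definition of germs of geodesic segments issuing from $p$), then the Alexandrov angle $\pi$ at $p$ forces the concatenation $[x,p]\cup[p,y]$ to be a local geodesic in the CAT$(0)$ setting.

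At every smooth Riemannian point $p$ (where $r(p)>0$), $\Sigma_p$ is a round sphere and antipodes are manifest. The work is therefore concentrated at the singular points $p=(0,e,-)$. By Lemma~\ref{l:dirjoin}, $\Sigma_p$ is isometric to the spherical join $S^{k-1}\join T$. Given $v=(\phi,a,t)\in S^{k-1}\join T$, I propose as candidate antipode $v'=(\phi,-a,t')$, where $-a$ is the $S^{k-1}$-antipode of $a$ and $t'\in T$ is any point with $d_T(t,t')\geq\pi$. The distance formula in the join then gives
\[
\cos d_{\Sigma_p}(v,v') \;=\; \cos^2\phi\cdot(-1) + \sin^2\phi\cdot(-1) \;=\; -1,
\]
so $d_{\Sigma_p}(v,v')=\pi$. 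The degenerate cases $\phi=0$ (antipode lies in $S^{k-1}$) and $\phi=\pi/2$ (antipode lies in $T$) work identically, using only one of the two factors.

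The hypothesis $\mathrm{inj\,rad}(T)>\pi$ enters precisely at the step of producing the witness $t'$. Since $T$ is complete and flat, the exponential map at $t$ is defined on all of the tangent space, and the injectivity radius condition means it is a diffeomorphism onto a ball of radius exceeding $\pi$ around $t$. Consequently, for any unit tangent vector $u$ at $t$ the point $t':=\exp_t(\pi u)$ lies in $T$ at distance exactly $\pi$ from $t$, furnishing the required antipode. The main conceptual obstacle is verifying the CAT$(0)$ geodesic-extension criterion and correctly reading off the antipodal structure of the spherical join $S^{k-1}\join T$; once these are in hand, the hypothesis on the injectivity radius is exactly what is needed to ensure the $T$-factor contributes antipodes to the join, and the rest is bookkeeping.
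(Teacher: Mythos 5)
Your proof is correct and follows the same route as the paper's: reduce to existence of antipodal directions, handle the Riemannian locus trivially, and at singular points use Lemma~\ref{l:dirjoin} together with the injectivity radius hypothesis to produce an antipode $(\phi,-\alpha,\theta')$ in $S^{k-1}\join T$. The only difference is cosmetic: you write out the spherical-join distance computation that the paper dismisses as ``straightforward to check.''
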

\begin{proof}
  Since $\tilde{F}$ is CAT$(0)$ by Corollary \ref{c:singular},
  geodesics are the same as local geodesics.
  Away from $V=\{(0,e,-)\mid
  e\in \E^k\}$, the space $\tilde{F}$ is Riemannian, so geodesics can
  be extended in $\tilde{F}\setminus V$. 

  At a point $p$ of $V$, Lemma \ref{l:dirjoin} implies that the space of
  directions $\Sigma_p$ is isometric to $S^{k-1}\join T$.  
  Suppose $\sigma$ is a geodesic segment terminating at $p$, and let
  $[\sigma]=(\phi,\alpha,\theta)\in \Sigma_p$ be the direction of
  $\sigma$, where $\phi\in [0,\frac{\pi}{2}]$, $\alpha\in S^{k-1}$,
  and $\theta\in T$.  Since $T$ has injectivity radius bigger than
  $\pi$, there is some $\theta'$ with $d_T(\theta,\theta')=\pi$.  It
  is straightforward to check that the Alexandrov angle between
  $[\sigma]$ and $(\phi,-\alpha,\theta')$ is $\pi$.  Letting $\sigma'$
  be any geodesic segment starting at $p$, with direction
  $(\phi,-\alpha,\theta')$, we see that $\sigma'$ geodesically extends $\sigma$
  past $p$.
\end{proof}

We next argue that the space $\tilde{F}$ described in Corollary
\ref{c:singular} has log-injective neighborhoods at the singular
points, in the sense of Definition \ref{d:linj}.  It suffices to show that geodesic segments emanating from a
point in the singular set can only make an angle of zero if they coincide on an
initial subsegment.  We first find the direction of an arbitrary
geodesic segment emanating from a point in the singular set.
\begin{lemma}\label{l:direction}
Let $T$ be a flat manifold of injectivity radius larger than $\pi$.
Let $p_0$ be a point in 
$E = \{0\}\times \E^k\subset [0,\infty)\times_{\cosh(r)}\E^k\times_{\sinh(r)}T$, 
and let $p_1$ be some
other point of $\tilde{F}$; that is,
$p_0=(0,a_0,-)$ and $p_1 = (t_1,a_1,\theta_1)$ for some $t_1\geq
0$, $a_0$ and $a_1\in E$ and $\theta_1\in T$.  
Let $\sigma$ be a geodesic from $p_0$ to $p_1$.
If $a_1=a_0$, then the direction
of $\sigma$ at $p_0$
(in the spherical join coordinates of Definition \ref{d:sphjoin}) is
$(\frac{\pi}{2},-,\theta_1)$. Otherwise let $\alpha$ be the
direction of a geodesic in $E$ from $a_0$ to $a_1$;
the direction of $\sigma$
is $(\phi,\alpha,\theta_1)$ where
\[\tan(\phi)=\frac{\tanh(t_1)}{\sinh(|a_1-a_0|)}.\] 
\end{lemma}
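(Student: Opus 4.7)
The plan is to find a low-dimensional totally geodesic subspace of $\tilde F$ containing both $p_0$ and $p_1$, compute the initial direction of the geodesic there explicitly, and transport the answer back to $\Sigma_{p_0}(\tilde F)$ via the join identification of Lemma~\ref{l:dirjoin}. Because $\tilde F$ is CAT$(0)$ by Corollary~\ref{c:singular}, the geodesic from $p_0$ to $p_1$ is unique and must lie in any totally geodesic subspace containing both endpoints, so this reduction is legitimate.

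For $a_0=a_1$, I would use the isometric involution $(r,a,\theta)\mapsto(r,2a_0-a,\theta)$ of $\tilde F$, whose fixed set $Q=\{a=a_0\}$ is therefore totally geodesic and isometric to the warped product $[0,\infty)\times_{\sinh(r)}T$. This is a ``hyperbolic cone'' on $T$ with cone point $p_0$; Proposition~\ref{p:dirwarp} identifies $\Sigma_{p_0}(Q)$ with $T$, and the unique geodesic from the cone point to $(t_1,\theta_1)$ is radial, with direction $\theta_1\in T$. The inclusion $\Sigma_{p_0}(Q)\hookrightarrow \Sigma_{p_0}(\tilde F)=S^{k-1}\join T$ realizes $\Sigma_{p_0}(Q)$ as the $T$-factor of the join, so the image of $\theta_1$ is the join point $(\frac{\pi}{2},-,\theta_1)$, as claimed.

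For $a_0\neq a_1$, let $L\subset\E^k$ be the affine line through $a_0$ and $a_1$ with unit direction $\alpha$ from $a_0$ toward $a_1$, and set $\Pi:=\{(r,a,\theta_1):r\geq 0,\ a\in L\}\subset\tilde F$. I would verify that $\Pi$ is totally geodesic by writing it as the intersection of the $\theta_1$-slice $\{\theta=\theta_1\}$ (totally geodesic because constant-$Y$ slices in a warped product $B\times_f Y$ always are) with the ``$L$-slice'' (totally geodesic by the orthogonal splitting $\E^k=L\oplus L^\perp$). The induced metric on $\Pi$ is $dr^2+\cosh^2(r)\,dl^2$, which is the Fermi-coordinate form on the half-plane $\{r\geq 0\}\subset\H^2$ bounded by the reference geodesic. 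Under this isometry $p_0\mapsto(0,a_0)$, $p_1\mapsto(t_1,a_1)$, and the foot of the perpendicular from $p_1$ to the boundary is $(0,a_1)$; standard hyperbolic right-triangle trigonometry applied to the resulting triangle (legs $|a_1-a_0|$ and $t_1$) gives the angle at $p_0$ as
$$\tan(\phi)=\frac{\tanh(t_1)}{\sinh(|a_1-a_0|)}.$$

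The final step, which I expect to be the main obstacle, is translating the slice-angle $\phi$ into coordinates on $S^{k-1}\join T$. The embedding $\Sigma_{p_0}(\Pi)\hookrightarrow S^{k-1}\join T$ sends the boundary direction $+\alpha$ (slice-angle $0$) to $\alpha\in S^{k-1}$, and sends the radial direction (slice-angle $\pi/2$) to $(\frac{\pi}{2},-,\theta_1)$, the latter by the previous case applied inside the $3$-dimensional totally geodesic $\theta_1$-slice $\{\theta=\theta_1\}$. Since $\Sigma_{p_0}(\Pi)$ is a half-circle of length $\pi$ and the points $\alpha\in S^{k-1}$ and $(\frac{\pi}{2},-,\theta_1)\in T$ are joined in $S^{k-1}\join T$ by the length-$\frac{\pi}{2}$ geodesic $\phi\mapsto(\phi,\alpha,\theta_1)$, the embedding must send slice-angle $\phi\in[0,\frac{\pi}{2}]$ to the join point $(\phi,\alpha,\theta_1)$, completing the formula. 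Verifying this interpolation carefully (for example via a direct Alexandrov-angle computation between $\sigma$ and the two reference geodesics at $p_0$) is the principal technical point of the argument.
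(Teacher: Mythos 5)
Your proof is correct and follows essentially the same route as the paper's: reduce to a two-dimensional totally geodesic slice of $\tilde F$ isometric to a hyperbolic half-plane and apply hyperbolic right-triangle trigonometry to compute the angle. The differences are cosmetic — the paper maps its slice to the upper half-plane model via an explicit isometry $h(t,a)=e^{ua}(\tanh t + i\,\mathrm{sech}\,t)$ where you use Fermi coordinates, and you spell out the $a_0=a_1$ case and the transfer of the slice angle into the join coordinates of $\Sigma_{p_0}(\tilde F)\cong S^{k-1}\join T$, steps the paper leaves implicit.
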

\begin{proof}
If $a_1 = a_0$ the Lemma is obvious.
Otherwise, we identify $\R$ with a geodesic $\gamma$ in $E$ passing through $a_0$ and $a_1$,
setting $a_0 = 0$.  The warped product $W =
[0,\infty)\times_{u\cosh(t)}\R\subset \tilde{F}$ contains the geodesic from
$p_0$ to $p_1$.

The map $h(t,a) = e^{ua}(\tanh t+i\mathrm{sech}\ t)$ takes $W$
isometrically onto the hyperbolic half plane in the upper half-space
model consisting of points with nonnegative real part. The map $h$ takes $p_0$
to $i$ and $p_1$ to some point of modulus at least $1$.  The geodesic
$\gamma$ is sent to the positive imaginary axis.
It is an exercise in
hyperbolic geometry to verify that if the geodesic from $h(p_0)$
to $h(p_1)$ makes an angle of $\phi$ with $h(\gamma)$, then
$\tan(\phi)=\frac{\tanh(t_1)}{\sinh(|a_1-a_0|)}$.
The lemma follows.
\end{proof}

\begin{corollary}\label{c:loginj}
Let $p=(0,a,-)$ lie in $E\subset \tilde{F}$.  Any open neighborhood of $p$ is
log-injective at $p$.
\end{corollary}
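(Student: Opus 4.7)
The plan is to establish the stronger statement that $\log_p\co \tilde{F} \to C_p(\tilde{F})$ is injective on all of $\tilde{F}$, which immediately implies the corollary. Since $\tilde{F}$ is CAT$(0)$ by Corollary \ref{c:singular}, geodesic segments from $p$ to any other point are unique, so $\log_p$ is well-defined everywhere. It remains to show that if $q_1, q_2 \in \tilde{F}$ satisfy $d(p, q_1) = d(p, q_2)$ and $[p, q_1] = [p, q_2]$ in $\Sigma_p \cong S^{k-1} \join T$, then $q_1 = q_2$.

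Writing $p = (0, a, -)$ and $q_i = (t_i, a_i, \theta_i)$, I would apply Lemma \ref{l:direction} to read off each direction $[p, q_i] \in \Sigma_p$. If $a_i = a$, then $[p, q_i] = (\pi/2, -, \theta_i)$; otherwise $[p, q_i] = (\phi_i, \alpha_i, \theta_i)$, where $\alpha_i \in S^{k-1}$ is the direction in $\E^k$ from $a$ to $a_i$ and $\phi_i \in (0, \pi/2)$ satisfies $\tan(\phi_i) = \tanh(t_i)/\sinh(|a_i - a|)$. The assumption $[p, q_1] = [p, q_2]$ then forces $\alpha_1 = \alpha_2$, $\theta_1 = \theta_2$, and $\phi_1 = \phi_2$, once one carefully handles the degenerate spherical-join cases in which one coordinate is ignored: $\phi = \pi/2$ forces $a_1 = a_2 = a$, and $\phi = 0$ forces $t_1 = t_2 = 0$ so that $q_i \in E$.

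In the generic case $0 < \phi < \pi/2$, both $q_1$ and $q_2$ lie on a common hyperbolic half-plane $W$, namely the image of the isometry $h$ used in the proof of Lemma \ref{l:direction} for the common $\alpha$ and $\theta$; inside $W$, the emanating direction and the distance from $p$ determine a point in hyperbolic $2$--space uniquely, so $q_1 = q_2$. In the boundary cases $\phi \in \{0, \pi/2\}$, the distance reduces trivially to $|a_i - a|$ or $t_i$, each of which recovers $q_i$ from its direction. The main obstacle I anticipate is bookkeeping the spherical-join degeneracies: one must verify that in the cases $\phi = 0$ and $\phi = \pi/2$, the ``missing'' coordinate really is irrelevant to identifying $q_i$, so that no information is lost when passing from $(d(p,q_i), [p,q_i])$ back to $q_i$.
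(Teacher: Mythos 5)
Your proof is correct and follows essentially the same route as the paper's: both arguments use Lemma \ref{l:direction} to extract the spherical-join coordinates $(\phi,\alpha,\theta)$ of the direction, observe that equal directions force the two geodesics to lie in a common hyperbolic half-plane (the image of the isometry $h$), and conclude by log-injectivity of the half-plane. Your version just spells out the degenerate cases $\phi\in\{0,\pi/2\}$ more explicitly, which the paper leaves implicit.
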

\begin{proof}
  Let $p_1 = (t_1,a_1,\theta_1)$ and $p_2 = (t_2,a_2,\theta_2)$ be
  points of $\tilde{F} =
  [0,\infty)\times_{\cosh(r)}\E^k\times_{\sinh(r)}T$, and suppose that
  $\log(p_1)=\log(p_2)$ in the tangent cone at $p$.
  Let $\gamma_i$ be the unique geodesic from $p$ to $p_i$, for $i\in
  \{1,2\}$.  Since $\log(p_1) = \log(p_2)$, the geodesic segments
  $\gamma_1$ and $\gamma_2$ have the same length and the same
  direction $(\phi,\alpha,\theta)$ as described in Lemma
  \ref{l:direction}.  In particular, the segments $[a,a_1]$ and
  $[a,a_2]$ in $E$ must have the same direction at $a$.  The
  log-injectivity radius at any point in Euclidean space is infinite, so these
  geodesic segments
  are subsets of a single line in $E$. 
  As in the proof of Lemma \ref{l:direction}, the geodesics $\gamma_1$
  and $\gamma_2$ both lie in a subset of $\tilde{F}$ isometric to a
  hyperbolic halfplane.  Since a hyperbolic half-plane has infinite
  log-injectivity radius, $\gamma_1$ and $\gamma_2$ must coincide.
\end{proof}
\begin{remark}
  If one is merely interested in nonpositive curvature, and not
  log-injectivity, there is more flexibility in the choice of warping
  functions.  Suppose that $T$ is a flat manifold.  It is not hard to
  show that the space
\[ \tilde{F}' = [0,\infty)\times_{e^r}\E^k\times_{\sinh(r)}T \]
  is CAT$(0)$.  On the other hand, $\tilde{F}'$ does not have
  log-injective neighborhoods at points $(0,e,-)$.
\end{remark}

\subsection{Curvatures in Riemannian warped products}
The following proposition can be proved by a simple if tedious computation involving
Christoffel symbols.  See \cite{BlHo} for a proof in case both flat
factors are $1$--dimensional.
\begin{proposition}\label{p:sectional}
  Let $A_1$ and $A_2$ be flat manifolds, and let $I\subseteq \R$.  Let
  $f_1$, $f_2$ be positive smooth functions on $I$.
  The
  warped product manifold 
  \[ W = I\times_{f_1} A_1 \times_{f_2} A_2 \]
  has sectional curvatures at $(t,a_1,a_2)\in W$ which are convex combinations
  of the following functions, evaluated at $t$:
\[ -\frac{f_1''}{f_1}, -\frac{f_2''}{f_2}, -\frac{(f_1')^2}{f_1^2},
-\frac{(f_2')^2}{f_2^2},\mbox{ and } -\frac{f_1'f_2'}{f_1f_2}\]
  Let $i\in \{1,2\}$.
  If $\dim(A_i)=1$, then the term
  $-\frac{(f_i')^2}{f_i^2}$ can be ignored.  If $\dim(A_i) = 0$, then
  all terms involving $f_i$ and its derivatives can be ignored.
\end{proposition}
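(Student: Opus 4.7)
The plan is a direct computation in an adapted orthonormal frame, organized as two nested applications of the standard O'Neill warped-product curvature formulas. At a point $p=(t,a_1,a_2)\in W$, choose locally parallel orthonormal frames $X_1,\dots,X_{n_1}$ on a neighborhood of $a_1$ in $A_1$ and $Y_1,\dots,Y_{n_2}$ on a neighborhood of $a_2$ in $A_2$; these exist because $A_1$ and $A_2$ are flat. Then $e_0=\partial_t$, $\bar X_i=X_i/f_1$, $\bar Y_j=Y_j/f_2$ is an orthonormal frame on $W$ near $p$.

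Next, view $W=B\times_{f_2}A_2$ with $B=I\times_{f_1}A_1$, and apply the O'Neill formulas at each level. At the inner level $B=I\times_{f_1}A_1$, flatness of $A_1$ yields
\[
K(\partial_t,\bar X_i)=-\frac{f_1''}{f_1},\qquad
K(\bar X_i,\bar X_{i'})=-\frac{(f_1')^2}{f_1^2},
\]
while a short computation gives $\nabla_{\bar X_i}\partial_t=(f_1'/f_1)\bar X_i$ in $B$. At the outer level, since $f_2$ depends only on the $I$-coordinate of $B$ and $A_2$ is flat, the O'Neill formulas for sectional curvatures involving an $A_2$-direction are $-H^{f_2}(X,X)/f_2$ (mixed planes) and $-|\nabla^B f_2|^2/f_2^2$ (fiber planes); plugging in $X\in\{\partial_t,\bar X_i\}$ and using the previous Christoffel calculation yields
\[
K(\partial_t,\bar Y_j)=-\frac{f_2''}{f_2},\quad
K(\bar X_i,\bar Y_j)=-\frac{f_1'f_2'}{f_1 f_2},\quad
K(\bar Y_j,\bar Y_{j'})=-\frac{(f_2')^2}{f_2^2}.
\]
Crucially, the same formulas show that $R(e_a,e_b,e_c,e_d)=0$ unless $\{a,b\}=\{c,d\}$; equivalently, the curvature operator is diagonal in the orthonormal basis $\{e_a\wedge e_b\}_{a<b}$ of $\Lambda^2 T_pW$. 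This diagonality is the linchpin of the statement and is where flatness of the $A_i$ and $t$-dependence of $f_1,f_2$ are essential.

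Finally, for an arbitrary 2-plane $\sigma\subset T_pW$ with orthonormal basis $u=\sum u^a e_a$, $v=\sum v^a e_a$, diagonality yields
\[
K(\sigma)=\langle R(u,v)v,u\rangle=\sum_{a<b}(u^a v^b-u^b v^a)^2\,K(e_a,e_b),
\]
and $\sum_{a<b}(u^a v^b-u^b v^a)^2=|u\wedge v|^2=|u|^2|v|^2-\langle u,v\rangle^2=1$, so $K(\sigma)$ is a convex combination of values each equal to one of the five listed functions of $t$. The degenerate cases are transparent in this frame: if $\dim A_i=1$ there is no pair $i\neq i'$ in that block, so the term $-(f_i')^2/f_i^2$ never arises; if $\dim A_i=0$ that factor contributes no frame vectors and all terms with $f_i$ drop out. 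The main obstacle is simply bookkeeping through the nested O'Neill formulas to verify the diagonality claim; once that is in hand, the convex-combination conclusion is automatic.
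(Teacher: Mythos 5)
Your proof is correct, and it takes a genuinely different (and arguably cleaner) route from what the paper gestures at. The paper offers no proof at all beyond remarking that the result follows ``by a simple if tedious computation involving Christoffel symbols'' and pointing to \cite{BlHo} for the two-dimensional case $n_1=n_2=1$. You instead organize the computation structurally: realize $W$ as a nested warped product $B\times_{f_2}A_2$ with $B=I\times_{f_1}A_1$, invoke the O'Neill curvature formulas at each level, and --- crucially --- observe that in the adapted orthonormal frame $\{\partial_t,\,\bar X_i,\,\bar Y_j\}$ the curvature $(0,4)$-tensor vanishes unless the index pairs coincide, i.e.\ the curvature operator is diagonal on $\Lambda^2 T_pW$. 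That diagonality claim is what really makes the convex-combination statement fall out, and it is worth making explicit (it depends on flatness of the $A_i$, on the $X_i$, $Y_j$ being chosen locally parallel, and on $f_1,f_2$ depending only on $t$, all of which you use when computing the Hessian of $f_2$ on $B$ to be diagonal with entries $f_2''$ and $f_1'f_2'/f_1$). Once diagonality is in hand, the identity $\sum_{a<b}(u^av^b-u^bv^a)^2=|u\wedge v|^2=1$ for an orthonormal pair $u,v$ immediately gives the convex-combination conclusion, and the degenerate cases $\dim A_i\le 1$ are exactly those where certain index pairs do not exist. What your approach buys over a raw Christoffel-symbol computation is that the diagonality step --- which is the real content of the proposition, and which a brute-force curvature computation would obscure or leave implicit --- is isolated and justified cleanly by the O'Neill formulas.
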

It follows from this proposition that if $f_1$ and $f_2$ are positive,
convex, and increasing,
the warped product
$I\times_{f_1}A\times_{f_2}B$ is nonpositively curved.  If $f_1$,
$f_2$, and their first and second derivatives are all bounded between
two positive numbers $\epsilon<1$ and $R>1$, then the curvature of
$I\times_{f_1}A\times_{f_2}B$ is bounded between
$-\frac{R^2}{\epsilon^2}$, and $-\frac{\epsilon^2}{R^2}$.

\subsection{Gluing functions}

We will construct smooth convex increasing 
functions $f$ and $g$ on $[0,1+\lambda]$
interpolating between the exponential function $e^{r-1}$ near
$r=\lambda$, and the functions $\sinh(r)$ and $\cosh(r)$ near $r=0$.
We will apply the following result of Agol, proved in \cite{Ag}.

\begin{lemma}\cite[Lemma 2.5]{Ag}\label{l:agol}
Suppose
\begin{equation*}
a(r)=
    \begin{cases}
        b(r)& r<R ,\\
        c(r) & r\geq R,
    \end{cases}
\end{equation*}
where $b(r)$ and $c(r)$ are $C^\infty$ on $(-\infty,\infty)$, and
$b(R)=c(R)$, $b'(R)=c'(R)$.
Then we may find $C^{\infty}$ functions $a_\epsilon$  on $(-\infty,\infty)$
for $\epsilon>0$ such that
\begin{enumerate}
\item
there is a $\delta(\epsilon)>0$ such that $\underset{\epsilon\to 0^+}{\lim}
\delta(\epsilon)=0$ and $a_\epsilon(r)=b(r)$ for $r\leq R-\delta(e)$, $a_\epsilon(r)=c(r)$
for $r\geq R$, and
\item
$$\min\{b''(R),c''(R)\}=\underset{\epsilon\to 0^+}{\lim} \underset{R-\delta(\epsilon)\leq r \leq R}{\inf} a_\epsilon''(r)$$
$$\leq \underset{\epsilon\to 0^+}{\lim} \underset{R-\delta(\epsilon)\leq r \leq R}{\sup} a_\epsilon''(r)
= \max\{b''(R),c''(R)\}.$$
\end{enumerate}
\end{lemma}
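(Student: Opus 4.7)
The plan is to prescribe the second derivative $h_\epsilon$ of $a_\epsilon$ on $[R-\delta(\epsilon),R]$ and integrate twice, setting $a_\epsilon=b$ on $(-\infty,R-\delta]$ and $a_\epsilon=c$ on $[R,\infty)$. Since $b(R)=c(R)$ and $b'(R)=c'(R)$, the piecewise function $\bar a(r):=b(r)\chi_{r<R}+c(r)\chi_{r\geq R}$ is already $C^1$; the only obstruction to $C^\infty$ is the jump $m:=c''(R)-b''(R)$ at $R$, so my task is to smooth $\bar a''$ in a small neighborhood of $R$ while keeping the resulting $a_\epsilon$ an exact piecewise extension of $b$ on the left and of $c$ on the right.

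Concretely, I choose $h_\epsilon\in C^\infty(\R)$ with $h_\epsilon=b''$ on $(-\infty,R-\delta]$ and $h_\epsilon=c''$ on $[R,\infty)$, and define $a_\epsilon$ on $[R-\delta,R]$ via the two antiderivatives of $h_\epsilon$ with initial data $\bigl(b(R-\delta),b'(R-\delta)\bigr)$. Smoothness at the left seam is automatic from infinite-order contact of $h_\epsilon$ with $b''$ at $R-\delta$, and smoothness at the right seam together with $a_\epsilon=c$ on $[R,\infty)$ is equivalent to infinite-order contact of $h_\epsilon$ with $c''$ at $R$ plus the two moment conditions
\[
\int_{R-\delta}^{R}h_\epsilon(s)\,ds=b'(R)-b'(R-\delta),\qquad\int_{R-\delta}^{R}(R-s)h_\epsilon(s)\,ds=b(R)-b(R-\delta)-\delta\,b'(R-\delta).
\]
Using $b(R)=c(R)$ and $b'(R)=c'(R)$, these two integrals equal the corresponding integrals of $b''$ and also of $c''$. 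Writing $h_\epsilon=b''+k_\epsilon$ with $k_\epsilon$ supported in $[R-\delta,R]$, the problem reduces to producing a smooth bump $k_\epsilon$ which vanishes to all orders at $R-\delta$, has $C^\infty$ contact with $c''-b''$ at $R$, satisfies $\int k_\epsilon=0=\int(R-s)k_\epsilon(s)\,ds$, and whose range lies within $\eta(\epsilon)$ of $[\min\{0,m\},\max\{0,m\}]$ for some $\eta(\epsilon)\to 0$.

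The hard part is this last bump construction. A direct convex combination $a_\epsilon=(1-\chi_\delta)b+\chi_\delta c$ fails: expansion of $a_\epsilon''$ yields an error term $2\chi_\delta'(c'-b')+\chi_\delta''(c-b)$ of order $1$ (because $|c-b|=O(\delta^2)$ while $|\chi_\delta''|=O(\delta^{-2})$), producing an overshoot past $[\min\{b''(R),c''(R)\},\max\{b''(R),c''(R)\}]$ that does not vanish with $\delta$. Instead I assemble $k_\epsilon$ from two pieces: (i) a narrow bump on a sub-interval $[R-\delta_0,R]$ with $\delta_0(\epsilon)\ll\delta(\epsilon)$, of sup-norm close to $|m|$ and sign that of $m$, carrying the Taylor data of $c''-b''$ at $R$; and (ii) a broad low-amplitude compensating piece on $[R-\delta,R-\delta_0]$ with two shape parameters tuned so that the two moments of $k_\epsilon$ vanish. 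Two moments against two free parameters yield a generically non-singular linear system; as $\delta(\epsilon)\to 0$ one arranges both $\delta_0(\epsilon)$ and the compensating amplitude $\eta(\epsilon)$ to tend to $0$, whence $h_\epsilon$ concentrates near the prescribed values and the claimed $\liminf$ and $\limsup$ follow. A standard Borel-type flat gluing handles the $C^\infty$ joining of the two pieces at $R-\delta_0$ and the matching with $b''$ at $R-\delta$.
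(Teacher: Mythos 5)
The paper does not prove this lemma; it is stated as a citation to Agol \cite[Lemma 2.5]{Ag} and used as a black box. So there is no in-paper proof to compare against, and your proposal has to stand on its own.

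Your overall strategy---prescribe $a_\epsilon''$ directly, integrate twice from $R-\delta$, and turn the $C^\infty$ matching at $R$ into two moment conditions plus infinite-order contact---is sound and, in my view, correct. The observation that the naive convex combination $a_\epsilon=(1-\chi_\delta)b+\chi_\delta c$ produces an $O(1)$ error in $a_\epsilon''$ from $\chi_\delta''(c-b)+2\chi_\delta'(c'-b')$ is accurate and explains why one cannot simply interpolate; this is a genuinely useful remark. The two-scale decomposition of the correction $k_\epsilon$ (a narrow piece near $R$ carrying the Taylor data of $c''-b''$, plus a broad low-amplitude piece on $[R-\delta,R-\delta_0]$ satisfying the two moment constraints) does work, and the amplitude analysis is right: with profiles $\psi_1,\psi_2$ chosen so that the pairs $\bigl(\int\psi_i,\int u\psi_i\,du\bigr)$ are linearly independent, the $2\times 2$ system for the compensating amplitudes $(\alpha,\beta)$ has determinant of order $(\delta-\delta_0)^3$ and right-hand side of order $|m|\delta_0$ and $|m|\delta_0^2$, so $(\alpha,\beta)=O(|m|\delta_0/\delta)\to 0$ once $\delta_0/\delta\to 0$. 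This guarantees $\inf a_\epsilon''\to\min\{b''(R),c''(R)\}$ and $\sup a_\epsilon''\to\max\{b''(R),c''(R)\}$.

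Two small corrections. First, your assertion that the two prescribed integrals ``equal the corresponding integrals of $b''$ and also of $c''$'' is wrong in its second half: $\int_{R-\delta}^R c''=c'(R)-c'(R-\delta)$, which differs from $b'(R)-b'(R-\delta)$ unless $b'$ and $c'$ happen to agree at $R-\delta$. Only the identification with the integrals of $b''$ is correct, and fortunately that is the only one your argument actually uses (to reduce to $\int k_\epsilon=0=\int(R-s)k_\epsilon\,ds$). Second, it would strengthen the write-up to note explicitly that the compensating piece cannot be taken of a single sign---a one-signed $g\le 0$ forces $\int(R-s)g/\int g\in[\delta_0,\delta]$, while matching the narrow bump's moments demands a ratio of roughly $\delta_0/2$---so the two-parameter, possibly sign-changing family is genuinely necessary, not merely convenient. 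With those clarifications the proof is complete.
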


Agol's lemma allows us to smoothly interpolate between functions which
agree up to first order at some point.  We wish to interpolate between
pairs of 
functions (either $\sinh(r)$ and $e^{r-1}$  or $\cosh(r)$ and
$e^{r-1}$) which do not agree anywhere up to first order.  To solve
this difficulty, we introduce a third function which interpolates
between the pair:
\begin{lemma}\label{l:interp}
  Let $l_1(x) = ax+b$, $l_2(x)=cx+d$.  Suppose $a < c$, and let $[A,B]$
  be an interval containing $\frac{b-d}{c-a}$ (where the graphs of $l_1$
  and $l_2$ meet). There is a $k>0$ and a smooth function $\varepsilon$ on $[A,B]$
  so that:
\begin{enumerate}
\item $\varepsilon'(A) = a$,
\item $\varepsilon'(B) = c$, and
\item $\varepsilon''(t) > k$ on $[A,B]$.
\end{enumerate}
\end{lemma}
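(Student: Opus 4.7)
The plan is to construct $\varepsilon$ explicitly as an antiderivative of a smooth interpolating function for the slopes. Since the only conditions in the statement are on $\varepsilon'$ and $\varepsilon''$, the constant of integration is irrelevant, and I expect no real obstacle: the hypothesis about $[A,B]$ containing $\frac{b-d}{c-a}$ plays no role in producing $\varepsilon$ itself (it is presumably there so that the eventual application — gluing across the two intersection points with $l_1$ and $l_2$ via Lemma \ref{l:agol} — makes geometric sense).

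First, I would define the derivative to be affine:
\[
\varepsilon'(t) = a + \frac{(c-a)(t-A)}{B-A},
\]
which is smooth on $[A,B]$ and satisfies $\varepsilon'(A)=a$ and $\varepsilon'(B)=c$ directly from the endpoints. Since $a<c$ and $B>A$, this function is strictly increasing with constant derivative
\[
\varepsilon''(t) = \frac{c-a}{B-A} > 0.
\]
Setting $k$ to be any positive number strictly smaller than $\frac{c-a}{B-A}$, for instance $k=\frac{c-a}{2(B-A)}$, gives $\varepsilon''(t)>k$ uniformly on $[A,B]$.

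Finally, I would set
\[
\varepsilon(t) = at + \frac{c-a}{2(B-A)}(t-A)^2,
\]
(or any other antiderivative), which is manifestly smooth on all of $\R$ and in particular on $[A,B]$, and verifies the three required conditions. This completes the proof; no difficult step is anticipated.
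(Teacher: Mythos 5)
Your quadratic construction does verify the three listed conclusions, and is certainly simpler than the paper's approach (which applies an affine map to a circle to produce an arc of an ellipse tangent to the two lines at the desired points). You are also right that, for those three conclusions alone, the hypothesis $\frac{b-d}{c-a}\in[A,B]$ is superfluous. But this exposes an under-specification in the lemma rather than a genuinely redundant hypothesis: the proof of Proposition~\ref{p:warpfg} needs more than what the lemma literally promises. There $\varepsilon_f$ is spliced between $\sinh(r)$ for $r\leq\delta_0$ and $e^{r-1}$ for $r\geq 1+\lambda/2$, and for the assembled function $f_0$ to be even continuous (let alone $C^1$) one needs $\varepsilon_f(\delta_0)=\sinh(\delta_0)$ and $\varepsilon_f(1+\lambda/2)=e^{\lambda/2}$; in the lemma's notation, $\varepsilon(A)=l_1(A)$ and $\varepsilon(B)=l_2(B)$. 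The paper's ellipse is tangent to the graph of $l_1$ at $(A,l_1(A))$ and to the graph of $l_2$ at $(B,l_2(B))$, so these extra boundary conditions hold automatically. Your quadratic has only a single free constant of integration, and imposing both $\varepsilon(A)=l_1(A)$ and $\varepsilon(B)=l_2(B)$ forces $\frac{b-d}{c-a}=\frac{A+B}{2}$, which is generally false. A version of your idea can be rescued: take $\varepsilon'$ to be any smooth strictly increasing function interpolating between $a$ at $A$ and $c$ at $B$, and shape it so that $\int_A^B\varepsilon'=l_2(B)-l_1(A)$; one checks this integral constraint is achievable exactly when $\frac{b-d}{c-a}$ lies in the interior of $[A,B]$. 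So the hypothesis you discarded is the one that makes the construction possible once the implicitly-needed boundary values are put back in.
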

\begin{proof}
  One way to do this is to start with the circle 
  \[S = \{(x,y)\mid(x-1)^2+(y-1)^2=1\},\]
  and let $M\co \R^2\to \R^2$ be the unique affine map taking $(0,1)$
  to $(A,l_1(A))$, $(1,0)$ to $(B,l_2(B))$, and $(0,0)$ to the point
  of
  intersection of the graphs of $l_1$ and $l_2$.  The resulting
  ellipse $M(S)$ is tangent to the graph of
  $l_1$ at $(A,l_1(A))$, and to the graph of $l_2$ at $(B,l_2(B))$.
  The part of the ellipse between these two points and closest to the
  union of the graphs of $l_1$ and $l_2$ is the graph of a function
  $\varepsilon$ satisfying the requirements of the lemma.  See Figure
  \ref{fig:affine} for an illustration.
\begin{figure}[htbp]
\begin{center}
\input{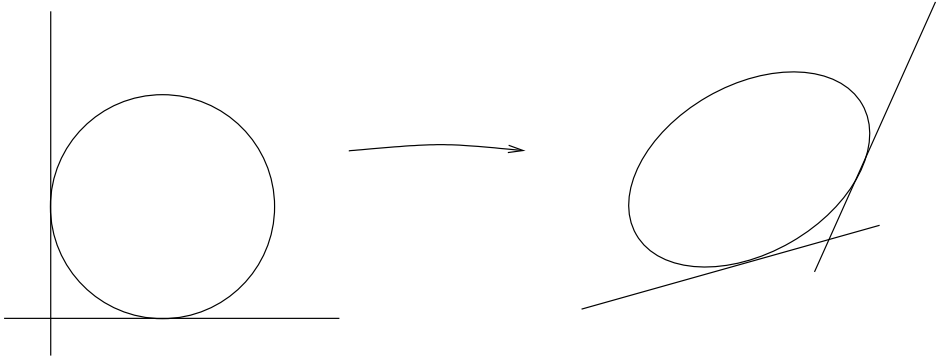}\label{fig:affine}
\caption{The curve on the right closest to the lines is a $C^1$
  interpolation between the lines.}
\end{center}
\end{figure}

\end{proof}
\begin{proposition}\label{p:warpfg}
  For any $\lambda>0$ there is a $\delta>0$, $k>0$, and a pair of smooth
  functions $f\co \R_+\to \R_+$ and $g\co \R_+\to \R_+$ satisfying:
\begin{enumerate}
\item for $r<\delta$, $f(r) = \sinh(r)$ and $g(r) = \cosh(r)$,
\item for $r>1+\frac{\lambda}{2}$, $f(r) = g(r) = e^{r-1}$, and 
\item for $r>0$, $f''(r)>0$, and for $r\in \R_+$, $g''(r)>k$.
\end{enumerate}
\end{proposition}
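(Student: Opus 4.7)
The overall plan is to build each of $f$ and $g$ piecewise in three parts (the hyperbolic trig function near $0$, an interpolating piece in the middle, and $e^{r-1}$ at the right), matching values and slopes at the gluing points using tangent lines, then smoothing the second-derivative discontinuities via Agol's lemma. The functions $\sinh(r)$ and $e^{r-1}$ (resp.\ $\cosh(r)$ and $e^{r-1}$) are not first-order tangent anywhere, so one cannot directly glue them using Lemma \ref{l:agol}; Lemma \ref{l:interp} is exactly the bridge we need, since it produces a convex function with uniformly positive second derivative whose one-sided derivatives agree with prescribed affine slopes.

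For $g$, I would fix small $R_1 > 0$ and $R_2 \in (R_1, 1+\lambda/2)$, let $l_1$ be the tangent line to $\cosh(r)$ at $R_1$ (slope $\sinh(R_1)$, passing through $(R_1,\cosh(R_1))$) and $l_2$ the tangent line to $e^{r-1}$ at $R_2$ (slope $e^{R_2-1}$). Because $\sinh(R_1)<e^{R_2-1}$ when $R_1$ is small and $R_2$ is not too small, and because $\cosh$ lies below its tangent above a convex function that lies above $l_1$ on the right while $e^{r-1}$ lies above $l_2$ on the left, one checks that the intersection of $l_1$ and $l_2$ sits inside $[R_1,R_2]$, so Lemma \ref{l:interp} applies on $[A,B]=[R_1,R_2]$ and produces $\varepsilon_g$ with $\varepsilon_g(R_1)=\cosh(R_1)$, $\varepsilon_g'(R_1)=\sinh(R_1)$, $\varepsilon_g(R_2)=e^{R_2-1}$, $\varepsilon_g'(R_2)=e^{R_2-1}$, and $\varepsilon_g''>k$ on $[R_1,R_2]$. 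Define
\[
\tilde g(r)=\begin{cases}\cosh(r)&r\le R_1\\ \varepsilon_g(r)&R_1\le r\le R_2\\ e^{r-1}&r\ge R_2.\end{cases}
\]
Then $\tilde g$ is $C^1$ by the construction of Lemma \ref{l:interp}, and $\tilde g''$ is bounded below by $\min\{1,k,e^{R_2-1}\}>0$ wherever it exists. Applying Lemma \ref{l:agol} at $R_1$ (with $b=\cosh$, $c=\varepsilon_g$) and at $R_2$ (with $b=\varepsilon_g$, $c=e^{r-1}$) smooths $\tilde g$ to a $C^\infty$ function $g$; the inequalities in part (2) of Lemma \ref{l:agol} show that the new second derivatives in the two tiny splicing intervals lie between the original two positive second-derivative values, so $g''>k'$ globally for some possibly smaller $k'>0$. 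Shrinking $\delta$ so that the left splicing interval $[R_1-\delta(\epsilon),R_1]$ lies above $\delta$ preserves condition (1), and shrinking $\epsilon$ further places the right splicing interval inside $(R_2,1+\lambda/2)$, preserving condition (2).

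The construction of $f$ is identical with $\sinh$ in place of $\cosh$; the only formal difference is that $\sinh''(0)=0$, so we can only assert $f''>0$ on $(0,\infty)$ rather than a uniform positive lower bound. Since $R_1>0$ and $\sinh''(r)=\sinh(r)>0$ on $[R_1,\infty)$, the minimum over the first splice of $\{\sinh(R_1),\varepsilon_f''(R_1)\}$ is still positive, and the rest of the argument (and the final choice of $\delta$ and $\epsilon$) goes through verbatim.

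The main obstacle is the geometric bookkeeping in Step~2: ensuring that the intersection of the two tangent lines lies inside $[R_1,R_2]$ so that Lemma \ref{l:interp} applies, and then checking that after the two Agol smoothings the final functions still satisfy $f(r)=\sinh(r)$, $g(r)=\cosh(r)$ on $[0,\delta)$ and $f=g=e^{r-1}$ on $(1+\lambda/2,\infty)$. Both are straightforward once one chooses $R_1$ well to the right of $\delta$, $R_2$ well to the left of $1+\lambda/2$, and takes the smoothing parameter $\epsilon$ small enough that the intervals $[R_i-\delta(\epsilon),R_i]$ stay comfortably inside $(\delta,1+\lambda/2)$.
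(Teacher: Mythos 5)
Your proposal follows essentially the same route as the paper: piece together the hyperbolic trig function, an interpolating convex arc from Lemma \ref{l:interp}, and $e^{r-1}$, then remove the corner discontinuities in the second derivative with Agol's Lemma \ref{l:agol}. However, there is a genuine gap in the one place you flag as ``bookkeeping,'' and it interacts badly with another choice you make.

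You claim the intersection of the two tangent lines $l_1$ (to $\cosh$ at $R_1$) and $l_2$ (to $e^{r-1}$ at $R_2$) lies in $[R_1,R_2]$ provided ``$R_1$ is small and $R_2$ is not too small,'' and later you suggest choosing ``$R_2$ well to the left of $1+\frac{\lambda}{2}$.'' These cannot both hold when $\lambda$ is small. Sending $R_1\to 0$ makes $l_1$ approach the horizontal line $y=1$, so the requirement $l_1(R_2)\le l_2(R_2)=e^{R_2-1}$ forces $e^{R_2-1}\gtrsim 1$, i.e.\ $R_2\gtrsim 1$. For small $\lambda$ the window $(1,1+\frac{\lambda}{2})$ is tiny, so $R_2$ is necessarily \emph{very close} to $1+\frac{\lambda}{2}$, not ``well to the left.'' Your convexity remark (``$\cosh$ lies below its tangent above a convex function\ldots'') does not deliver the needed inequality; convexity gives the direction $l_1(R_1)\ge l_2(R_1)$ easily (since $l_2(R_1)\le e^{R_1-1}<1\le\cosh(R_1)$), but it gives nothing for the other endpoint because $\cosh(R_2)$ can easily exceed $e^{R_2-1}$.

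The paper sidesteps this entirely by taking $R_2 = 1+\frac{\lambda}{2}$ exactly, i.e.\ $l_2$ is the tangent line to $e^{r-1}$ at the right endpoint, and then explicitly verifies the crossing using $e^{\lambda/2} > 1+\frac{\lambda}{2}$. This is legitimate because Agol's lemma \ref{l:agol} modifies the function only on $[R-\delta(\epsilon),R]$ to the \emph{left} of the gluing point $R$, so setting the right gluing point at $1+\frac{\lambda}{2}$ still yields $f(r)=g(r)=e^{r-1}$ for $r>1+\frac{\lambda}{2}$. Your comment that ``shrinking $\epsilon$ further places the right splicing interval inside $(R_2,1+\frac{\lambda}{2})$'' suggests you thought the splice occurs to the \emph{right} of $R_2$, which seems to be why you pulled $R_2$ strictly inside; once you notice the splicing is one-sided to the left, there is no reason not to take $R_2=1+\frac{\lambda}{2}$, and the tight-window problem disappears.
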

\begin{proof}
  We first define $C^1$ functions $f_0$ and $g_0$
  with the above properties.  We
  then invoke Agol's gluing lemma (Lemma \ref{l:agol}) to obtain
  smooth functions.  

  Let $l_2$ be the equation of the tangent line to $h(r) = e^{r-1}$ at
  $r=1+\frac{\lambda}{2}$, and note that $l_2(\frac{\lambda}{2})=0$.
  Since
  $h(1+\frac{\lambda}{2}) =e^{\frac{\lambda}{2}}>1+\frac{\lambda}{2}$, the line $l_2$
  intersects the tangent lines at $0$ to both $\cosh(r)$ and
  $\sinh(r)$ somewhere in the interval $(0,1+\frac{\lambda}{2})$.  By
  continuity, there is some $\delta_0>0$ so that the tangent lines to
  $\cosh(r)$ and $\sinh(r)$ at $r=\delta_0$ still hit $l_2$ somewhere
  in the interval $(0,1+\frac{\lambda}{2})$.

  In particular, if $l_1$ is the tangent 
  line to $\sinh(r)$ at $r=\delta_0$, 
  we may apply Lemma
  \ref{l:interp} to obtain a function $\varepsilon_f$ on
  $(\delta_0,1+\frac{\lambda}{2})$ with $\varepsilon_f'(\delta_0) =
  \cosh(\delta_0)$, $\varepsilon_f'(1+\frac{\lambda}{2}) = h'(1+\frac{\lambda}{2})$, and
  $\varepsilon_f''>k_f$ everywhere, for some $k_f>0$.  We define
  \[ f_0(r) = 
  \begin{cases}
    \sinh(r) & r\leq \delta_0\\
    \varepsilon_f(r) & \delta_0<r<1+\frac{\lambda}{2}\\
    e^{r-1} & r\geq 1+\frac{\lambda}{2}
  \end{cases}  
  \]
  (see Figure \ref{f:interp}).
  \begin{figure}[htbp]
    \begin{center}
      \input{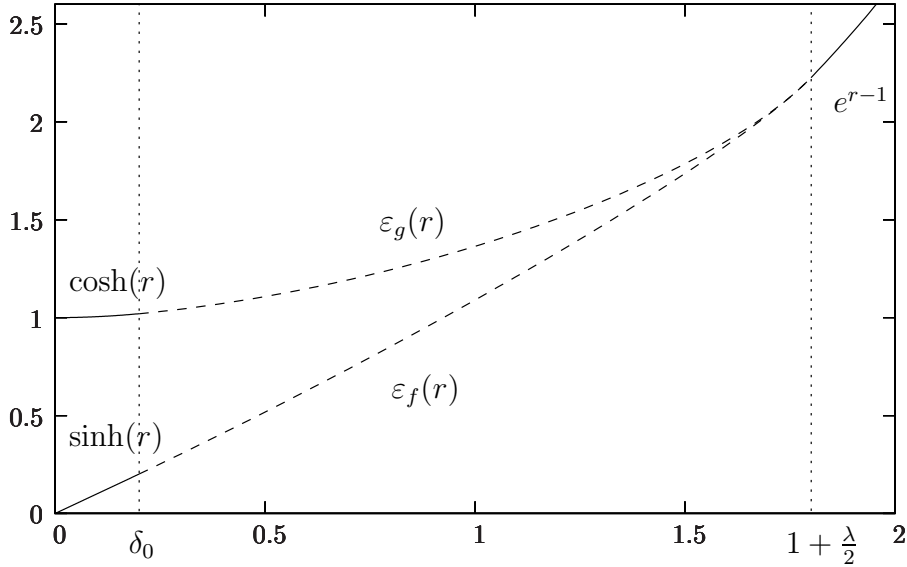}
      \caption{The bottom curve is $f_0$, and the top curve is $g_0$,
        for $\lambda=1.6$ and $\delta_0=0.2$.}
      \label{f:interp}
    \end{center}
  \end{figure}
  Choosing some positive $\epsilon<\min\{\delta_0,\frac{\lambda}{2}\}$, we may 
  twice apply Lemma \ref{l:agol} (at $r=\delta_0$ and at $r=1+\frac{\lambda}{2}$)
  to
  obtain a smooth function $f\co \R_+\to\R_+$ satisfying:
  \begin{enumerate}
  \item $f(r) = f_0(r)$ outside the intervals
    $(\delta_0-\epsilon,\delta_0)$ and $(1+\frac{\lambda}{2}-\epsilon,1+\frac{\lambda}{2})$,
  \item $f''(r)> 0.9 \min\{\sinh(\delta_0),
    \varepsilon_f''(\delta_0)\}$ on $(\delta_0-\epsilon,\delta_0)$,
    and
  \item $f''(r)> 0.9 \min\{\varepsilon_f''(1+\frac{\lambda}{2}),e^{\frac{\lambda}{2}}\}$ on
    $(1+\frac{\lambda}{2}-\epsilon, 1+\frac{\lambda}{2})$.
  \end{enumerate}
  (Here $0.9$ can be replaced with any number less than $1$.)

  A similar argument can be used to construct $g$.   Lemma
  \ref{l:interp}  can be used to
  construct a function $\varepsilon_g$ on the interval
  $(\delta_0,1+\frac{\lambda}{2})$ with $\varepsilon_g'(\delta_0) =
  \sinh(\delta_0)$, $\varepsilon_g'(1+\frac{\lambda}{2}) = e^{\frac{\lambda}{2}}$, and
  $\varepsilon_g''>k_g$ everywhere.  We can thus assemble a $C^1$
  function:
  \[ g_0(r) = 
  \begin{cases}
    \cosh(r) & r\leq \delta_0\\
    \varepsilon_g(r) & \delta_0<r<1+\frac{\lambda}{2}\\
    e^{r-1} & r\geq 1+\frac{\lambda}{2}.
  \end{cases} \]
  Applying Lemma \ref{l:agol} twice to $g_0$ and using the same
  $\epsilon$ as above
  we obtain a function $g$ satisfying:
  \begin{enumerate}
  \item $g(r) = g_0(r)$ outside the intervals
    $(\delta_0-\epsilon,\delta_0)$ and $(1+\frac{\lambda}{2}-\epsilon,1+\frac{\lambda}{2})$,
  \item $g''(r)> 0.9 \min\{\cosh(\delta_0),
    \varepsilon_g''(\delta_0)\}$ on $(\delta_0-\epsilon,\delta_0)$,
    and
  \item $g''(r)> 0.9 \min\{\varepsilon_g''(1+\frac{\lambda}{2}),e^{1+\frac{\lambda}{2}}\}$ on
    $(1+\frac{\lambda}{2}-\epsilon, 1+\frac{\lambda}{2})$.
  \end{enumerate}
  The functions $f$ and $g$ satisfy the conclusion of the proposition for
  $\delta = \delta_0 - \epsilon$, and $k = \min\{0.9 k_g,1\}$.
\end{proof}
\begin{theorem}\label{t:nonpos}
  Let $f$ and $g$ be as in Proposition \ref{p:warpfg},  and let $T$ be
  flat manifold with injectivity radius bigger than $\pi$.  The warped product
  \[ Z = [0,1+\lambda]\times_{g(r)}\E^k\times_{f(r)}T \]
  is Riemannian away from $\{0\}\times \E^k\times T$.  The space $Z$
  is complete and CAT$(0)$, and is CAT$(-\kappa)$ for some $\kappa>0$ if $k\leq 1$.
\end{theorem}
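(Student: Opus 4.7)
The plan is to verify that $Z$ is complete, simply connected, and locally CAT$(0)$ --- and moreover locally CAT$(-\kappa)$ for some $\kappa>0$ when $k\le 1$ --- and then to invoke the Cartan--Hadamard theorem recalled in Section~\ref{s:prelim} to obtain the global curvature bound. Because $g>0$ everywhere on $[0,1+\lambda]$ and $f$ vanishes only at $r=0$, with $f,g$ smooth, the space $Z$ carries a smooth Riemannian metric on $\{r>0\}\times\E^k\times T$, while the $T$-fibre collapses to a point over each $(0,a)\in\{0\}\times\E^k$.

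At a Riemannian point, Proposition~\ref{p:sectional} expresses the sectional curvatures as convex combinations of the five functions $-g''/g$, $-f''/f$, $-(g')^2/g^2$, $-(f')^2/f^2$, $-f'g'/(fg)$ evaluated at $r$. Proposition~\ref{p:warpfg} guarantees $f',g'\ge 0$ together with $f''>0$ and $g''>k>0$, so all five expressions are nonpositive, giving local CAT$(0)$ at every Riemannian point. For the CAT$(-\kappa)$ refinement the only possibly vanishing term is $-(g')^2/g^2$ (since $g'(0)=0$), and this is precisely the term that the convention at the end of Proposition~\ref{p:sectional} discards when $\dim\E^k\le 1$. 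The remaining terms admit a uniform negative upper bound on $(0,1+\lambda]$: on $[0,\delta]$ one uses the explicit formulas $f=\sinh$, $g=\cosh$; on $[1+\lambda/2,1+\lambda]$ one uses $f=g=e^{r-1}$; and on the interpolation region $f,g,f',g',f'',g''$ are positive continuous functions on a compact interval with $g''>k$, so each relevant ratio is bounded above by a negative constant. Hence when $k\le 1$ we obtain local CAT$(-\kappa_0)$ at Riemannian points for some $\kappa_0>0$.

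At a singular point $(0,a,-)$, the warping functions agree with $\sinh$ and $\cosh$ on $[0,\delta)$, so any ball of radius less than $\delta$ about this point in $Z$ is isometric to the corresponding ball in $\tilde F=[0,\infty)\times_{\cosh r}\E^k\times_{\sinh r}T$, which by Corollary~\ref{c:singular} is CAT$(0)$, and CAT$(-1)$ when $k\le 1$. Because such a ball is convex inside $\tilde F$, it inherits the CAT bound, and hence so does the corresponding ball in $Z$. Setting $\kappa=\min\{\kappa_0,1\}$, this completes the local CAT verification at all points.

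Completeness follows as in Lemma~\ref{l:completion}: the warping functions are bounded on the compact base $[0,1+\lambda]$, and $\E^k$ and $T$ are complete. Simple connectivity is immediate from the deformation retraction $H_s(r,a,\theta)=(sr,a,\theta)$ of $Z$ onto $\{0\}\times\E^k$, the continuity of $H_0$ into the quotient using exactly that the $T$-coordinate is already collapsed at $r=0$. Cartan--Hadamard then gives that $Z$ is CAT$(0)$, and CAT$(-\kappa)$ when $k\le 1$. The most delicate step is the uniform negative bound on the interpolation region, which depends essentially on the strict lower bound $g''>k$ of Proposition~\ref{p:warpfg} together with the case restriction $k\le 1$ needed to discard the vanishing term $-(g')^2/g^2$.
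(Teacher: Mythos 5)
Your proof is correct and follows essentially the same strategy as the paper: verify the local CAT bounds separately near and away from the singular set $\{r=0\}$, then conclude globally via Cartan--Hadamard. The one stylistic difference is in how the domain is split for the Riemannian curvature estimate. The paper restricts that estimate to $[\delta,1+\lambda]$, a compact interval bounded away from the singularity, where all five quantities $f''/f$, $g''/g$, $(f')^2/f^2$, $(g')^2/g^2$, $f'g'/(fg)$ are simultaneously bounded below by a single positive constant (for every $k$), and relies entirely on the $\tilde F$--comparison for $r<\delta$; the case split on $k$ therefore enters only through whether $\tilde F$ is CAT$(-1)$ or merely CAT$(0)$. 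You instead run the Riemannian estimate on all of $(0,1+\lambda]$, which forces you to confront the fact that $(g')^2/g^2\to 0$ as $r\to 0^+$ and to invoke the dimension convention of Proposition~\ref{p:sectional} to discard that term when $k\le 1$; this works but makes the $k\le 1$ hypothesis play a role in two separate places (both the Riemannian estimate and the $\tilde F$ model) rather than one. Both arguments are sound; the paper's decomposition is cleaner because the compact interval $[\delta,1+\lambda]$ renders the degeneration at $r=0$ invisible to the Riemannian estimate, while your $\tilde F$--comparison already covers a $\delta$--collar of the singular set and so makes the extension of the Riemannian bound to $(0,\delta)$ redundant.
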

\begin{proof}
  Since $g(r)$ and $f(r)$ are positive and smooth for $r>0$, the space
  $Z$ is Riemannian away from $\Xi = \{0\}\times \E^k\times T$.  Since
  $f(0)=0$, the space $Z$ is simply connected, so it suffices to check
  the curvature locally.  Near the singular set $\Xi$, the $Z$ is locally
  isometric to a subset of the space $\tilde{F}$ described in
  Corollary \ref{c:singular}.  It follows that $Z$ is locally CAT$(0)$
  (and locally CAT$(-1)$ if $k\leq 1$)
  in a $\delta$--neighborhood of $\Xi$.

  Away from $\Xi$, we may estimate derivatives, and
  apply Proposition \ref{p:sectional}.  Since $f$, $g$, and their
  first and second derivatives are all positive and continuous on the
  interval $[\delta,1+\lambda]$, there is a positive lower bound
  $\kappa$ valid
  for all 
  the quantities
 \[ \frac{f''}{f}, \frac{g''}{g}, \frac{(f')^2}{f^2},
 \frac{(g')^2}{g^2},\mbox{ and } \frac{f'g'}{fg} \]
  on this interval.  Proposition \ref{p:sectional} implies that the
  sectional curvatures are thus bounded above by $-\kappa$, away from
  a $\delta$--neighborhood of $\Xi$.

  Putting the local pictures together, $Z$ is locally (and hence
  globally) CAT$(-\kappa)$ if $k\leq 1$, and CAT$(0)$ otherwise.
\end{proof}

\subsection{Nonpositively curved metrics on the partial cones}
If $M$ is a hyperbolic manifold, and $E\subset M$ is a closed horospherical
neighborhood of a toral cusp, then $E$ is isometric to a warped
product
\[ E = (-\infty,0]\times_{e^r}N, \]
where $N = \partial E$ with the induced flat Riemannian metric.  
(To see this, recall that if we identify $\H^{n+1}$ with
$\R\times_{e^r}\E^n$, then sets of the form $\{R\}\times \E^n$ are
concentric horospheres.  Any discrete group $P$
of parabolic isometries of
$\H^{n+1}$ can be realized as a group preserving these horospheres.
The quotient by the action is then of the form $\R\times_{e^r}F$ for
some flat manifold $F$.  If $P$ is a maximal parabolic subgroup of a
torsion-free
lattice $\Gamma$, some subset of the form $(-\infty,t]\times_{e^r}F$ embeds
in $\H^{n+1}/\Gamma$
By rescaling $F$ to another flat manifold $F'$, this subset looks like
$(-\infty,0]\times_{e^r}F'$.)

If
$E$ is embedded in $M$, then there is some $\lambda>0$ so that
$E_\lambda$, the closed
$\lambda$--neighborhood of $E$, is still embedded.  We have
\[ E_\lambda = (-\infty,\lambda]\times_{e^r}N. \]
It is convenient to reparameterize $E_\lambda$ as
\[ E_\lambda = (-\infty,1+\lambda]\times_{e^{r-1}}N. \]
Corresponding to any geodesic submanifold $T$ there is a cover $\tilde{N}$
of $N$, isometric to $\E^k\times T$, where $k$ is the codimension of
$T$ in $N$.  Likewise, there is a cover $\tilde{E}_\lambda$ of
$E_\lambda$ corresponding to $T$, so that
\begin{eqnarray*}
\tilde{E}_\lambda & = & (-\infty,1+\lambda]\times_{e^{r-1}}\tilde{N}\\
& = & (-\infty,1+\lambda]\times_{e^{r-1}}\E^k\times_{e^{r-1}} T .
\end{eqnarray*}
The group $Q = \pi_1(N)/\pi_1(T)$ acts, preserving the product
structure, by isometries on 
$\tilde{E}_\lambda$, with quotient $E_\lambda$.  If the warping
functions are replaced by any other warping functions,
$\pi_1(N)/\pi_1(T)$ will still act by isometries.  We will use the
functions $f$ and $g$ defined in Proposition \ref{p:warpfg} to define
a new metric, first on
\[ \tilde{W} = [0,1+\lambda]\times \E^k\times T \]
and then on the space $W=\tilde{W}/Q$, which is homeomorphic to $C(N,T)$.
\begin{theorem}\label{t:metricplug}
  Let $M$ be a hyperbolic $(n+1)$--manifold, with an embedded toral cusp 
  \[ E \cong (-\infty,0]\times_{e^r}N, \]
  and suppose $T\subseteq N$ is a totally geodesic
  $(n-k)$--dimensional torus with
  injectivity radius greater than $\pi$.  Choose $\lambda>0$ so that
  the $\lambda$--neighborhood of $E$ is still embedded in $M$, with
  boundary $N_\lambda$.  
  
  There is then a nonpositively curved metric on the partial cone $C(N,T)$ so that the
  following hold:
  \begin{enumerate}
  \item\label{mplug1} $C(N,T)$ is Riemannian away from its core.
  \item\label{mplug2} There is a Riemannian isometry between the
    $\frac{\lambda}{2}$--neighborhood of $\partial C(N,T)$, and the
    $\frac{\lambda}{2}$--neighborhood of $N_\lambda$ in $C_\lambda$,
    which takes $T\subset C(N,T)$ to a torus $T_\lambda\subset
    C_\lambda$, isotopic in $C_\lambda$ to $T$.
  \item\label{mplug3} If $k\leq 1$, then $C(N,T)$ is locally CAT$(-\kappa)$ for some
    $\kappa>0$. 
  \end{enumerate}
\end{theorem}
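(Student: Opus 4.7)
The strategy is to package together the preceding constructions. Define
$$\tilde W := [0,1+\lambda]\times_{g(r)}\E^k\times_{f(r)}T$$
with $f,g$ the functions from Proposition \ref{p:warpfg}, and set $W:=\tilde W/Q$ where $Q=\pi_1(N)/\pi_1(T)$ acts on the second and third factors via its action on the flat torus $\tilde N = \E^k\times T$ discussed just before the theorem statement. Since $Q$ acts by isometries on $\E^k\times T$ (translating $\E^k$ and preserving $T$) and preserves the $r$-coordinate, it acts by isometries on $\tilde W$ for any choice of warping functions, so the quotient is a well-defined length space. By Theorem \ref{t:nonpos}, $\tilde W$ is complete and CAT$(0)$, and CAT$(-\kappa)$ when $k\leq 1$; these bounds descend to $W$ because the $Q$-action is free, properly discontinuous, and by isometries. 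This yields \eqref{mplug3} together with the CAT$(0)$ part of the conclusion, and \eqref{mplug1} because $f(r),g(r)>0$ for $r>0$ makes $\tilde W$ Riemannian away from $\{0\}\times\E^k\times T$.

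Next I identify $W$ with the partial cone $C(N,T)$ topologically. The condition $f(0)=0$ collapses each fiber $\{0\}\times\E^k\times\{t\}$ to a point in $\tilde W$, so after reversing and rescaling $r\mapsto (1+\lambda-r)/(1+\lambda)$, $\tilde W$ becomes the mapping cylinder of the linear projection $\tilde N\to \tilde N/\E^k \cong T$ from Definition \ref{def.cone}. Quotienting by $Q$ turns this into the mapping cylinder of the induced projection $N\to B$, which is precisely $C(N,T)$. Under this identification, the boundary $\partial C(N,T)$ corresponds to $\{1+\lambda\}\times\tilde N/Q$, and the core $V(N,T)$ to $\{0\}\times\E^k\times T/Q$.

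For \eqref{mplug2} the decisive point is that by construction $f(r)=g(r)=e^{r-1}$ on the interval $r\in(1+\lambda/2,1+\lambda]$. On this collar the doubly warped metric collapses to a singly warped product $(1+\lambda/2,1+\lambda]\times_{e^{r-1}}N$, which under the reparameterization of the cusp used just before the theorem is exactly the restriction of the hyperbolic metric to the $\lambda/2$-collar of $N_\lambda$ inside $E_\lambda$. The required Riemannian isometry is then literally the identity map on this collar, and the geodesic torus $T$ (sitting inside $C(N,T)$ as the $T$-factor at the boundary level) maps to a horospherical copy $T_\lambda$ of $T$ at the appropriate height, manifestly isotopic to the original $T\subset N$ via the straight-line isotopy through parallel horospheres. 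The main genuinely nontrivial ingredients — the curvature bounds near the singular core via the local isometry with the model space $\tilde F$ of Corollary \ref{c:singular}, and the sectional-curvature estimate in the transition region $[\delta,1+\lambda/2]$ — have already been established in Theorem \ref{t:nonpos}, so the remaining work is bookkeeping: checking that the $Q$-action is free on $\tilde W$ (which follows from torsion-freeness of $\pi_1(N)$ and the fact that $Q$ preserves the $r$-coordinate), and that the topological identification $W\cong C(N,T)$ above carries the boundary collar of $W$ onto the mapping-cylinder collar of $C(N,T)$.
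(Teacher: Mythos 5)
Your proof takes the same route as the paper's: form $\tilde W = [0,1+\lambda]\times_{g(r)}\E^k\times_{f(r)}T$, quote Theorem \ref{t:nonpos} for the curvature bounds, quotient by $Q=\pi_1(N)/\pi_1(T)$ to get $W\cong C(N,T)$, and use the fact that $f(r)=g(r)=e^{r-1}$ on $(1+\frac{\lambda}{2},1+\lambda]$ to produce the boundary collar isometry. That is exactly the paper's argument.

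There is, however, one genuine slip in the topological identification $W\cong C(N,T)$, where you reverse the roles of the two flat factors. In the warped product $[0,1+\lambda]\times_{g(r)}\E^k\times_{f(r)}T$, the function $f$ warps the $T$-factor, so $f(0)=0$ collapses each slice $\{(0,e)\}\times T$ (with $e\in\E^k$) to a point --- not each $\{0\}\times\E^k\times\{t\}$ as you wrote. The resulting core is therefore $\{0\}\times\E^k$, which descends to a $k$-torus; this is consistent with the requirement that $V(N,T)$ have dimension $\dim N - \dim T = k$. Correspondingly, $\tilde W$ is the mapping cylinder of the projection $\tilde N\to\tilde N/T\cong\E^k$ (collapsing the $T$-direction), which covers $\pi:N\to B$ from Definition \ref{def.cone}, whose fibers are parallel copies of $T$; your version $\tilde N\to\tilde N/\E^k\cong T$ would give a core of dimension $n-k$ rather than $k$. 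None of this affects the curvature or boundary-collar argument, so the proof is easily repaired, but as written the identification contradicts the definitions of $C(N,T)$ and its core.

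A smaller point: it is worth stating explicitly (as the paper does) that the boundary-collar isometry is first observed at the level of the covers $\tilde W$ and $\tilde E_\lambda = (-\infty,1+\lambda]\times_{e^{r-1}}\E^k\times_{e^{r-1}}T$ and then descends because it is $Q$-equivariant; that is what guarantees the identification at the level of $W$ and $E_\lambda$ is well defined.
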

\begin{proof}
  Corresponding to $T\subseteq N$ there is a cover $\tilde{N}\to N$ so
  that $\tilde{N}$ is isometric to $\E^k\times N$.  Likewise, there is
  a corresponding cover of $E_\lambda$,
  which we parametrize as
  \[\tilde{E}_\lambda =
  (-\infty,1+\lambda]\times_{e^{r-1}}\E^k\times_{e^{r-1}}T.\]
  Let $f$ and $g$ be the functions from Proposition \ref{p:warpfg},
  and let
  \[ \tilde{W} = [0,1+\lambda]\times_{g(r)}\E^k\times_{f(r)}T \]
  By Theorem \ref{t:nonpos}, $\tilde{W}$ is CAT$(0)$, and is
  CAT$(-\kappa)$ for some $\kappa>0$ if $k\leq 1$.

  The group $Q = \pi_1(N)/\pi_1(T)$ acts freely and properly
  discontinuously by isometries on
  $\tilde{W}$, with quotient $W$ homeomorphic to $C(N,T)$.  Thus the
  quotient is locally nonpositively curved, locally CAT$(-\kappa)$ if
  $k\leq 1$, and Riemannian away from the core 
  \[\{(0,e,-)\mid e\in \E^k\}/Q. \]

  Note that if we write
  $\partial \tilde{W}$ for $\{1+\lambda\}\times \E^k\times T$, then there
  are $\frac{\lambda}{2}$--neighborhoods of $\partial\tilde{W}$ and
  $\partial\tilde{E}_\lambda$ which are canonically isometric.  This
  isometry descends to an isometry between
  $\frac{\lambda}{2}$--neighborhoods of $\partial W$ and $\partial
  E_\lambda$, which takes $T\subset W$ to $T_\lambda$ as
  described in the statement of the theorem.
\end{proof}
\begin{remark}
Since we obtain a compact space with the same metric near the boundary
as a cusp
by this operation,
this is sometimes called cusp closing (cf.\cite{Sc,An,HuSc}).
The difference in our setting
is that we are not generally closing the cusp ``as a manifold'' or
even as an
orbifold.
Rather, we show how to close a cusp ``as a pseudomanifold'',
with a certain curvature control.

A reverse operation, cusp opening or ``drilling'', has also been
studied. (See for example
\cite{Ag,HoKe} for applications to hyperbolic $3$--manifolds.)
For example, in \cite{Fu} a cusp is produced under 
a certain curvature control after removing 
a totally geodesic codimension two submanifold in a closed
hyperbolic manifold.  This construction also used doubly warped
products and was generalized in \cite{AbSc} (cf. \cite{Be1,Be2}).
\end{remark}

\subsection{Proof of Theorem \ref{t:filling} and Proposition \ref{p:hyper}}
Let $n\geq 2$.
Suppose $M$ is a finite volume hyperbolic $(n+1)$--manifold, 
with
horospherical cusps $E_1,\ldots,E_m$ embedded in $M$.  Suppose each of those cusps is
isometric to $(-\infty,0]\times N_i$ for a flat torus $N_i$.  
Choose $\lambda>0$ so that the $\lambda$--neighborhood of
$\bigcup_{i=1}^nE_i$ is still embedded in $M$.
For each
$i$, let $T_i$ be a totally geodesic torus in $N_i$ with
injectivity radius larger than $\pi$, so that the space
$M(T_1,\ldots,T_m)$ from Definition \ref{d:filling} is a
$2\pi$--filling of $M$. 

We will describe a metric on
$M(T_1,\ldots,T_m)$ satisfying the conclusions of Theorem
\ref{t:filling} and \ref{p:hyper}.

Fix $i\in \{1,\ldots,m\}$.  The $\lambda$--neighborhood of the cusp
$E_i$ is embedded, and disjoint from the $\lambda$--neighborhood of
all the other cusps.  Give $C(N_i,T_i)$ the nonpositively curved
metric from Theorem \ref{t:metricplug}.

For $t\in [0,\lambda]$, let $M_t\subset M$ be the closure of the
complement of the union of
the $t$--neighborhoods of the cusps $E_1,\ldots,E_m$.  Thus $M_0$ is
the compact manifold $\overline{M}$ from Definition \ref{d:filling}.  By
our choice of $\lambda$, each $M_t$ is an embedded submanifold with
boundary, and each $M_t$ is isotopic to $M_0$.
Let $D_i$ be an open $\frac{\lambda}{2}$--neighborhood of the
boundary of $C(N_i,T_i)$.
According to Theorem \ref{t:metricplug}.\eqref{mplug2}, there is an isometry
\[ \psi_i\co D_i\longrightarrow M_{\frac{\lambda}{2}}\setminus M_{\lambda}  \]
for each $i$, so that the space
\[Y = M_{\frac{\lambda}{2}}\bigcup_{\psi_1\sqcup\cdots\sqcup\psi_m}(C(N_1,T_1)\sqcup\cdots\sqcup C(N_m,T_m))\]
is homeomorphic to the filling $M(T_1,\ldots,T_m)$ by a homeomorphism
taking $M_{\frac{\lambda}{2}}\subset Y$ isometrically to $M_{\frac{\lambda}{2}}\subset
M(T_1,\ldots,T_m)$.  It is clear that
Proposition \ref{p:hyper} holds for this metric on
$M(T_1,\ldots,T_m)$, and $\overline{M}'=M_{\frac{\lambda}{2}}$.

Conclusion \eqref{fill1} of Theorem \ref{t:filling} holds by
construction; indeed, every non-singular point of $Y$ has a neighborhood
which is either isometric to a neighborhood in $M$ or to a
neighborhood in the non-singular part of some $C(N_1,T_1)$, endowed
with the metric from Theorem \ref{t:metricplug}.  This metric is
Riemannian by Theorem \ref{t:metricplug}.\eqref{mplug1}.  The fact
that the metric on $Y$ is the completion of the Riemannian metric on
the non-singular part can be deduced from Lemma \ref{l:completion}.

Every point in $Y$ has a neighborhood isometric either to a neighborhood of a point
in $M$, which has curvature bounded above by $-1$ by assumption, or in $C(N_1,T_1)$,
which is nonpositively curved by Theorem \ref{t:metricplug}.
Conclusion \eqref{fill2} of Theorem \ref{t:filling} is established.

If for every $i$, the torus $T_i$ has codimension at most $1$ in
$N_i$, then the spaces $C(N_i,T_i)$ have
curvature bounded above by $-\kappa$ for some $\kappa>0$, by Theorem
\ref{t:metricplug}.\eqref{mplug3}.  Conclusion \eqref{fill3} of
Theorem \ref{t:filling} follows. 

\subsection{Isolated flats}
Let $M(T_1,\ldots,T_m)$ be a $2\pi$--filling of the hyperbolic
$(n+1)$--manifold $M$.
In this section we concentrate on the case in which, for at least one
$i\in \{1,\ldots,m\}$, the torus $T_i$ has dimension at most $n-2$.
In this case the metric constructed in Theorem \ref{t:filling} is
nonpositively curved but not locally CAT$(-\kappa)$ for any positive
$\kappa$.  This is because the filling core $V_i$ of $C(N_i,T_i)$ is an
isometrically embedded $k$--torus, where $k = n-\dim(T_i)>1$.

The metric described in Theorem \ref{t:filling} lifts to a metric on 
the universal cover $X$ of $M(T_1,\ldots,T_m)$.
\begin{definition}
A convex
subset of a geodesic metric space which is isometric 
to $\E^d$ is called a \emph{flat}, if $d\geq 2$.
\end{definition}

The preimage of $V_i$ in $X$ is a union of
$k$--dimensional flats.  Conversely, every flat in $X$ is in the
preimage of some filling core, since the metric on $M(T_1,\ldots,T_m)$ 
is Riemannian and
negatively curved away from the cores, by Theorem
\ref{t:filling}.\eqref{fill1}. 

In \cite{HK}, Hruska and Kleiner give a definition of isolated flats
(their (IF1)), which is easier to check than the equivalent definition
from \cite{Hr}.  The next two definitions are taken from \cite{HK}.
\begin{definition}\label{d:hcobs}
  Let $X$ be a proper metric space, and let $\mc{C}$ be the set of
  closed subsets of $X$, and let $d_H$ be the Hausdorff distance on
  $\mc{C}$.  Let $C\in \mc{C}$, let $x\in X$, and let $r$ 
  and $\epsilon$ be positive real numbers. 
  Define
\[ U(C,x,r,\epsilon) = \{D\in \mc{C} \mid d_{H}(C\cap
B(x,r),D\cap B(x,r))\leq
\epsilon\}. \]
  The \emph{topology of Hausdorff convergence
    on bounded sets} is the smallest topology on $\mc{C}$ so that all
  such sets are open.
\end{definition}
\begin{definition}
  Let $X$ be a CAT$(0)$ space, and let $\Gamma$ act properly
  discontinuously, cocompactly, and isometrically on $X$.  The space
  $X$  has \emph{isolated flats} if it contains a
  family of flats $\mathcal{F}$ satisfying:
  \begin{itemize}
    \item \emph{Equivariance:} The set of flats $\mathcal{F}$ is
      invariant under the action of $\Gamma$.  
    \item \emph{Maximality:} There is a constant $B$ such that every flat in $X$ 
      is contained in a $B$--neighborhood of some flat $F'\in \mathcal{F}$. 
    \item \emph{Isolation:} The set $\mc{F}$ is a discrete subset of
      $\mathrm{Flat}(X)$, the set of all flats in $X$ with the
      topology of Hausdorff convergence on bounded sets.
  \end{itemize}
\end{definition}

We prove that our $X$ has isolated flats.
\begin{proposition}\label{p:iflats}
  Let $M(T_1,\ldots,T_m)$ be a $2\pi$--filling of the hyperbolic
  manifold $M$, endowed with the nonpositively curved metric from
  Theorem \ref{t:filling}.  If $X$ is the universal cover of
  $M(T_1,\ldots,T_m)$, then
  $X$ has isolated flats.
\end{proposition}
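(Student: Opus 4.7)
The plan is to let $\mathcal{F}$ be the collection of all lifts to $X$ of all filling cores of dimension at least $2$. Each element of $\mathcal{F}$ is an isometric copy of $\E^{k_i}$ with $k_i = n - \dim(T_i) \geq 2$, hence a flat. The deck group $\Gamma$ permutes these lifts, so $\mathcal{F}$ is equivariant. What remains is maximality with $B = 0$ and isolation, and the whole argument rests on one geometric claim: at every Riemannian point of $X$ every sectional curvature is strictly negative.

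To prove the claim, I would argue region by region. On the preimage of $\overline{M}' \subset M(T_1, \ldots, T_m)$ the metric is hyperbolic by Proposition \ref{p:hyper}, so sectional curvatures equal $-1$. Near a lift of a core, the metric has the warped product form $[0, 1+\lambda] \times_{g(r)} \E^{k_i} \times_{f(r)} T_i$ from Theorem \ref{t:metricplug}, so by Proposition \ref{p:sectional} every sectional curvature at a Riemannian point with $r > 0$ is a convex combination of
\[-\frac{g''(r)}{g(r)},\ -\frac{f''(r)}{f(r)},\ -\frac{(g'(r))^2}{g(r)^2},\ -\frac{(f'(r))^2}{f(r)^2},\ -\frac{f'(r)g'(r)}{f(r)g(r)}.\]
For $0 < r < \delta$ the formulas $g = \cosh$ and $f = \sinh$ of Proposition \ref{p:warpfg} make these five quantities equal to $-1, -1, -\tanh^2 r, -\coth^2 r, -1$, all negative. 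For $r \in [\delta, 1+\lambda]$, the six functions $f, g, f', g', f'', g''$ are continuous and strictly positive: $f'', g'' > 0$ by Proposition \ref{p:warpfg}, and since $f'(0) = 1$ and $g'(0) = 0$ strict convexity forces $f', g' > 0$ on $(0, 1+\lambda]$. Hence all five quantities are strictly negative on the entire interval $r > 0$, and the convex combination giving any sectional curvature is strictly negative. A $2$-flat cannot contain any such point, so every flat of $X$ lies in the preimage of the cores; by connectedness, a given flat lies in a single lift, which must be a member of $\mathcal{F}$, since the $0$- and $1$-dimensional core components carry no $2$-flats. This establishes maximality with $B = 0$.

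For isolation, I fix $F \in \mathcal{F}$, choose $x \in F$, and exploit proper discontinuity. Since the $\Gamma$-action on $X$ is properly discontinuous and $\mathcal{F}$ has only finitely many $\Gamma$-orbits (one per $i$ with $k_i \geq 2$), only finitely many $F' \in \mathcal{F}$ meet the compact ball $\overline{B(x, 2)}$. Any $F' \in \mathcal{F}$ with $d_H(F \cap B(x,1), F' \cap B(x,1)) < 1$ must come within distance $1$ of $x \in F$ and hence meet $\overline{B(x, 2)}$. Distinct elements of $\mathcal{F}$ are pairwise disjoint closed sets (different lifts of disjoint connected sets in a universal cover), so each of the finitely many $F' \neq F$ meeting $\overline{B(x, 2)}$ is at positive Hausdorff distance from $F$ on $B(x, 1)$; taking $\epsilon$ less than the minimum of these distances gives $U(F, x, 1, \epsilon) \cap \mathcal{F} = \{F\}$. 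The main obstacle is the strict-negativity step in maximality, which requires handling the warped-product sectional curvature formulas carefully across the gluing regions; equivariance and isolation are essentially formal once that is in place.
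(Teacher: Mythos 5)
Your proof is correct, but it takes a genuinely different route from the paper's in both the choice of $\mathcal{F}$ and the isolation argument. The paper declares $\mathcal{F}$ to be the set of \emph{all} flats in $X$, which makes maximality (with $B=0$) and equivariance immediate; the real content is isolation, which they establish with a quantitative bound: any path between two distinct flats must leave the cones and pass through the preimage of $\overline{M}'$, forcing its length to be at least $2+\lambda$, so $U(F,x,r,\lambda)$ contains no flat other than $F$. You instead take $\mathcal{F}$ to be the lifts of the filling cores of dimension $\geq 2$, which shifts the burden to maximality: you re-derive from Propositions~\ref{p:sectional} and~\ref{p:warpfg} that every Riemannian point of $X$ has all sectional curvatures strictly negative (a fact the paper states in Theorem~\ref{t:filling}.\eqref{fill1} and uses without re-proving in this proof, observing just before the proposition that ``every flat in $X$ is in the preimage of some filling core''). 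Your curvature computation is correct, and worth noting is that near $\Xi$ the term $-\tanh^2 r$ tends to $0$ as $r\to 0^+$, so the negativity is pointwise but not uniform --- exactly why the local model near a high-codimension core is CAT$(0)$ but not CAT$(-\kappa)$ --- and pointwise negativity suffices to exclude flats at a Riemannian point. Your isolation argument is also valid but ``softer'': proper discontinuity, compactness, and pairwise disjointness of the closed flats give a positive $\epsilon$ without an explicit distance bound. (One detail to fill in: to see finitely many $\gamma F_0$ meet $\overline{B(x,2)}$ from proper discontinuity, use that $F_0/\mathrm{Stab}(F_0)$ is compact so $F_0$ is covered by $\mathrm{Stab}(F_0)$-translates of a compact set; or more directly, distinct flats have disjoint standard $\delta$-neighborhoods, so any compact set meets only finitely many of them.) Net comparison: the paper's proof is shorter and yields a uniform $2+\lambda$ separation between flats; yours is more self-contained about the curvature and avoids the explicit metric estimate.
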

\begin{proof}
We let $\mc{F}$ be the collection of all flats in $X$.

Maximality and equivariance are clear.  We show the isolation
property.  Let $F_1$ and $F_2$ be elements of $\mc{F}$.  Any path from
$F_1$ to $F_2$ must project to a path from some filling core $V_1$ to
some (possibly the same) filling core $V_2$, which passes through
$\overline{M}'\subset M(T_1,\ldots,T_m)$, where $\overline{M}'$ is as
in the statement of Proposition \ref{p:hyper}.  Such a path must have
length at least $2+\lambda$.

Let $F\in \mc{F}$, and let $x\in F$.  Choose any $r>0$, and let $U =
U(F,x,r,\lambda)$ be defined as in Definition \ref{d:hcobs}.  Clearly $F\in
U$, so $U$ is an open neighborhood of $F$.  On the other hand, since
any path from $F$ to another flat has 
length
at least $2+\lambda$, no other flat is in $U$, so $F$ is an isolated
point of $\mc{F}$.  The flat $F$ was arbitrary, so $\mc{F}$ is discrete.
\end{proof}

\section{Visual boundaries of fillings}\label{s:visbound}
We suppose that $Y=M(T_1,\ldots,T_m)$ is obtained by filling a
hyperbolic $(n+1)$--manifold with $m$ toral cusps, and that the tori $T_i$
all satisfy the hypotheses of Theorem \ref{t:filling}.  It follows
(Corollary \ref{c:cwif}) that the universal cover $X$ of
$M(T_1,\ldots,T_m)$ is a CAT$(0)$ space.
In particular, since $X$ is CAT$(0)$,
the visual boundary $\partial X$ is a boundary for $G=\pi_1(Y)$
in the sense of Bestvina \cite{B}.  The \emph{shape} of
such a boundary is an invariant of $G$ 
\cite[Proposition 1.6]{B}.  
(Since $X$ is either CAT$(-1)$ or CAT$(0)$ with isolated flats,
the \emph{homeomorphism type} of the boundary is an invariant of 
$G$, by a result of Hruska \cite{Hr}.)
In particular, the reduced integral
\v{C}ech cohomology groups of $\partial X$
are invariants of $G$, since they are shape invariants.  
By another result 
of Bestvina, these cohomology groups are the same
as the (reduced) cohomology groups of $G$ with $\Z G$ coefficients
(\cite[Proposition 1.5]{B},
cf. \cite{BM} for the case when $G$ is word hyperbolic).
The particular results we need from \cite{B} can
be summarized:
\begin{theorem}\label{t:bestvina}
\cite{B}
If $G$ is the fundamental group of a compact non-positively curved
space $Y$ with universal cover $X$, and $Z$ is the visual boundary of
$X$, then
\[ \check{H}^q(Z;\Z)\cong H^{q+1}(G; \Z G).\]
Moreover, if $G$ is a Poincar\'e duality group of dimension $n$, then
$Z$ is a \v{C}ech cohomology sphere of dimension $n-1$.
\end{theorem}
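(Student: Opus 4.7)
The plan is to realize the pair $(\bar{X},Z)=(X\cup Z,Z)$ as a \emph{Z-set compactification} in the sense of Bestvina and then exploit the long exact sequence of the pair together with the standard identification of compactly supported cohomology with $H^{*}(G;\Z G)$. Concretely, I would first verify that $\bar{X}$ is a compact metrizable absolute retract (AR) and that $Z$ is a Z-set in $\bar{X}$, meaning there is a homotopy $h_t\co\bar{X}\to\bar{X}$ with $h_0=\mathrm{id}$ and $h_t(\bar{X})\subset X$ for $t>0$. Contractibility is immediate from geodesic retraction to a basepoint $x_0\in X$, using the description of $\partial X$ as equivalence classes of geodesic rays from $x_0$ and the fact that CAT$(0)$ geodesics vary continuously with their endpoints; the same retraction, pushed slightly off the boundary, supplies the Z-set homotopy. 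The AR property then reduces to checking local contractibility of $\bar{X}$ at boundary points, which follows from the cone-like visual neighborhood basis, together with finite-dimensionality coming from the bounded local combinatorial complexity of $X$.

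Given the Z-set compactification, the long exact sequence of the pair $(\bar{X},Z)$ in \v{C}ech cohomology, together with $\check{H}^{>0}(\bar{X};\Z)=0$ (since $\bar{X}$ is contractible), produces isomorphisms
\[ \check{H}^q(Z;\Z)\;\cong\; H^{q+1}(\bar{X},Z;\Z) \]
for $q\ge 1$, and an analogous statement in degree $0$ after handling the augmentation. The Z-set property (which makes $X=\bar{X}\setminus Z$ ``nicely'' dense) together with excision identifies $H^{q+1}(\bar{X},Z;\Z)$ with the compactly supported cohomology $H^{q+1}_c(X;\Z)$. Since $G$ acts properly and cocompactly on the contractible space $X$---after passing to a torsion-free finite-index subgroup via Selberg's lemma, or equivalently by invoking the Bredon-equivariant version for the virtually torsion-free case---$X$ is a cocompact model for $EG$, so there is a canonical isomorphism $H^{q+1}_c(X;\Z)\cong H^{q+1}(G;\Z G)$ coming from the chain-level identification of compactly supported cochains on $X$ with cochains on $G$ valued in $\Z G$.

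For the Poincar\'e-duality statement: if $G$ is a PD$_n$ group, then by definition $H^q(G;\Z G)=0$ for $q\ne n$ and $H^n(G;\Z G)\cong\Z$. Plugging these values into the isomorphism $\check{H}^q(Z;\Z)\cong H^{q+1}(G;\Z G)$ gives $\check{H}^{n-1}(Z;\Z)\cong\Z$ and $\check{H}^q(Z;\Z)=0$ for $q\ne n-1$, which is exactly the defining property of a \v{C}ech cohomology $(n-1)$-sphere. The principal obstacle is the geometric-topological first step: verifying the AR/Z-set structure on $\bar{X}$. Contractibility of $\bar{X}$ is soft, but the AR property demands local contractibility at $\partial X$ and finite-dimensionality, and the continuous extension of the geodesic retraction to $Z$ uses CAT$(0)$ comparison rather than mere Gromov hyperbolicity; in the singular setting of our fillings (where $X$ has non-manifold points along lifts of the filling cores) this verification must be done carefully at both the manifold and the singular strata, and it is this uniform control at boundary points that carries the real content of the theorem.
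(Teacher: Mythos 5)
The paper gives no proof of this theorem; it is cited directly from Bestvina's \emph{Local homology properties of boundaries of groups} \cite{B} (specifically Propositions 1.5--1.6), so there is no internal argument to compare against. Your sketch does recapitulate Bestvina's route: show $\bar{X}=X\cup Z$ is a finite-dimensional compact AR in which $Z$ sits as a $\mathcal Z$-set, run the long exact sequence of the pair $(\bar{X},Z)$ in \v{C}ech cohomology to transfer from $\check{H}^q(Z)$ to $\check{H}^{q+1}(\bar{X},Z)$, identify the latter with $H^{q+1}_c(X)$, and then with $H^{q+1}(G;\Z G)$ using that $X$ is a cocompact model for $EG$. The PD$_n$ corollary is then immediate from the cohomology isomorphism.

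Two corrections. First, the parenthetical invoking Selberg's lemma or a Bredon-equivariant version is both unnecessary and incorrect: in the hypotheses of the theorem, $G=\pi_1(Y)$ acts \emph{freely} on the contractible CAT$(0)$ space $X$, so $G$ is automatically torsion-free and no reduction is needed. Moreover, Selberg's lemma applies to finitely generated linear groups, which CAT$(0)$ groups need not be; and even if $G$ had torsion, passing to a finite-index torsion-free $G'\le G$ would give you $H^*(G';\Z G')$, which is not the coefficient module you want. Drop the aside. Second, the phrase ``bounded local combinatorial complexity'' is not a statement one can check; the relevant inputs are that $X$ is finite-dimensional (because $Y$ is a compact finite-dimensional locally CAT$(0)$ space) and that the visual compactification of a finite-dimensional proper CAT$(0)$ space with cocompact group action is a finite-dimensional compact AR in which $\partial X$ is a $\mathcal Z$-set --- this is the content one should cite rather than re-derive, since it is exactly what makes Bestvina's axioms applicable to CAT$(0)$ groups. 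Once those axioms hold, the cohomology identification is formal and your argument is correct.
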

In our setting, we do not find the homeomorphism type of $Z$, but we
give enough information to determine the shape of $Z$.

\subsection{The boundary as an inverse limit}\label{s:inverselimit}
If $X$ is any CAT$(0)$ space, and $x\in X$, then the visual boundary
of $X$ is homeomorphic to an inverse limit of metric spheres around
$x$ \cite[II.8.5]{BrHa}.  In order to describe the visual boundary, it therefore suffices
to describe 
\begin{enumerate}
\item the spheres of finite radius about a fixed point, and
\item the projection maps toward the fixed point between these
  spheres.
\end{enumerate}
For the remainder of this subsection,
we suppose that $M$ is a hyperbolic $(n+1)$--manifold, and that 
$M(T_1,\ldots,T_m)$ is a filling satisfying the
hypotheses of Theorem \ref{t:filling}, so that $F_1,\ldots,F_m$ are
the filling cores.  Let $X$ be the universal cover
of $M(T_1,\ldots,T_m)$.
\begin{definition}
If $A$ is a component of the preimage in $X$ of a filling core in
$M(T_1,\ldots,T_m)$, we say that $A$ is a \emph{singular point},
\emph{singular geodesic}, or \emph{singular flat}, depending on
whether $A$ is $0$, $1$, or $k$--dimensional for $k\geq 2$.  The union
of all singular points, geodesics, and flats in $X$ is called the
\emph{singular set}, or $\Xi$.
\end{definition}
\begin{definition}\label{d:Astandard}
If $A$ is a component of the singular set, then by the construction of
Section \ref{s:basic}, $A$ has a regular neighborhood in $X$ isometric
to
\[ [0,\delta]\times_{\cosh(t)}A\times_{\sinh(t)}T^k \]
for some flat torus $T^k$ with injectivity radius strictly 
larger than $\pi$.  We call this a \emph{standard neighborhood of} $A$
of radius $\delta$.
\end{definition}

\begin{remark}
In this section the emphasis is on filling cores which are positive
dimensional, but all statements we give remain true, if the following
conventions are kept in mind:  
\begin{enumerate}
\item The $0$--disk is a point.
\item The $-1$--sphere and the $-1$--disk are both the
empty set $\emptyset$.
\item The spherical join of the empty set with any
metric space $(M,d)$ is taken to be $(M,d_\pi)$, where
$d_\pi(x,y)=\min\{\pi,d(x,y)\}$. 
\end{enumerate}
If all cores are assumed to be zero-dimensional, the results here
merely recapitulate those of \cite{MS}.
\end{remark}

The space $X$ is a manifold away from the singular set $\Xi$.  
Even near the singular set, $X$ is quite well behaved.
\begin{lemma}\label{l:nicelocal}
There exists a constant $\delta$ so that $\linj(p) \geq \delta$ for
all $p\in \Xi$.  
For $p\in X\smallsetminus \Xi$ we have $\linj(p) \geq d(p,\Xi)>0$.
\end{lemma}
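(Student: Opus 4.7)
The plan is to split into the two cases $p\in\Xi$ and $p\notin\Xi$ and handle them separately.

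For $p\in\Xi$, let $A$ be the component of $\Xi$ containing $p$. By Definition \ref{d:Astandard}, $A$ has a standard neighborhood in $X$ that is isometric to a warped-product model of the form $[0,\delta_A]\times_{\cosh(t)}A\times_{\sinh(t)}T^k$ for some $\delta_A>0$. Corollary \ref{c:loginj} asserts that $\log_p$ is injective on any open neighborhood of $p$ in this model. To transfer this to $X$, I would argue that the metric ball $B(p,\delta_A)\subseteq X$ sits inside the tube $\{q\in X:d(q,A)<\delta_A\}$ (since $d(q,A)\le d(q,p)<\delta_A$ for $q\in B(p,\delta_A)$), and that this tube is convex in the CAT$(0)$ space $X$ because $d(\cdot,A)$ is a convex function (Useful fact \ref{convex}.\eqref{c-dist}). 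Consequently every CAT$(0)$ geodesic from $p$ to a point of $B(p,\delta_A)$ remains in the tube and therefore coincides with the corresponding geodesic in the warped-product model, so Corollary \ref{c:loginj} gives $\linj(p)\ge\delta_A$.

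Uniformity of $\delta_A$ over all singular components then follows from cocompactness: the action of $\pi_1(Y)$ on $X$ permutes the components of $\Xi$ with finitely many orbits (because $\Xi$ is the preimage of the finite set of filling cores), and since the value $\delta_A$ depends only on the orbit of $A$, taking $\delta$ to be the minimum of $\delta_A$ over orbit representatives gives a uniform lower bound valid at every $p\in\Xi$.

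For $p\notin\Xi$, first note that $\Xi$ is a locally finite disjoint union of closed flats, hence closed in $X$, so $r:=d(p,\Xi)>0$. The open ball $B(p,r)$ is contained in $X\setminus\Xi$, which by Theorem \ref{t:filling}.\eqref{fill1} is a smooth Riemannian manifold. To show $\linj(p)\ge r$, take $q_1,q_2\in B(p,r)$ with $\log_p(q_1)=\log_p(q_2)$; since $X$ is CAT$(0)$, the unique geodesics $\gamma_i$ from $p$ to $q_i$ lie in the convex ball $B(p,r)$ and hence entirely in the Riemannian part of $X$. Equality of log images means $\gamma_1$ and $\gamma_2$ have the same length and the same initial tangent vector at $p$, so Riemannian geodesic ODE uniqueness forces $\gamma_1=\gamma_2$ and $q_1=q_2$.

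The main obstacle I expect is in the first part: verifying rigorously that log-injectivity in the model $\tilde F$ provided by Corollary \ref{c:loginj} transfers to log-injectivity in a metric ball of definite size in the global space $X$. The key input is the convexity of the tube around $A$, which forces every CAT$(0)$ geodesic in $X$ emanating from $p$ to a nearby point to agree with the warped-product geodesic read off from the model; once this is in hand, uniformity via cocompactness and the Riemannian argument for $p\notin\Xi$ are routine.
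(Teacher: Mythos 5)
Your proof is correct and follows essentially the same route as the paper: use the standard neighborhood of a singular component together with Corollary \ref{c:loginj} and finiteness of orbits to handle $p\in\Xi$, and use the convexity of balls in CAT$(0)$ plus Riemannian uniqueness of geodesics away from $\Xi$ to handle $p\notin\Xi$. You fill in two steps the paper leaves implicit (convexity of the $\delta_A$-tube to transfer log-injectivity from the model $\tilde F$ to $X$, and the explicit ODE-uniqueness argument in place of the paper's appeal to $X\smallsetminus N$ being a nonpositively curved manifold with boundary), but these are elaborations of the same argument rather than a different one.
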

\begin{proof}
If $p\in A$
for some singular component $A$ of $\Xi$, and $A$ has a standard
$\delta$--neighborhood as in Definition \ref{d:Astandard}, then
$\linj(p)\geq \delta$, by Corollary \ref{c:loginj}.  Since there are
finitely many orbits of components of $A$ in $X$, some $\delta$ works
for all such $p$.

Let $p\in X\smallsetminus\Xi$,
let $\epsilon>0$, and let $N$ be an open $\epsilon$--neighborhood of
$\Xi$ in $X$.  The space $X\smallsetminus N$ is a complete
nonpositively curved manifold with boundary.  If
$r<d(p,\Xi)-\epsilon$, then the $r$--ball around $p$ lies in the
interior of this manifold, and so the logarithm $\log_p$ restricted to
this ball is injective.  
\end{proof}
\begin{remark}
  We will see in Lemma \ref{l:reverseshadow} that if $p\notin \Xi$,
  then $\linj(p)\leq d(p,\Xi)$, so actually $\linj(p) = d(p,\Xi)$.
\end{remark}
\begin{lemma}\label{l:gepX}
  $X$ has the geodesic extension property.
\end{lemma}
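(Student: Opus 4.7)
The plan is to verify the geodesic extension property for $X$ by cases on whether the endpoint of the given segment lies in the singular set $\Xi$. Let $\sigma\co[0,a]\to X$ be a locally geodesic segment terminating at $y=\sigma(a)$. Since $X$ is complete CAT$(0)$, it suffices to produce a nontrivial locally geodesic extension of $\sigma$ past $y$.

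When $y\notin\Xi$, Lemma \ref{l:nicelocal} places $y$ in the interior of a smooth nonpositively curved Riemannian region of $X$. The incoming segment $\sigma$ has a well-defined unit tangent vector at $y$, and I would extend $\sigma$ by flowing in that direction under the Riemannian geodesic flow for a short time; this gives a smooth extension which is automatically a local geodesic. This case is routine.

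The main case is $y\in\Xi$. Then $y$ lies in some component $A$ of the singular set, which, by the construction of Section \ref{s:basic}, admits a standard neighborhood $U$ isometric to $[0,\delta]\times_{\cosh(t)}A\times_{\sinh(t)}T^k$, where $A$ is isometric to $\E^j$ (since $A$ is a connected lift of a flat-torus filling core) and $T^k$ is a flat torus of injectivity radius greater than $\pi$ (because $M(T_1,\ldots,T_m)$ is a $2\pi$--filling). This $U$ embeds isometrically as the $[0,\delta]$--slice of the model space $\tilde F=[0,\infty)\times_{\cosh(r)}\E^j\times_{\sinh(r)}T^k$ considered in Lemma \ref{l:gep}. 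I would invoke that lemma directly: its proof identifies the terminal direction of $\sigma$ with a point $(\phi,\alpha,\theta)$ of the spherical join $S^{j-1}\join T^k$, and produces an antipodal direction $(\phi,-\alpha,\theta')$ with $d_{T^k}(\theta,\theta')=\pi$; a short geodesic starting at $y$ in this antipodal direction extends $\sigma$ with Alexandrov angle $\pi$, hence as a local geodesic. Shortening the extension if necessary keeps it inside $U$, and therefore inside $X$.

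The crux of the argument—and the only step that is not essentially bookkeeping—is the antipodal-direction construction at singular points. Its success hinges on the existence of a point at distance exactly $\pi$ in the cross-sectional torus $T^k$, which is guaranteed precisely by the $2\pi$--filling hypothesis. Everything else reduces to the local Riemannian picture away from $\Xi$ together with the content already packaged in Lemma \ref{l:gep}.
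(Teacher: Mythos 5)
Your proposal is correct and follows the same two-case decomposition as the paper's proof: away from the singular set $\Xi$ one uses the smooth Riemannian nonpositively-curved structure, and at a point of $\Xi$ one invokes Lemma \ref{l:gep} (via the standard neighborhood of Definition \ref{d:Astandard}) to extend along an antipodal direction in the spherical join $S^{k-1}\join T$. The paper states this in two terse sentences; you have simply expanded the same argument, correctly noting both the role of the $2\pi$--filling hypothesis in guaranteeing injectivity radius $>\pi$ for $T$ and the need to keep the short extension inside the standard neighborhood.
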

\begin{proof}
  Away from the singular set, $X$ is a Riemannian
  manifold of nonpositive curvature, so local geodesics can be
  extended in $X\setminus \Xi$.  A local geodesic terminating on $\Xi$
  can be extended by Lemma \ref{l:gep}.
\end{proof}

For the remainder of the section, we fix a point
$x_0\in X\setminus \Xi$, and a constant $\delta>0$ as in Lemma
\ref{l:nicelocal}. 
\begin{definition}
A \emph{sphere centered at $x_0$} is a metric sphere
\[ S_r = \{x\in X\mid d(x,x_0)=r\}\]
for some $r>0$.  We write $B_r$ for the closed ball of radius $r$
centered at $x_0$, and write $\mathring{B}_r$ for the interior of $B_r$.
\end{definition}
For small enough $r$, $S_r$ is homeomorphic to an ordinary
sphere $S^n$ and $B_r$ to an ordinary $(n+1)$--dimensional ball.
Describing the topology for larger $r$ is the main goal in this subsection.

Recall that an \emph{ENR}, or \emph{Euclidean Neighborhood Retract}, is
any space homeomorphic to a retract of an open subset of some
euclidean space.  Equivalently, an ENR is any locally compact,
metrizable, separable, finite dimensional ANR.
\begin{lemma}\label{l:enr}
Suppose that $d(x_0,A)\neq r$ for any component $A$ of the singular
set $\Xi$.  Then the metric sphere $S_r$ is an ENR, and $S_r\smallsetminus
\Xi$ is an $n$--manifold which is open and dense in $S_r$.
\end{lemma}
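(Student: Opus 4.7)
The plan is to verify the four assertions in the order (openness), manifold structure, density, and finally the ENR property. Openness is immediate since $\Xi$ is closed in $X$, so $S_r\setminus\Xi = S_r\cap(X\setminus\Xi)$ is open in $S_r$.

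For the manifold structure at $q\in S_r\setminus\Xi$, the metric is smoothly Riemannian in a neighborhood of $q$, and since $q,x_0\notin\Xi$ the CAT$(0)$ geodesic $[x_0,q]$ has a well-defined tangent vector at $q$ in the local Riemannian chart. I would choose a small smooth codimension-one hypersurface $H\subset X\setminus\Xi$ through $q$ transverse to this tangent vector, and define $\Phi\co H\times(r-\epsilon,r+\epsilon)\to X$ by letting $\Phi(h,s)$ be the point at arclength $s$ on the geodesic extension of $[x_0,h]$. Continuous dependence of geodesics on their endpoints in a proper CAT$(0)$ space, together with Lemma \ref{l:gepX}, makes $\Phi$ continuous; uniqueness of CAT$(0)$ geodesics from $x_0$ makes it injective on a sufficiently small domain. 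Invariance of domain (in the Riemannian region near $q$) then identifies $\Phi$ with a homeomorphism onto a neighborhood of $q$ with $\Phi^{-1}(S_r)=H\times\{r\}$, so $S_r$ is locally homeomorphic to $H$, giving the topological $n$-manifold structure.

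For density, fix $p\in S_r\cap A$ in a singular component $A$. The hypothesis $d(x_0,A)\neq r$ together with $d(x_0,A)\leq d(x_0,p)=r$ forces $d(x_0,A)<r$. Since $A\cong\E^{k'}$ is a convex flat in the CAT$(0)$ space $X$ with $x_0\notin A$, the restriction $d(x_0,\cdot)|_A$ is strictly convex, so $p$ is not its minimum and there exist points $a_\pm\in A$ arbitrarily close to $p$ with $d(x_0,a_-)<r<d(x_0,a_+)$ (obtained by moving toward and away from the unique minimum in $A$). Using the standard warped-product neighborhood $U=[0,\delta]\times_{\cosh r}A\times_{\sinh r}T$ of $A$ at $p$ (Definition \ref{d:Astandard}), continuity of the distance function gives $d(x_0,(t_0,a_-,\theta_0))<r<d(x_0,(t_0,a_+,\theta_0))$ for all sufficiently small $t_0>0$ and any fixed $\theta_0\in T$, and the intermediate value theorem along the Euclidean segment from $a_-$ to $a_+$ in $\{t_0\}\times A\times\{\theta_0\}$ produces a point of $S_r\cap(U\setminus\Xi)$ arbitrarily close to $p$.

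For the ENR property, $S_r$ is a compact metrizable space of topological dimension at most $n$ (being a proper closed subset of the $(n+1)$-dimensional pseudomanifold $X$). By Borsuk's theorem, a compact finite-dimensional locally contractible metric space is an ANR, and any compact ANR is an ENR, so it suffices to verify local contractibility. At $q\in S_r\setminus\Xi$ this is immediate from the manifold structure above. At $p\in S_r\cap A$ I would use the standard neighborhood $U$ together with a radial-type homotopy: for $q\in S_r$ near $p$, the geodesic $[p,q]$ lies in the convex ball $B_r$, and projecting each $[p,q](\tau)$ outward to arclength $r$ along a forward extension of $[x_0,\cdot]$ should yield a path in $S_r$ from $p$ to $q$ depending continuously on $q$. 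The main obstacle is that forward geodesic extension is non-unique at points of $\Xi$ --- Lemma \ref{l:dirjoin} gives the space of directions a spherical join structure $S^{k'-1}\join T$, with a positive-dimensional family of antipodal directions in the $T$-factor --- so the homotopy must be assembled by making a continuous choice of extensions, which I expect to carry out by working in the warped-product coordinates of $U$ where the outward radial direction varies continuously in the Riemannian region and admits canonical limits through the singular locus.
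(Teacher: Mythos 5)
Your reduction of the ENR property to local contractibility (via Borsuk's theorem; the paper uses Dugundji's equivalent criterion), your density argument using strict convexity of $d(x_0,\cdot)|_A$ together with the warped-product coordinates, and your manifold argument at non-singular points are all sound, though the last two differ somewhat from the paper's route. The genuine gap is exactly where you flag one: local contractibility of $S_r$ at a singular point $p\in S_r\cap A$. Your proposed radial homotopy requires a continuous choice of forward geodesic extension past points of $\Xi$, and you say only that you ``expect'' to carry this out in warped-product coordinates. This is not a minor technicality. If $q\in S_r\cap A$ is another nearby singular point, the segment $[p,q]$ lies entirely in the flat $A$, and each intermediate point $y\in[p,q]$ has a whole torus' worth of directions making angle $\pi$ with $[y,x_0]$ in the $T$-factor of $\Sigma_y\cong S^{l-1}\join T$ (Lemma \ref{l:dirjoin}); there is no canonical way to pick one, and a naive ``limit from the Riemannian region'' is not well-defined because the limiting direction depends on the direction of approach. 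Without a concrete resolution, the contractibility of a neighborhood of $p$ in $S_r$ is not established.

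The paper sidesteps this entirely with a different idea that is worth internalizing. Rather than building a homotopy inside $S_r$ directly, one moves the center to a nearby point $q$ and transports the problem to the space of directions $\Sigma_q$. The crucial trick is the \emph{choice} of $q$ in the singular case: instead of sliding $q$ toward $x_0$ (which would take $q$ out of $\Xi$ and shrink the log-injectivity radius at $q$ to roughly $d(q,\Xi)=d(q,p)$, failing to engulf $p$), one slides $q$ \emph{within} $A$, toward a point $z\in A$ with $d(x_0,z)<r$ (such $z$ exists precisely because $d(x_0,A)<r$). Since $q$ remains in $\Xi$, Lemma \ref{l:nicelocal} guarantees $\linj(q)\geq\delta$ independent of $q$'s proximity to $p$, so $p$ lies in a log-injective ball around $q$. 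One then takes $C=B\cap B_r$ for a small closed ball $B$ around $q$; $C$ is strictly convex, $q$ lies in its interior, and $C$ sits inside a log-injective neighborhood of $q$, so Lemma \ref{l:dconvex} gives a homeomorphism $h\co\mathrm{Front}(C)\to\Sigma_q$. A neighborhood of $p$ in $S_r$ is then carried by $h$ to an open subset of $\Sigma_q$, which is either a round $S^n$ or the join $S^{l-1}\join T$ from Lemma \ref{l:dirjoin}; both are locally contractible. This reads off local contractibility (and, incidentally, density of the manifold points near $p$) directly, with no need to make a continuous selection among non-unique geodesic continuations.

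If you want to repair your argument rather than adopt the paper's, the thing to understand is that you cannot center the ``engulfing'' ball at a point off $A$: you must stay inside the singular flat so as to keep the log-injectivity radius bounded below by $\delta$, and you should then invoke Lemma \ref{l:dconvex} rather than attempt a hands-on radial homotopy through the singular locus.
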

\begin{proof}
We first note that $S_r$ is finite dimensional, as it is embedded in
the $(n+1)$--dimensional space $X$.  It is obviously locally compact,
metrizable, and separable, so we need only establish that $S_r$ is an
ANR to prove the first part of the lemma.
A theorem of Dugundji \cite{Du} states that a finite dimensional
metric space is an ANR if and only if it is locally contractible.  We
therefore examine the local behavior of $S_r$.
Let $p\in S_r$.  
\begin{claim}
  There is some $q\in\mathring{B}_r$ so that $p$ is contained in the
  interior of a
  log-injective ball centered at $q$.
\end{claim}
\begin{proof}
  There are two cases, depending on whether $p$ is in the singular set $\Xi$
  or not.  Suppose first that $p\notin \Xi$, and let $d =
  d(p,\Xi)$.  Let $q\neq p$ be a point on the geodesic from $p$ to $x_0$,
  so that $d(p,q)<d/3$.  The point $q$ is no closer than
  $\frac{2}{3}d$ from $\Xi$, so the log-injectivity radius at $q$
  is at least $\frac{2}{3} d< d(p,q)$, by Lemma \ref{l:nicelocal}.  It
  follows that $p$ is in a log-injective ball about $q$.

  In the second case, $p$ is contained in some component $A$ of the
  singular set.  By the hypothesis of the lemma $d(x_0,A)< r$; let
  $z\in A$ satisfy $d(x_0,z)<r$.  Choose $q\neq p$ on the geodesic from $p$ to
  $z$ so that $d(p,q)<\delta$, where $\delta$ is as in Lemma
  \ref{l:nicelocal}.  
  Since $A$ is convex, $q\in A$.  Applying Lemma \ref{l:nicelocal}, 
  the log-injectivity radius at $q$ is at least $\delta$, and so $p$
  is contained in a log-injective ball centered at $q$.
\end{proof}
Let $U$ be the log-injective ball around $q$ given by the claim, and
let $B$ be a slightly smaller closed
ball around $q$ so that $p$ is still in
the interior of $B$.  
The intersection $C=B\cap B_r$ is a strictly
convex set containing $q$ in its interior, so that $C$ is contained in
a log-injective neighborhood of $U$.  Moreover, $X$ has the geodesic
extension property by Lemma \ref{l:gepX}.
We can therefore apply Lemma
\ref{l:dconvex} to obtain a homeomorphism 
\[ h\co \mathrm{Front}(C)\to \Sigma_q\]
from the frontier of $C$ to
the space of directions $\Sigma_q$ at $q$.  This space of directions
is always either a round $n$--sphere or (using Lemma \ref{l:dirjoin}) 
an $n$--pseudomanifold isometric to the join of
a sphere and a torus.  In particular, it is locally contractible.  

Let $s$ be the radius of the ball $B$, and choose a positive 
$\epsilon < s-d(p,q)$.  The open $\epsilon$--ball around $p$ intersects the
frontier of $C$ in a set $V\subset S_r$, which is an open neighborhood
of $p$ in $S_r$.  Using the homeomorphism $h$ to the locally
contractible space $\Sigma_q$, we may find an open contractible
$V'\subset V$ containing $p$.

To see that $S_r\setminus \Xi$ is a manifold, note that, in the
argument just made, if $p\notin \Xi$, then $q\notin\Xi$, and $h$
takes a neighborhood of $p$ in $S_r$ to an open set in the sphere
$S^n$.  Even for $p\notin \Xi$, the local description makes it clear
that $p$ is in the closure of the set of manifold points of $S_r$.
\end{proof}

\begin{definition}
For any $r$, there is a map $p_r$ from the exterior of
$S_r$ to $S_r$ defined by geodesic projection toward $x_0$:  
If $x$ is any point lying further than $r$ from
$x_0$, then the geodesic from $x$ to $x_0$ intersects $S_r$ in the
unique point $p_r(x)$.    
\end{definition}

\begin{definition}\label{d:good}
A \emph{shell} is a set of the form
\[ S_a^b = \{x\in X\mid a\leq d(x,x_0)\leq b\} = B_b\setminus \mathring{B}_a.\]
A shell $S_a^b$ is \emph{good} if it satisfies all of the following:
\begin{enumerate}
\item\label{reg} If $\delta$ is the constant fixed after Lemma
  \ref{l:nicelocal}, then $b-a<\delta$.
\item\label{goodint} Whenever $A$ is an $l$--dimensional component
  of $\Xi$ and $c\in \{a,b\}$, then $A\cap S_c$  is either
  empty or a (topological) sphere of dimension $l-1$.
\item\label{embed} The map $p_a$ restricted to $\Xi\cap S_a^b$ is
  an embedding. 
\end{enumerate}
\end{definition}
The following observation will be useful later.
\begin{lemma}\label{l:subgood}
Suppose that $a\leq s < t\leq b$.  If $S_a^b$ is good, then
either $S_s^t$ is good, or $d(x_0,A)\in \{s,t\}$ for some
component $A$ of the singular set $\Xi$.
\end{lemma}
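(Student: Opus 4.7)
The plan is to verify the three clauses of Definition \ref{d:good} for $S_s^t$ under the assumption that $d(x_0,A)\notin\{s,t\}$ for every component $A$ of $\Xi$, which is exactly the negation of the second alternative in the conclusion. Clauses (1) and (3) will transfer almost immediately from $S_a^b$; only clause (2) requires any real content, and it reduces to a simple observation about proper convex functions on Euclidean space.

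Clause (1) is immediate since $t-s\leq b-a<\delta$. For clause (3), observe that for any $x\in\Xi\cap S_s^t$ the geodesic from $x$ to $x_0$ meets $S_s$ at $p_s(x)$ and then meets $S_a$ at $p_a(x)$, so $p_a(x)=p_a(p_s(x))$. This yields the factorization
\[ p_a|_{\Xi\cap S_s^t} = (p_a|_{S_s})\circ p_s|_{\Xi\cap S_s^t}. \]
Since $S_a^b$ is good, $p_a|_{\Xi\cap S_a^b}$ is an embedding, and hence so is its restriction $p_a|_{\Xi\cap S_s^t}$; the factorization then forces $p_s|_{\Xi\cap S_s^t}$ to be injective. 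It is continuous because geodesic projections in CAT$(0)$ spaces are, and its domain is compact ($\Xi$ is closed and $S_s^t$ is a closed bounded subset of the proper space $X$). A continuous injection from a compact space to a Hausdorff one is an embedding, establishing clause (3).

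For clause (2), fix $c\in\{s,t\}$ and an $l$-dimensional component $A$ of $\Xi$. If $l=0$ then $A$ is a singular point, and $A\cap S_c=\emptyset$ (the empty $(-1)$-sphere) since $c\neq d(x_0,A)$. For $l\geq 1$ the flat $A$ is isometric to $\E^l$, and the restriction $f=d(x_0,\cdot)|_A$ is continuous, convex, and proper (since $X$ is proper). If $c<\min f=d(x_0,A)$ then $A\cap S_c=\emptyset$. Otherwise $c>\min f$, and $A\cap B_c=f^{-1}((-\infty,c])$ is a closed, bounded, convex subset of $A\cong\R^l$. It has nonempty interior, because at any $p\in A$ with $f(p)=\min f<c$ continuity of $f$ provides a neighborhood on which $f<c$. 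Hence $A\cap B_c$ is homeomorphic to a closed $l$-ball, and its topological boundary in $A$, which coincides with $A\cap S_c$, is an $(l-1)$-sphere.

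The only step that deserves any care is this nonempty-interior argument in clause (2); the rest follows mechanically from goodness of $S_a^b$ and the standard CAT$(0)$ facts already assembled. In particular no strict convexity of $f$ on $A$ (and no use of the isolated flats structure beyond producing the flats $A$ themselves) is needed.
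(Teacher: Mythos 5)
The paper states this lemma as an unproved ``observation,'' so there is no proof in the source to compare against; your job was therefore to supply one, and what you have written is correct and complete. The contrapositive structure is the right one: assuming $d(x_0,A)\notin\{s,t\}$ for every component $A$, you verify the three clauses of Definition \ref{d:good} for $S_s^t$. Clause (1) is immediate, and your factorization argument for clause (3) is clean: $p_a|_{\Xi\cap S_s^t}$ is a restriction of the embedding $p_a|_{\Xi\cap S_a^b}$ and hence injective, the factorization $p_a = (p_a|_{S_s})\circ p_s$ (valid because the geodesic from $x$ to $x_0$ passes through $p_s(x)$ and then $p_a(x)$) forces injectivity of $p_s|_{\Xi\cap S_s^t}$, and compactness of $\Xi\cap S_s^t$ (closed subset of the closed bounded $S_s^t$ in the proper space $X$) plus Hausdorffness upgrades this to an embedding.

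For clause (2) your argument is slightly more elementary than what the paper does in the parallel step of Lemma \ref{l:good}: there the authors observe that $B_c\cap A$ is \emph{strictly} convex and invoke Lemma \ref{l:dconvex} to identify $\partial B$ with $\Sigma_p\cong S^{l-1}$. You instead observe that for a flat $A\cong\E^l$ the sublevel set $A\cap B_c$ is simply a compact convex body in $\R^l$ (convexity of $d(x_0,\cdot)$ plus properness, with nonempty interior because $c>\min f$), and the boundary of a compact convex body in $\R^l$ is an $(l-1)$-sphere by elementary means. Both routes are fine; yours avoids the log-injectivity machinery and works directly with the Euclidean structure of the flat. The one detail worth spelling out in a formal write-up is that the topological boundary of $A\cap B_c$ in $A$ equals $A\cap S_c$ (i.e.\ that every $x$ with $f(x)=c$ is a boundary point, which follows from convexity of $f$ and the existence of a point where $f<c$ on the ray from the minimizer through $x$), but this is routine and your remark already gestures at it.
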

Components of the singular set have very nice intersections with
good shells:
\begin{lemma}\label{l:intersect}
Suppose $S_a^b$ is a good shell.
Let $A$ be a component of the singular set of dimension $l$ which
intersects $S_a^b$ nontrivially.
If $A$
intersects $S_a^b$ but not $S_a$, then $A\cap S_a^b$ is a disk of
dimension $l$.  Otherwise $A\cap S_a$ is a sphere of dimension $l-1$ and
there is a homeomorphism $h_{a,b,A}$
from $A\cap S_a^b$ to $(A\cap S_a) \times I$.
\end{lemma}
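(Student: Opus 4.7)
The plan is to reduce the lemma to elementary facts about compact convex bodies in Euclidean space, using that $A$, being a convex subset of the CAT$(0)$ space $X$, is intrinsically isometric to $\E^l$, and that $f=d(x_0,\,\cdot\,)|_A$ is a convex function on $\E^l$. Hence each sublevel set $A\cap B_t=f^{-1}([0,t])$ is a compact (by properness of $X$) convex subset of $A$.

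For Case 1, where $A\cap S_a=\emptyset$, the set $A$ is connected and meets $S_a^b$ but misses $S_a$, so it lies entirely outside $\mathring{B}_a$, giving $A\cap S_a^b=A\cap B_b$. When $l=0$ this is a single point, a $0$-disk by convention. When $l\ge 1$, the flat $A\cong\E^l$ is unbounded, so the compact convex set $A\cap B_b$ has nontrivial topological boundary in $A$, necessarily contained in $A\cap S_b$; condition \eqref{goodint} identifies this boundary as an $(l-1)$-sphere, and any compact convex subset of $\E^l$ whose topological boundary is an $(l-1)$-sphere is a topological $l$-disk.

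For Case 2, where $A\cap S_a\neq\emptyset$, the convention in the preceding remark rules out $l=0$ (for $l=0$, \eqref{goodint} would force $A\cap S_a=\emptyset$), and \eqref{goodint} identifies $A\cap S_a$ as an $(l-1)$-sphere. The same unboundedness argument as in Case 1 shows that $A\cap S_b$ is likewise a nonempty $(l-1)$-sphere. Writing $D_c=A\cap B_c$ for $c\in\{a,b\}$, the sets $D_a\subset D_b$ are nested compact convex $l$-disks in $A\cong\E^l$ whose boundary spheres are disjoint (they lie on metric spheres of different radii in $X$), and $A\cap S_a^b=D_b\setminus\mathring{D_a}$ is the closure of the region between two such bodies.

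To produce $h_{a,b,A}\co A\cap S_a^b\to (A\cap S_a)\times I$ I would use the classical radial parametrization: choose an interior point $c\in D_a$; by convexity of $D_a$ and $D_b$, every ray from $c$ meets each of $\partial D_a$ and $\partial D_b$ in exactly one point, giving continuous maps $\pi_a\co D_b\setminus\mathring{D_a}\to\partial D_a$ and $\pi_b\co D_b\setminus\mathring{D_a}\to\partial D_b$, and the formula
\[
  h_{a,b,A}(x)=\Bigl(\pi_a(x),\ \tfrac{\|x-\pi_a(x)\|}{\|\pi_b(x)-\pi_a(x)\|}\Bigr)
\]
yields the desired homeomorphism. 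The only point to verify is continuity and bijectivity of the radial parametrization; this follows routinely from the compactness of $D_a,D_b$ and the disjointness of their boundaries, so I anticipate no serious obstacle.
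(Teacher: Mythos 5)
Your proposal is correct, and it follows essentially the same strategy as the paper: both reduce the lemma to the observation that $A$ is intrinsically $\E^l$, that $d(x_0,\cdot)|_A$ is a proper strictly convex function, and that the sublevel sets $A\cap B_t$ are therefore compact convex bodies, with boundary $A\cap S_t$. The only genuine difference is in the construction of the homeomorphism $h_{a,b,A}$ in the second case. You use the classical radial parametrization from an arbitrary interior point $c$ of $D_a=A\cap B_a$, with the second coordinate the affine parameter along the ray from $c$. The paper instead projects geodesically toward the canonical point $q$ (the unique minimizer of $d(x_0,\cdot)$ on $A$) and takes the second coordinate to be $d(p,x_0)$ affinely rescaled to $[0,1]$. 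Both give valid homeomorphisms restricting to the identity on $A\cap S_a$, which is all that Proposition~\ref{p:pinch} requires when it patches the $h_{a,b,A}$ together. The paper's choice has the small advantage that the level sets of the second coordinate are exactly the slices $A\cap S_t$ for $t\in[a,b]$, which makes the relation to the shell filtration transparent, whereas your version uses an off-the-shelf annulus coordinate that does not align with the intermediate spheres. Neither of these is a gap; your argument compiles into a correct proof.
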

\begin{proof}
Distance to the basepoint $d(-,x_0)$ is a proper strictly convex
function on
$A\cong \E^l$.  If $A$ does not intersect $S_a$, then $A\cap S_a^b$ is a
sublevel set of this strictly convex function, so $A\cap S_a^b$ is
strictly convex.  Since $d(A,x_0)<b$, the set $A\cap S_a^b$ has
nonempty interior (as a subset of $A$), so it is an $l$--disk.

Now suppose that $A$ intersects $S_a$ as well as $S_b$.  Let $q$ be the
closest point on $A$ to $x_0$.  We define the first coordinate of 
\[ h_{a,b,A}\co A\cap S_a^b \to ( A\cap S_a ) \times I \]
by geodesic projection toward $q$; the second coordinate is the
distance to $x_0$, affinely rescaled to lie in $[0,1]$.  Explicitly,
\[ h_{a,b,A}(p) = \left( z , \frac{d(p,x_0)-a}{b-a} \right), \]
where $z$ is the unique point in $S_a$ on a geodesic from $p$ to $q$.
The convexity of $A$ and of the distance function make this map a
homeomorphism.
\end{proof}
The next lemma says that there are plenty of good shells.
\begin{lemma}\label{l:good}
For every $r>0$ there is some $\epsilon>0$ so that the shell
$S_{r-\epsilon}^{r+\epsilon}$ is good.
\end{lemma}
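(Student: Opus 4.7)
My plan is to handle the three conditions for being a good shell separately, with condition (3) --- injectivity of the radial projection $p_a$ on $\Xi\cap S_a^b$ --- being the main difficulty.

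For condition (1), I would simply require $\epsilon<\delta/2$, where $\delta$ is the constant from Lemma \ref{l:nicelocal}. For condition (2), I would first use properness of $X$ together with cocompactness of $\pi_1(Y)$ and the fact (from the proof of Proposition \ref{p:iflats}) that distinct components of $\Xi$ are separated by at least $2+\lambda$, to conclude that only finitely many components $A_1,\ldots,A_N$ of $\Xi$ meet the compact ball $B_{r+\delta/2}$. Each $A_i$ is a totally geodesic flat isometric to $\E^{l_i}$, so $d(\cdot,x_0)|_{A_i}$ is strictly convex and proper; its level set $A_i\cap S_c$ is empty if $c<d(A_i,x_0)$, a single point if $c=d(A_i,x_0)$, and a topological $(l_i-1)$-sphere if $c>d(A_i,x_0)$. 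Condition (2) is then ensured by further requiring that neither $r-\epsilon$ nor $r+\epsilon$ equals any of the finitely many numbers $d(A_i,x_0)$, which is possible since the set of forbidden $\epsilon$ values is finite.

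The interesting case is condition (3), which I would establish by contradiction and a compactness/limit argument. Assume no suitable $\epsilon$ exists. Then there would be a sequence $\epsilon_n\to 0$ (each satisfying (1) and (2)) and points $y_n\neq z_n$ in $\Xi\cap S_{r-\epsilon_n}^{r+\epsilon_n}$ with $p_{r-\epsilon_n}(y_n)=p_{r-\epsilon_n}(z_n)$; after relabeling, $z_n\in[x_0,y_n]$. Since only finitely many components of $\Xi$ meet $B_{r+\delta/2}$, after passing to a subsequence one may assume $y_n\in A_i$ and $z_n\in A_j$ for fixed indices $i,j$, with $y_n\to y_*$ and $z_n\to z_*$ in the compact ball and $d(x_0,y_*)=d(x_0,z_*)=r$. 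Continuity of geodesics in the proper CAT$(0)$ space $X$ yields $z_*\in[x_0,y_*]$, and the equality of distances forces $z_*=y_*=:w\in A_i\cap A_j$. Disjointness of components of $\Xi$ then gives $A_i=A_j=:A$, so both $y_n$ and $z_n$ lie in the single flat $A$.

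The crux, and the main obstacle, is the observation that such a single flat $A$ cannot contain two distinct points $y\neq z$ with $z\in[x_0,y]$. I plan to argue this as follows: by convexity of $A$ in $X$, the subsegment $[z,y]$ lies in $A$. Now the geodesic of $X$ extending $[y,z]$ past $z$ is the continuation of $[x_0,y]$ past $z$, which terminates at $x_0$; the geodesic of $A$ extending $[y,z]$ past $z$ continues indefinitely inside the complete flat $A\cong\E^l$. Both are geodesics of $X$ (since $A$ is totally geodesic in $X$) sharing starting point $z$ and initial direction, so by the standard uniqueness of geodesics in CAT$(0)$ they coincide on their common domain. In particular, the $A$-extension passes through $x_0$ at time $d(x_0,z)$, forcing $x_0\in A\subset\Xi$ --- contradicting the standing assumption $x_0\notin\Xi$. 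Preventing this ``re-entry'' phenomenon is precisely where the CAT$(0)$ structure and the totally geodesic nature of the singular flats pull their weight.
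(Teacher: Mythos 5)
Your handling of conditions (1) and (2) matches the paper's in spirit (the paper observes the set of distances $\{d(x_0,A)\}$ is discrete rather than appealing to cocompactness, but the point is the same). The issues are in your treatment of condition (3).

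The central gap is the claim that ``after relabeling, $z_n\in[x_0,y_n]$.'' This does not follow from $p_{r-\epsilon_n}(y_n)=p_{r-\epsilon_n}(z_n)$. Equal projections only tell you that $[x_0,y_n]$ and $[x_0,z_n]$ agree on the initial segment $[x_0,w_n]$, where $w_n$ is the common point on $S_{r-\epsilon_n}$; beyond $w_n$ the two geodesics are free to diverge, and neither of $y_n$, $z_n$ need lie on the other's geodesic to $x_0$. This divergence can genuinely occur: at a singular point $w$ whose space of directions is $S^{l-1}\join T^k$, a generic direction $[w,x_0]$ (with join angle $\phi\in(0,\pi/2)$) has infinitely many antipodes $(\phi,-\alpha,\theta')$ with $d_T(\theta,\theta')\geq\pi$, so several geodesics can continue $[x_0,w]$ past $w$. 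The paper confronts this head-on: it defines $z$ to be the endpoint of the maximal common initial segment, splits into the cases $z\in\{x,y\}$ and $z\notin\{x,y\}$, and in the latter case uses injectivity of $\log_z$ together with the fact that $\Sigma_z$ is not a round sphere to conclude that $z$ itself lies in a singular component, which must equal $A$, thereby reducing to the first case. Your compactness/limit setup (different from the paper's direct argument) establishes that $y_n$ and $z_n$ lie in the same flat $A$ for large $n$, which is fine, but it does nothing to close this Case~2 gap.

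A secondary imprecision: in the ``crux'' you assert that the $X$-continuation of $[y,z]$ past $z$ and the $A$-extension coincide ``by the standard uniqueness of geodesics in CAT$(0)$,'' but uniqueness of geodesics in CAT$(0)$ is a statement about prescribed endpoints, not prescribed initial directions, and in singular spaces two distinct geodesic rays from a point can share a direction-$\pi$ relation with a third segment. What is actually needed (and what the paper proves) is that the antipode of a direction in $\Sigma_z(A)\cong S^{l-1}\subset S^{l-1}\join T^k=\Sigma_z(X)$ is unique and again lies in $S^{l-1}$; this uses the explicit spherical-join formula for the link, not generic CAT$(0)$ facts. Once you know the initial directions agree, you still need log-injectivity near $z$ (Lemma \ref{l:nicelocal}) to promote agreement of directions to agreement of germs before invoking the no-bigons property of CAT$(0)$. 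So the conclusion of your crux is correct, but the cited justification is not the right one.
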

\begin{proof}
Let $\delta$ be the smallest radius of a standard neighborhood of a
component of the singular set of $X$.  The distance between two
distinct components is thus greater than $2\delta$.  
It follows that if $\epsilon< \delta/2$, then
\[p:=p_{r-\epsilon}|_{\Xi\cap S_{r-\epsilon}^{r+\epsilon}}\]
cannot send points from different
singular components to the same point in $S_{r-\epsilon}$.

Let $\mathcal{A}$ be the collection of components of the singular set
$\Xi$, and note that the set 
\[S:=\{s\mid \exists A\in \mathcal{A},\ d(x_0,A)=s\}\] 
is a discrete subset of $[0,\infty)$.  We may therefore choose
$\epsilon < \min\{\delta/2,r\}$ so that neither $r-\epsilon$ nor 
$r+\epsilon$ is in $S$.  Condition \eqref{reg} is obviously satisfied for any
such $\epsilon$.  We will show that \eqref{goodint} and
\eqref{embed} are as well.

Suppose $A\in\mathcal{A}$ intersects $S_c$ nontrivially for $c\in
\{r+\epsilon,r-\epsilon\}$, and let $B = B_c\cap A$ be the
intersection of $A$ with the ball of radius $c$ around $x_0$.
Since $c > d(A,x_0)$, the set $B$ has non-empty interior (as a
subset of $A$), and since both $A$ and the metric are convex, $B$ is
a strictly convex subset of $A$.  Lemma \ref{l:dconvex} implies that
$\partial B = A\cap S_c$ is an $(l-1)$--sphere, so condition
\eqref{goodint} of Definition \ref{d:good} is satisfied.

Finally, suppose that
that $S_{r-\epsilon}^{r+\epsilon}$ does not satisfy condition
\eqref{embed} of Definition \ref{d:good}.  There
is then some $A \in \mathcal{A}$ and two points $x$, $y\in A\cap
S_{r-\epsilon}^{r+\epsilon}$ so that $p(x)=p(y)$.  Since $p(x)=p(y)$,
the geodesics $[x_0,x]$ and $[x_0,y]$ must coincide on some (maximal) initial
segment $[x_0,z]$ of length at least $r-\epsilon$.

There are two cases:

\noindent
{\bf Case 1:}
Suppose that $z\in \{x,y\}$.  Without loss of generality, suppose that $z=x$,
and rechoose $x$ so that $x$ is the closest point on
$[x_0,x]\cap A=\{x\}$.  Since $A$ is convex,
the geodesic $[x,y]$ lies entirely in $A$, so $[x,y]$ represents a
point in $S^{n-k-1}\subset S^{n-k-1}\join T^k = \Sigma_x$.
Since $[x_0,y]$ is geodesic, the geodesic
sub-segments $[x,y]$ and $[x,x_0]$ must
have Alexandroff angle $\pi$ at $x$.  This implies that $[x,x_0]$ also
lies in $S^{n-k-1}\subset S^{n-k-1}\join T^k = \Sigma_x$, but since $A$
has a standard neighborhood, this implies that $[x,x_0]$, and hence
$x_0$, lies in $A$.  But this contradicts the fact that $x_0$
lies outside the singular set.

\noindent
{\bf Case 2:}
Suppose $z\notin \{x,y\}$.  The directions in $\Sigma_z$ corresponding
to $[z,x]$ and $[z,y]$ are both at distance $\pi$ from the direction
corresponding to $[z,x_0]$.  Since the logarithm map is injective near
$z$, these directions are all distinct in $\Sigma_z$.  In particular,
$\Sigma_z$ is not a round sphere of diameter $\pi$ and so $z$ must lie in some
component of the singular set $A'$.
Since $p(z)=p(x)=p(y)$, we must have $A'=A$.
Replacing either $x$ or $y$ by $z$, we may derive a contradiction as
in Case 1.

This establishes condition \eqref{embed}.
\end{proof}

To understand the difference between the two boundary components of a
good shell, we have to understand how geodesics can be continued
through a piece of the singular set.
In a simply connected Riemannian manifold of non-positive curvature, geodesics 
(in the sense of length-minimizing paths) can always be
continued uniquely.  At a point in a singular space
whose space of directions is not a round sphere of diameter $\pi$ 
(or a point which does not have a log-injective
neighborhood) this uniqueness generally fails.
The next lemma gives a description of the collection of geodesics of a
fixed length starting at a point and passing through some particular
component of the singular set.
\begin{lemma}\label{l:reverseshadow}
Let $A$ be a component of the singular set isometric to $\E^l$, and
let $k=n-l$.  Choose $R$ so that
\[ d(x_0,A) < R < d(x_0,A)+\delta.\] 
If $Z$ is the set of endpoints of geodesics of length $R$ starting at
$x_0$ and meeting $A$, then $Z$ is homeomorphic to $ S^{l-1} \join P^k$,
where $P^k$ is a flat $k$--dimensional torus minus the interior of a
closed embedded $k$--dimensional ball.
\end{lemma}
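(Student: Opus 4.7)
The plan is to work inside the standard neighborhood $U=[0,\delta]\times_{\cosh}A\times_{\sinh}T^k$ of $A$ and to parametrize $Z$ by pairs (first contact point $p\in A$, continuation direction $v_+\in\Sigma_p$). The hypothesis $R<d(x_0,A)+\delta$ forces every post-contact segment of a geodesic from $x_0$ meeting $A$ to lie in $U$, which is convex in $X$ as a sublevel set of the convex function $d(\cdot,A)$. By CAT(0) uniqueness of geodesics, each $z\in Z$ is the endpoint of a unique shortest geodesic from $x_0$, and if this geodesic meets $A$ it does so at a unique point $p\in A\cap B_R$, after which it continues in some direction $v_+\in\Sigma_p$ satisfying the $\pi$-angle condition $\angle(v_-(p),v_+)=\pi$, where $v_-(p)\in\Sigma_p$ is the direction at $p$ of the shortest geodesic from $p$ to $x_0$. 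Conversely, every such $(p,v_+)$ determines a unique element of $Z$ via the exponential map, which is well-defined and injective by Corollary \ref{c:loginj}.

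Identifying $\Sigma_p\cong S^{l-1}\join T^k$ via Lemma \ref{l:dirjoin}, write $v_-(p)=(\phi(p),\alpha(p),\theta_0(p))$. Direct computation in the spherical join metric of Definition \ref{d:sphjoin} shows that $\angle(v_-,v_+)=\pi$ forces $v_+=(\phi(p),-\alpha(p),\theta_+)$ with $d_\pi(\theta_0(p),\theta_+)=\pi$, i.e., $d_T(\theta_0(p),\theta_+)\geq\pi$, the inequality arising from the truncation $d_\pi=\min\{\pi,d_T\}$ in the join metric. Since $T^k$ has injectivity radius strictly greater than $\pi$, the forbidden set $B_\pi(\theta_0(p))$ is an embedded open $k$-ball, whose complement is a copy of $P^k$. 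Because $x_0\notin A$ (so any tangent-to-$A$ geodesic from $p$ would stay in $A$ by total geodesicity), $\phi(p)>0$ for every $p\in A\cap B_R$; at the closest point $a^*\in A$ to $x_0$ one has $\phi(a^*)=\pi/2$, and $\phi(p)$ is a strictly decreasing function of $|p-a^*|$ by Lemma \ref{l:direction} applied in the half-plane slice of $U$ containing the geodesic from $p$ to $x_0$. Moreover $\theta_0(p)$ depends continuously on $p$, so the fibers $P^k(p):=T^k\setminus B_\pi(\theta_0(p))$ assemble into a continuous $P^k$-bundle over $A\cap B_R$ which, because $T^k$ is a commutative group, trivializes via the translation $\theta_+\mapsto\theta_++\theta^*-\theta_0(p)$, where $\theta^*:=\theta_0(a^*)$.

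Parametrize $A\cap B_R$ by $(r,\alpha')\in[0,r_{\max}]\times S^{l-1}$ with $p=a^*+r\alpha'$ and $r_{\max}$ determined by $d(x_0,a^*+r_{\max}\alpha')=R$. Pick a monotone homeomorphism $[0,\pi/2]\to[0,r_{\max}]$ sending $\pi/2\mapsto 0$ and $0\mapsto r_{\max}$, and let $\phi_{\mathrm{join}}\in[0,\pi/2]$ denote the corresponding join coordinate. Together with the trivialization above, this yields a map
\[\Phi:S^{l-1}\join P^k\longrightarrow Z,\qquad (\phi_{\mathrm{join}},\alpha',\tilde\theta_+)\longmapsto\exp_{p}\bigl((R-d(x_0,p))\,v_+\bigr),\]
where $p$ and $v_+=(\phi(p),-\alpha(p),\tilde\theta_+-\theta^*+\theta_0(p))$ are defined in the obvious way. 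At $\phi_{\mathrm{join}}=\pi/2$ (so $p=a^*$) the parameter $\alpha'$ is suppressed, matching the spherical join identification; at $\phi_{\mathrm{join}}=0$ (so $d(x_0,p)=R$) the continuation has length zero, the endpoint is $p\in A\cap S_R\cong S^{l-1}$, and $\tilde\theta_+$ is suppressed, again matching the join identification. Continuity of $\Phi$ comes from continuity of geodesics and of the trivializing translation; injectivity combines the uniqueness in the first step with the local injectivity of $\exp_p$; surjectivity is built in by construction. Since $S^{l-1}\join P^k$ is compact and $Z$ is Hausdorff, $\Phi$ is a homeomorphism.

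The main technical point is recognizing the antipodal set in the join $S^{l-1}\join T^k$: the truncation $d_\pi$ in the join metric of Definition \ref{d:sphjoin} turns ``antipodal'' into ``$T^k$-distance at least $\pi$'', so the fibers are copies of $P^k$ (torus minus an open $k$-ball) rather than $(k-1)$-spheres; this is exactly where the hypothesis that $T^k$ has injectivity radius greater than $\pi$ enters in an essential way, both to make $B_\pi(\theta_0(p))$ an embedded ball and to guarantee that the whole endpoint analysis takes place inside a log-injective neighborhood.
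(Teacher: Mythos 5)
Your proof is correct and takes essentially the same approach as the paper's: fiber $Z$ over $A\cap B_R$ by the first-contact point, characterize continuation directions via the angle-$\pi$ condition in the join metric on $\Sigma_p\cong S^{l-1}\join T^k$, and identify each fiber as $T^k$ minus an open $\pi$-ball. The paper builds a map $\phi\co Z\to\Sigma_a(A)\join T$ using $\pi_T$ on directions and the radial coordinate $\frac{\pi}{2}\frac{d(\psi(z),z)}{R-d(x_0,a)}$, whereas you build the inverse map from $S^{l-1}\join P^k$ into $Z$; this is only a cosmetic difference. One genuine improvement you make explicit: the fiber over $a'\in A\cap\mathring{B}_R$ is $T^k\setminus B_\pi(\theta_0(a'))$ with a basepoint $\theta_0(a')$ that varies with $a'$, so the union is a priori only a bundle of $P^k$'s, and your observation that translations of the flat torus trivialize this bundle closes a small gap that the paper elides (the paper simply asserts that the image of $\phi$ ``is homeomorphic to the join of $\Sigma_a(A)$ with $T$ minus a ball of radius $\pi$'' without addressing the varying center). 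Your appeal to Lemma \ref{l:direction} for monotonicity of $\phi(p)$ in $|p-a^*|$ is harmless but unnecessary (and not directly applicable since $x_0$ lies outside the standard neighborhood); you don't actually use that monotonicity anywhere, since your join coordinate is built from the radial coordinate $r=|p-a^*|$, not from $\phi(p)$.
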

\begin{proof}
  By the construction of
Section \ref{s:basic}, there is a standard neighborhood of $A$
which we may identify with the warped product:
\[[0,\delta]\times_{\cosh(r)}A\times_{\sinh(r)}T \]
for some flat $k$--torus $T$ whose injectivity radius is greater than $\pi$.
Let $a\in A$ be the closest point to $x_0$, and let $a'\in A$ be arbitrary.  Via
Proposition \ref{p:dirwarp}, we may identify the space of directions
$\Sigma_{a'}(X)$ with 
$\Sigma_a(A)\join T$.  
Let 
\[\pi_T  \co \Sigma_{a'}(X)\smallsetminus \Sigma_{a'}(A) \to T \]
be the canonical projection onto the $T$ factor.  
Finally, let $\psi\co Z\to A$ send a point $z\in Z$ to the unique point
in $A$ on the geodesic from $z$ to $x_0$.

We can then define a map
$\phi\co Z\to \Sigma_a(A)\join T = [0,\frac{\pi}{2}]\times
\Sigma_a(A)\times T / \sim$
by 
\[\phi(z) = \left(\frac{\pi}{2}\frac{d(\psi(z),z)}{R-d(x_0,a)},
      [a,\psi(z)], \pi_T([\psi(z),z])\right),\]
abusing notation in the second and third factor by writing a geodesic
segment with a given direction instead of the direction itself.
Because geodesics are continuous as their endpoints are varied, $\phi$
is a continuous map onto some closed subset of $\Sigma_a(A)\join T$.
Note
here that the first two factors are essentially the polar coordinates
centered at $a$ of $\psi(z)$, reparametrized in the radial direction.
To check injectivity of $\phi$, we therefore only need to check
injectivity of $\phi$ restricted to $\psi^{-1}(a')$ for $a'\in A\cap
B_R$.  If $d(a',x_0)=R$, there is 
nothing to check, so it suffices to verify the following.
\begin{claim}
For $a' \in A$ with $d(a',x_0)<R$, $\phi$ restricted to
$\psi^{-1}(a')$ is an embedding, and $\psi^{-1}(a')$ is homeomorphic
to $T$ minus an open ball of radius $\pi$.
\end{claim}
\begin{proof}
For any $z\in \psi^{-1}(a')$, the geodesic from $x_0$ to $z$ is
composed of two subsegments $[a',x_0]$ and $[a',z]$.  Regarded as
directions in $\Sigma_{a'}(X)$, these subsegments must subtend an
angle of $\pi$.  Conversely, any direction which is Alexandroff angle
$\pi$ from $[a',x_0]$ is the direction of the geodesic continuation of
$[x_0,a']$ on to some unique $z\in Z$.  (Uniqueness follows from
log-injectivity at $a'$ of the $\delta$--neighborhood of $A$, in which
$Z$ is contained.)  To sum up, $\log_{a'}$ is an embedding restricted
to $\psi^{-1}(a')$, with image $D$, the set of directions whose angle with
$[a',x_0]$ is exactly $\pi$.

Identifying $\Sigma_{a'}(X)$ with $\Sigma_a(A)\join T$, suppose that
$[a',x_0] = (\phi,\alpha,\theta)$ in the standard join coordinates.
Because $x_0\notin A$, $\phi\neq 0$.  Standard trigonometric
identities can be used to show that those $(\phi',\alpha',\theta')$
which make
an angle of $\pi$ with $[a',x_0]$ are exactly those with
 $\phi'= \phi$, $\alpha'=-\alpha$, and $d(\theta',\theta)\geq \pi$.
It follows that $\pi_T(\log_{a'}(\psi^{-1}(a')))$ is $T$ minus an open
ball of radius $\pi$ around $\theta$.
\end{proof}
It follows that the image of $\phi$ is homeomorphic to the join of
$\Sigma_a(A)$ with $T$ minus a ball of radius $\pi$.  Since $\phi$ was
a homeomorphism onto its image, and $\Sigma_a(A)$ is an
$(l-1)$--sphere, the lemma is proved.
\end{proof}
\begin{remark}
  The space $S^{l-1}\join P^k$ described in Lemma
  \ref{l:reverseshadow} is a pseudomanifold with boundary homeomorphic
  to $S^{l-1}\join S^{k-1}\cong S^{n-1}$.
\end{remark}

\begin{definition}\label{d:connectsum}
Let $Y_1$ and $Y_2$ be spaces containing $U_1\subseteq Y_1$ and
$U_2\subseteq Y_2$ which are open, connected, dense, and homeomorphic
to oriented $n$--manifolds.  Then the \emph{connect sum} $Y_1\mathrel{\#} Y_2$ is the space
obtained by choosing small closed $n$--balls $D_1\subset U_1$ and
$D_2\subset U_2$, choosing an orientation reversing homeomorphism $\phi\co \partial
D_1\to\partial D_2$, and gluing the exteriors together thus:
\[ Y_1\mathrel{\#} Y_2 := (Y_1\smallsetminus\mathring{D}_1)\cup_\phi
(Y_1\smallsetminus\mathring{D}_2) .\]
For $\{i,j\}=\{1,2\}$ there is a (unique up to homotopy) map $q_{Y_i}\co
Y_1\mathrel{\#}Y_2\to Y_i$ which takes
$Y_i\smallsetminus\mathring{D}_i$ to itself by the identity map, and
sends $Y_j\smallsetminus\mathring{D}_j$ onto $D_i$ by a degree one
map.  We refer to $q_{Y_i}$ as \emph{the map which pinches $Y_j$ to a point.}
\end{definition}

We remark that the operation of connect sum described in Definition
\ref{d:connectsum} is associative, so that the connect sum of three or
more spaces is well-defined.  Moreover, if
$W=(A\mathrel{\#}B)\mathrel{\#}C \cong A\mathrel{\#}(B\mathrel{\#}C)$, then
the map which pinches $B\mathrel{\#}C$ to a point is homotopic to the
composition of the map which pinches $B$ to a point with the map which
pinches $C$ to a point.

\begin{proposition}\label{p:pinch}
Suppose $S_a^b$ is a good shell, and let
$A_1,\ldots,A_p$ be the
components of the singular set which intersect $S_a^b$ but not $S_a$.
If the dimension of $A_i$ is $l_i=n-k_i$ for each $i$, then there is a
homotopy equivalence $\phi$ from
$S_b$ to the connect sum $S_a\mathrel{\#} J$, where
\begin{equation}
J = \mathop{\#}_{i=1}^p S^{l_i-1}\join T^{k_i}.
\end{equation}
Moreover,
the projection map $p_a^b = p_a|_{S_b}$ fits into a homotopy
commutative triangle
\cd{S_b\ar[dr]_{p_a^b}\ar[rr]^\phi & & S_a\mathrel{\#} J\ar[dl]^{q_{S_a}}\\
& S_a & }
where $q_{S_a}$ is the map which pinches $J$ to a point.
\end{proposition}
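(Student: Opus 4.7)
The plan is to build $\phi$ by combining the local picture of Lemma~\ref{l:reverseshadow} with the projection $p_a^b$ as a backbone. For each new component $A_i$, let $Z_i\subseteq S_b$ be the set of endpoints at distance $b$ of geodesics from $x_0$ meeting $A_i$, and let $D_i:=p_a(A_i\cap S_a^b)\subseteq S_a$. Lemma~\ref{l:reverseshadow} identifies $Z_i\cong S^{l_i-1}\join P^{k_i}$, with $P^{k_i}$ equal to $T^{k_i}$ minus an open ball of radius $\pi$; in particular $Z_i$ is an $n$-dimensional pseudomanifold with $(n-1)$-sphere boundary, homeomorphic to the complement of a small open $n$-ball in the manifold locus of $S^{l_i-1}\join T^{k_i}$. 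By Lemma~\ref{l:intersect} together with the goodness conditions \eqref{goodint} and \eqref{embed}, $A_i\cap S_a^b$ is an $l_i$-disk and $D_i$ is an embedded $l_i$-disk in $S_a$, disjoint from the $p_a$-images of all other singular components meeting the shell.

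Choose closed $n$-balls $N_i\subseteq S_a\setminus\Xi$ with $D_i\subseteq N_i^\circ$, pairwise disjoint; this is possible because each $D_i$ is contractible and, by the previous paragraph and Lemma~\ref{l:enr}, lies in the open dense manifold locus of $S_a$. Then the target decomposes as
\[S_a\mathrel{\#} J=\Bigl(S_a\setminus\bigcup_iN_i^\circ\Bigr)\cup_\partial\bigsqcup_i\bigl((S^{l_i-1}\join T^{k_i})\setminus B'_i\bigr)\]
for appropriately chosen small open $n$-balls $B'_i$ in the manifold locus of each join summand. Define $\phi\co S_b\to S_a\mathrel{\#} J$ to equal the chosen homeomorphism $Z_i\cong(S^{l_i-1}\join T^{k_i})\setminus B'_i$ on each $Z_i$, and to equal $\rho\circ p_a^b$ on $S_b\setminus\bigcup_iZ_i^\circ$, where $\rho\co S_a\to S_a\setminus\bigcup_iN_i^\circ$ is a retraction collapsing each $N_i$ onto $\partial N_i$, arranged so that the two prescriptions agree along the boundary spheres $\partial Z_i\cong\partial N_i$.

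The technical heart is the claim that $p_a^b$ restricts to a homeomorphism from $S_b\setminus\bigcup_iZ_i^\circ$ onto $S_a\setminus\bigcup_iD_i$. Two distinct length-$b$ extensions of a single geodesic $[x_0,y]$ can only diverge at a point of $\Xi$, by the uniqueness of geodesic continuation away from the singular set; combining this with goodness condition \eqref{embed} and the cylinder structure of Lemma~\ref{l:intersect} for old components intersecting $S_a$, any such divergence forces the common projection to lie in some $D_i$, so that the corresponding endpoints in $S_b$ are in $Z_i$. This is the step which I expect to be the main obstacle, and which I expect to coincide with the claim credited to Ric Ancel in the acknowledgments. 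Given it, standard gluing of homotopy equivalences along the $(n-1)$-sphere boundaries shows that $\phi$ is a homotopy equivalence.

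Finally, the triangle commutes up to homotopy because on each $Z_i$ both $p_a^b$ and $q_{S_a}\circ\phi$ land in the contractible $n$-ball $N_i$ (the former into $D_i\subseteq N_i^\circ$, the latter by the definition of the pinch map), while on $S_b\setminus\bigcup_iZ_i^\circ$ the difference $p_a^b$ versus $q_{S_a}\circ\phi=\rho\circ p_a^b$ is supported in $p_a^{-1}(\bigcup_iN_i)$ and can be killed by a straight-line homotopy inside each $N_i$.
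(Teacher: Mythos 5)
The central claim of your proposal --- that $p_a^b$ is injective on $S_b\smallsetminus\bigcup_iZ_i^\circ$ --- is false, and this failure is precisely the ``complicating point'' announced at the start of the paper's proof. Branching of geodesics from $x_0$ occurs not only at the new singular components $A_1,\ldots,A_p$ but also at the \emph{old} ones. If $A'$ is a component of $\Xi$ meeting $S_a$ as well as the interior of the shell, and $z\in A'$ satisfies $a<d(x_0,z)<b$, then $[x_0,z]$ meets $A'$ only at $z$ (since $x_0\notin A'$, the direction $[z,x_0]$ has positive $\phi$-coordinate in the join coordinates of $\Sigma_z$), and by the same computation as in the proof of Lemma \ref{l:reverseshadow} the continuations of $[x_0,z]$ past $z$ form a $k'$-dimensional family, where $k'=n-\dim A'$. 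Because $b-a<\delta$ these continuations reach $S_b$, giving a positive-dimensional fiber of $p_a^b$ over $p_a(z)$; and since the shell is too thin for a single geodesic to hit two distinct singular components, this fiber is disjoint from every $Z_i$. So $p_a^b$ fails badly to be a homeomorphism on the complement of the $Z_i$, and the gluing of homotopy equivalences you envision has nothing to glue to. Note also that this is not the step attributed to Ancel --- that is Claim \ref{horiz} below, about cell-like quotient maps.

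The paper's actual route is different. Rather than excising neighborhoods of the $D_i$, it \emph{quotients}: it collapses the arcs $p_a\circ h(e\times I)$ in $S_a$ coming from \emph{all} of $\Xi\cap S_a^b$ (old components included) to obtain $Q_a$, and collapses the $p_a^b$-preimages of those arcs to obtain $Q_b$. The induced map $Q_b\to Q_a$ then really is the connect-sum pinch (Claim \ref{vert}), and the quotient maps $S_a\to Q_a$, $S_b\to Q_b$ are homotopy equivalences by Lacher's cell-like mapping theorem (Claim \ref{horiz}). It is this quotienting that absorbs the branching at the old components. A repair of your argument would at minimum require enlarging your decomposition to also carve out (or collapse) neighborhoods of the old components in both $S_a$ and $S_b$, and then showing that this extra modification does not change the homotopy type --- which is essentially the content of the cell-like mapping argument you would then be re-deriving.
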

\begin{proof}
One complicating point here is that near the components of the
singular set, the projection map $p_a^b$ is not a local
homeomorphism.  This is true even if $S_a$ and $S_b$ intersect the
same components of the singular set.  
To deal with this issue, we replace $S_a$ and
$S_b$ with homotopy equivalent quotients $Q_a$ and $Q_b$ so that
$Q_b\cong Q_a\mathrel{\#}J$, chosen so that $p_a^b$ induces a map
$\bar{p}_Q\co Q_b\to Q_a$, and so that $p_Q$
is the map which pinches $J$ to a point.

The maps $h_{a,b,A}$ from Lemma \ref{l:intersect}
can be patched together to give an embedding
$h\co (S_a\cap \Xi) \times I\to S_a^b$ whose image is the union of
the components of $\Xi\cap S_a^b$ which intersect both boundary
components of the shell.
Since $S_a^b$ is a good shell, the map $p_a$ restricts to an
embedding of $\Xi \cap S_a^b$, and so $p_a\circ h$ is also an embedding.

We next give decompositions $\mc{G}_a$ and $\mc{G}_b$ of $S_a$ and
$S_b$ into closed sets.  A set in $\mc{G}_a$ is either a point outside
the image of $p_a\circ h$, or it is an arc of the form $p_a\circ h(e\times
I )$ for some $e\in S_a\cap\Xi$.  

A set in $\mc{G}_b$ is either a
point $z$ so that $p_a(z)$ lies outside the image of $p_a\circ h$, or it
is the preimage of one of the arcs of the form $p_a\circ h(e\times I)$.
Arguing as in Lemma \ref{l:reverseshadow}, each element of
$\mc{G}_b$ is either a point or homeomorphic to the cone on a closed
subset of a torus.

We leave it to the reader to check that the decompositions $\mc{G}_a$
and $\mc{G}_b$ are 
upper semicontinuous. It follows that $Q_a$ and $Q_b$ are Hausdorff, and
therefore compact metric spaces.

The projection map from $S_a$ sends elements of $\mc{G}_b$ to elements
of $\mc{G}_a$, and thus induces a continuous map $p_Q$ from $Q_b$
to $Q_a$.  We have the following commuting square:
\cdlabel{square}{S_b \ar[r]^{q_b}\ar[d]^{p_a^b} & Q_b\ar[d]^{p_Q} \\
           S_a \ar[r]^{q_a} & Q_a}
Note that the map $p_Q$ is a homeomorphism away from the image in
$Q_a$ of 
\[(p_a^b)^{-1}\left(p_a(\cup_iA_i\cap S_a^b)\right).\]
The Proposition now reduces to two claims:
\begin{claim}\label{vert}
The space $Q_b$ is homeomorphic to the connect sum $Q_a\consum J$,
and $p_Q$ is homotopic to the map which pinches $J$ to a point. 
\end{claim}
\begin{claim}\label{horiz}
The horizontal maps in \eqref{square} are homotopy equivalences.
\end{claim}

\begin{proof}[Proof of Claim \ref{vert}]
As noted above, $p_Q$ is a homeomorphism away from
\[q_a\left((p_a^b)^{-1}(\cup_{i=1}^p A_i\cap S_a^b)\right),\]
where the $A_i$ are those components
of the singular set which intersect $S_a^b$ but not $S_a$.
A small regular neighborhood in $S_a$ of each $l_i$--dimensional disk 
$D_i:=p_a (A_i\cap S_a^b)$ is a ball $B_i$
(again using Lemma \ref{l:intersect}), so this implies that $Q_b$ is
homeomorphic to the connect sum of $Q_a$ with some collection of
spaces $\{K_i\mid 1\leq i\leq p\}$ where each $K_i$ is equal to
$(p_a^b)^{-1}(B_i)$, capped off with a ball.

We must describe the topology of the spaces $(p_a^b)^{-1}(B_i)$.
The topology of $(p_a^b)^{-1}(D_i)$ is given in Lemma
\ref{l:reverseshadow}:  Each such space is homeomorphic to a join
$S^{{l_i}-1} \join P^{k_i}$ with $k_i+l_i=n$ and $P$ homeomorphic to a
$k_i$--torus with a small open ball removed. 
It follows that $K_i$ is homeomorphic to the
join of a $k_i$--torus with an $(l_i-1)$--sphere.

Putting the pieces together, we see that $Q_b$ is homeomorphic to a
connect sum of $Q_a$ with the space $J$ described in the statement, and
projection from $Q_b$ to $Q_a$ is homotopic to the map which pinches
$J$ to a point.  
\end{proof}

\begin{proof}[Proof of Claim \ref{horiz}]
We first claim that the quotient spaces $Q_a$ and $Q_b$ are ENRs.
Indeed, Lemma \ref{l:enr} implies that the metric spheres $S_a$ and
$S_b$ are ENRs, so
each of $Q_a$ and $Q_b$ is a quotient of an ENR by a
decomposition into closed contractible (in particular cell-like)
sets.  A theorem of Lacher \cite[Corollary 3.3]{La} says that if such
a quotient is metrizable and finite dimensional, then it is an ENR.
We have already shown that $Q_a$ and $Q_b$ are metrizable.  Moreover,
there is an obvious decomposition of each into an open $n$--manifold
and a union of closed annuli and disks of strictly lower
dimension. (Only annuli are needed for $Q_a$.)
Applying the Addition Theorem from dimension theory (see,
e.g. \cite[3.1.17]{En}), we obtain the finite dimensionality of $Q_a$
and $Q_b$.

It now follows from another theorem of Lacher \cite[Theorem 1.2]{La}
that the quotient maps are homotopy equivalences.
\end{proof}

The Proposition now follows from the two claims.
\end{proof}

\begin{theorem}\label{t:cohomology}
Let $n\geq 2$.
Let $M$ be a hyperbolic $(n+1)$--manifold with $m$ toral cusps
$E_1,\ldots,E_m$, let $\overline{M}$ be a manifold with boundary obtained from $M$
by removing open horospherical neighborhoods of the cusps,
and let 
$\{T_1,\ldots,T_m\}$ be totally geodesic tori in $\partial\overline{M}$ satisfying
the hypotheses of Theorem \ref{t:filling}.  Let $s$ be the maximum of
the dimensions of $\{T_1,\ldots,T_m\}$.
Let $X=M(T_1,\ldots,T_m)$ be obtained from $M$ by filling along
$\{T_1,\ldots,T_m\}$, and let $G = \pi_1 (X)$.

The cohomology of $G$ with $\Z G$ coefficients satisfies:
\[H^q( G;\Z G)=
\begin{cases}
  0 & \quad q \leq n-s+1\mbox{ or }q>n+1\\
  \Z^\infty & \quad n-s+2 \leq  q \leq n\\
  \Z & \quad q=n+1.
\end{cases}
\]
\end{theorem}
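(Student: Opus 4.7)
The plan is to compute $H^*(G;\Z G)$ by applying Bestvina's theorem (Theorem \ref{t:bestvina}), which identifies $H^{q+1}(G;\Z G)$ with the reduced \v{C}ech cohomology $\tilde{\check H}^q(Z)$ of the visual boundary $Z=\partial X$ of the CAT$(0)$ universal cover $X$ of $M(T_1,\ldots,T_m)$. Since $X$ is CAT$(0)$, $Z$ is homeomorphic to the inverse limit of the metric spheres $S_r$ about a fixed basepoint $x_0\in X\setminus\Xi$, with bonding maps the geodesic projections. Because each $S_r$ is a compact ENR (Lemma \ref{l:enr}), continuity of \v{C}ech cohomology with respect to inverse limits of compact Hausdorff spaces yields
\[ \tilde{\check H}^q(Z)\cong \dirlim_r\tilde H^q(S_r), \]
reducing the theorem to an analysis of this direct system.

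Next I would build up the homotopy types of the $S_r$ inductively via Proposition \ref{p:pinch}. Using Lemmas \ref{l:good} and \ref{l:subgood}, together with the discreteness of the set of critical radii $\{d(x_0,A):A\text{ a component of }\Xi\}$, one can choose an increasing sequence $r_0<r_1<\cdots$ with $r_n\to\infty$ such that every shell $S_{r_n}^{r_{n+1}}$ is good and $S_{r_0}$ is an ordinary $n$-sphere (the latter exists because $X$ is Riemannian near $x_0$). Proposition \ref{p:pinch} then provides homotopy equivalences
\[ S_{r_{n+1}}\simeq S_{r_n}\consum\mathop{\#}_{A}\bigl(S^{l_A-1}\join T^{k_A}\bigr), \]
where the inner connect sum runs over the singular components $A$ with $r_n\leq d(x_0,A)<r_{n+1}$, $l_A=\dim A$, and $k_A=n-l_A$; moreover the projection $S_{r_{n+1}}\to S_{r_n}$ is homotopic to the map pinching the newly-added summands to a point. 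Since $G$ acts with finitely many orbits of singular components and infinitely many translates in each orbit, infinitely many components of each realized dimension appear as $r\to\infty$.

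By the join-suspension identity, $\tilde H^q(S^{l-1}\join T^k)\cong \tilde H^{q-l}(T^k)$, which equals $\Z^{\binom{k}{q-l}}$ for $l+1\leq q\leq n$ and vanishes otherwise. Each such join is a closed orientable $n$-dimensional pseudomanifold, and since the connect sum (Definition \ref{d:connectsum}) is formed inside the dense open $n$-manifold portion, a Mayer-Vietoris argument on $(Y_1\setminus\mathring D)\cup(Y_2\setminus\mathring D)$ glued along $S^{n-1}$ yields the splitting
\[ \tilde H^q(Y_1\consum Y_2)\cong \tilde H^q(Y_1)\oplus \tilde H^q(Y_2) \]
for $0<q<n$, together with $H^n(Y_1\consum Y_2)\cong \Z$ on which the pinch map induces the identity, while in lower degrees it is the projection onto $\tilde H^q(Y_1)$. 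Consequently each transition $\tilde H^q(S_{r_n})\to \tilde H^q(S_{r_{n+1}})$ is a split injection for $0<q<n$ and an isomorphism for $q=n$, so the direct limit accumulates contributions of all singular components without cancellation.

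The minimum singular dimension is $l_{\min}=n-s$, so summands contribute only in degrees $n-s+1,\ldots,n$. For $q\leq n-s$ no summand contributes, hence $\tilde{\check H}^q(Z)=0$; for $n-s+1\leq q\leq n-1$ the infinitely many components of dimension $n-s$ each contribute $\Z^{\binom{s}{q-n+s}}$ with $\binom{s}{q-n+s}\neq 0$, yielding $\Z^\infty$; for $q=n$ the direct limit stabilizes at $\Z$; and $\tilde{\check H}^q(Z)=0$ for $q>n$ because the $S_r$ are $n$-dimensional. Connectedness of each $S_r$ combined with surjectivity of the bonding maps yields $\tilde{\check H}^0(Z)=0$, and Bestvina's shift $q\mapsto q+1$ produces the stated formula. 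The principal technical hurdle is justifying the Mayer-Vietoris splitting of the connect sum at the pseudomanifold level (the summands are not manifolds when $k\geq 2$), which is handled by performing the connect sum inside the dense manifold portion; the remainder is bookkeeping with the join formula.
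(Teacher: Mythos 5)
Your proposal is correct and follows essentially the same route as the paper: pass to the visual boundary via Bestvina's theorem, write it as an inverse limit of metric spheres with bonding maps along good shells, apply Proposition \ref{p:pinch} to identify each bonding map as a pinch of a connect sum $\mathop{\#}S^{l_i-1}\join T^{k_i}$, and read off the direct limit of cohomology groups. The only difference is that you spell out the step the paper compresses as ``elementary algebraic topology,'' namely the suspension identification $S^{l-1}\join T^{k}\cong\Sigma^{l}T^{k}$ and the Mayer--Vietoris computation of the cohomology of a connect sum of closed orientable pseudomanifolds formed inside their dense manifold parts; this is a welcome amplification rather than a new idea.
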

\begin{proof}
Let $Y$ be the universal cover of $X$.  By Theorem \ref{t:filling},
the space $Y$ is CAT$(0)$.  As pointed out in
\cite{B}, the visual boundary $\partial Y$ is a
boundary for $G$, and so we have 
$H^q(G;\Z G)=\check{H}^{q-1}(\partial Y;\Z)$ for all $q$.

We thus compute the \v{C}ech cohomology of $\partial Y$.  

As at the beginning of Section \ref{s:inverselimit}, choose
some $x_0$ in $Y$, not in the singular set, and write $S_r$ for
the metric sphere of radius $r$ centered as $x$.
As already noted,
the visual boundary of $Y$ is homeomorphic to the
inverse limit of the system of spheres centered at $x$, i.e.,
\[\partial Y \cong \invlim\left\{S_t\stackrel{p_s^t}{\longrightarrow} S_s\mid s, t\in \R\quad\mbox{and}\quad
t\geq s\right\}\]
where $p_t^s$ is projection along geodesics toward $x_0$.  In fact,
any sub-collection of spheres $\{S_{r_i}\mid i\in \N\}$ with
$\lim_{i\to\infty}r_i=\infty$ determines the inverse limit.
By Lemmas \ref{l:subgood} and \ref{l:good}, we may therefore
choose the sequence $\{r_i\}$ so that each shell $S_{r_i}^{r_{i+1}}$
is good.  Let $p_i = p_{r_{i}}^{r_{i+1}}$.  We have
\[\partial Y
\cong\invlim\left\{S_{r_{i+1}}\stackrel{p_i}{\longrightarrow}S_{r_i} \mid 
i\in \N\right\},\]
and so
\[\check{H}^q(\partial Y; \Z) =
\dirlim
\left\{H^q(S_{r_i};\Z)\xrightarrow{{p_i^*}}H^q(S_{r_{i+1}};\Z)\mid
i\in \N \right\}.\]
For each $i$, we apply Proposition \ref{p:pinch} to compute $p_i^*$.
Let $J_i$ be the space $J$ from the proposition, applied with $b =
r_{i+1}$ and $a=r_i$, and let $s_i$ be the largest dimension of any
torus $T_j$ occurring in $J_i= \mathop{\#}_{j=1}^{p_i} S_j\join T_j$.
Elementary algebraic topology shows that $p_i^*$ is injective, and
that $\coker(p_i^*)$ is free
abelian of nonzero rank in dimension $j$ for 
\[n-s_{i}+1\leq j\leq n-1,\]
and zero in all other dimensions.  Note that
$s_i = s$ for infinitely many $i\in \N$.

It follows that the (reduced) \v{C}ech
cohomology of $\partial G$ is 
\[\check{H}^q(\partial G;\Z)=\left\{\begin{array}{ll}
 0 & \mbox{if}\quad q \leq n-s\mbox{ or } q>n\\
 \Z^\infty & \mbox{if}\quad n-s+1 \leq  q \leq n-1\\
 \Z & \mbox{if}\quad q=n.
\end{array}\right.
\]

Applying Theorem \ref{t:bestvina} gives the desired result.
\end{proof}

\begin{corollary}\label{c:notpdn}
Let $M$, $\{T_1,\ldots,T_m\}$ satisfy the hypotheses of Theorem
\ref{t:cohomology}. 
The group $G=\pi_1(M(T_1,\ldots,T_m))$ is a Poincar\'e duality group
if and only if every $T_i$ has dimension one.
\end{corollary}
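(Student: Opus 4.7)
The plan is to read the corollary directly off the cohomological computation in Theorem \ref{t:cohomology}. Recall that a group $G$ of type $\mathrm{FP}$ with finite cohomological dimension is a Poincar\'e duality group of dimension $d$ precisely when $H^d(G;\Z G) \cong \Z$ and $H^q(G;\Z G) = 0$ for every $q \neq d$. The group $G = \pi_1(M(T_1,\ldots,T_m))$ acts freely, cocompactly, and by isometries on the contractible CAT$(0)$ space supplied by Theorem \ref{t:filling} and Corollary \ref{c:cwif}, so $G$ has a finite $K(G,1)$ and in particular satisfies both finiteness hypotheses. It therefore suffices to inspect the shape of $H^*(G;\Z G)$.

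For the ``only if'' direction, I would assume $G$ is a Poincar\'e duality group and apply Theorem \ref{t:cohomology} with $s = \max_i \dim(T_i)$. The theorem gives $H^{n+1}(G;\Z G) = \Z$, so the duality dimension must equal $n+1$, and hence PD forces $H^q(G;\Z G) = 0$ for every $q \leq n$. The $\Z^\infty$ summands appear in the range $n-s+2 \leq q \leq n$, which is empty exactly when $s \leq 1$. Combined with the constraint $\dim(T_i) \geq 1$ built into Definition \ref{def.cone}, this forces every $T_i$ to be one-dimensional.

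For the converse, suppose each $T_i$ has dimension one. Then each filling core has dimension $n-1$, so by the remark following Definition \ref{def.cone} the filling $M(T_1,\ldots,T_m)$ is an honest closed $(n+1)$--manifold (the Schroeder case of Remark \ref{r:specialcases}). By Theorem \ref{t:filling}\eqref{fill3} it carries a locally CAT$(-1)$ metric, so its universal cover is contractible and the manifold is aspherical. Hence $G$ is the fundamental group of a closed aspherical $(n+1)$--manifold, and is therefore a Poincar\'e duality group of dimension $n+1$.

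There is no substantive obstacle here: all the real work has been done in Theorem \ref{t:cohomology}, and the corollary amounts to bookkeeping on the ranges of nonvanishing cohomology. The only small point worth care is verifying that in the $s=1$ case one genuinely obtains a closed manifold (rather than a more general pseudomanifold), which is immediate from the dimension count on the filling cores and the remark after Definition \ref{def.cone}.
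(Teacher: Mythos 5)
Your argument is correct and follows essentially the same path as the paper: the ``if'' direction is the observation that when every $T_i$ is one-dimensional, $M(T_1,\ldots,T_m)$ is a closed aspherical manifold, and the ``only if'' direction reads off the nonvanishing $\Z^\infty$ summands of $H^q(G;\Z G)$ from Theorem \ref{t:cohomology}. The one cosmetic difference is that you invoke the Bieri--Eckmann characterization of Poincar\'e duality groups directly, whereas the paper phrases the obstruction via the second part of Theorem \ref{t:bestvina} (a PD$_{n+1}$ group would force the visual boundary to be a \v{C}ech cohomology $n$--sphere); these are two ways of saying the same thing.

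There is one small but genuine slip in your converse. You cite Theorem \ref{t:filling}\eqref{fill3} to get a locally CAT$(-1)$ metric when every $\dim(T_i)=1$. But \eqref{fill3} requires each $T_i$ to have \emph{codimension} at most $1$ in $\partial E_i \cong N_i$, i.e.\ $\dim(T_i) \geq n-1$. When $\dim(T_i)=1$ this holds only for $n=2$; for $n\geq 3$ the filling is merely locally CAT$(0)$. Fortunately this does not damage the argument: Theorem \ref{t:filling}\eqref{fill2} already gives a locally CAT$(0)$ metric, hence a contractible universal cover and asphericity, which is all you need. You should cite \eqref{fill2} rather than \eqref{fill3} here, matching what the paper itself does (``non-positively curved (and hence aspherical)'').
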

\begin{proof}
If every $T_i$ has dimension one, then $M(T_1,\ldots,T_m)$ is a
non-positively curved (and hence aspherical) closed manifold of
dimension $n+1$, and so $\pi_1(M(T_1,\ldots,T_m))$ must be a
Poincar\'e duality group of dimension $n+1$.  (The special case in which every $T_i$ has dimension one was already
handled by Schroeder in \cite{Sc}.)

If some $T_i$ has dimension different from one, then Theorem
\ref{t:cohomology} implies that $G$ has cohomological dimension $n+1$, but
$H^{q}(G;\Z G)\neq 0$ for some $q\neq n+1$, and so $G$ cannot be an
$(n+1)$--dimensional Poincar\'e duality group, by the second part of
Theorem \ref{t:bestvina}.
\end{proof}
\begin{remark}
  The proof that our examples are not manifold groups is
  essentially the same as that used in \cite{MS} for the case in which
  every cusp is coned off completely.
  The difference in
  our setting is that we get a wider variety of behavior at infinity.
\end{remark}
\subsection{Simple connectivity at infinity}

\begin{proposition}\label{p:simpconn}
  Let $M(T_1,\ldots,T_m)$ be a $2\pi$--filling of an
  $(n+1)$--dimensional hyperbolic manifold $M$.  Unless some $T_i$ is
  $n$--dimensional, the universal cover $X$ of $M(T_1,\ldots,T_m)$ is
  simply connected at infinity.
\end{proposition}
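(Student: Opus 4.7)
The plan is to reduce simple connectivity at infinity for $X$ to simple connectivity of arbitrarily large metric spheres $S_r$ centered at a fixed basepoint $x_0\in X\setminus\Xi$, and then to establish the latter by induction on good shells, using the homotopy decomposition $S_b\simeq S_a\consum J$ from Proposition \ref{p:pinch}.

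First, since $X$ is a proper CAT$(0)$ space, geodesic projection gives a deformation retraction $p_r\co X\setminus\mathring{B}_r\to S_r$ whose tracks never decrease the distance to $x_0$ below $r$. I would use this to show that if, for every $R_1$, one can find $R_2>R_1$ with $\pi_1(S_{R_2})=0$, then $X$ is simply connected at infinity: any loop in $X\setminus B_{R_2}$ can be pushed without entering $B_{R_2}$ to a loop on $S_{R_2}\subset X\setminus B_{R_1}$, which is then nullhomotopic in $S_{R_2}$. Since regular values are dense (Lemma \ref{l:good}), it suffices to show $\pi_1(S_r)=0$ for all regular $r$.

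Next I would prove by induction on good shells that $\pi_1(S_r)=0$ for every regular value $r$. For $r<d(x_0,\Xi)$ the sphere $S_r$ is an ordinary topological $n$-sphere, since $X$ is Riemannian on $B_r$ and $n\geq 2$, giving the base case. For the inductive step, given a good shell $S_a^b$ with $\pi_1(S_a)=0$, Proposition \ref{p:pinch} yields
\[S_b\simeq S_a\consum J,\qquad J=\mathop{\#}_{i=1}^p S^{l_i-1}\join T^{k_i},\]
where $l_i$ is the dimension of the $i$-th singular flat first entering the shell. Since by hypothesis no filling core is a point, every $l_i\geq 1$, so each spherical join $S^{l_i-1}\join T^{k_i}$ is simply connected: for $l_i\geq 2$ the sphere $S^{l_i-1}$ is path-connected, so its join with the nonempty space $T^{k_i}$ has trivial $\pi_1$; for $l_i=1$, $S^0\join T^{k_i}$ is the unreduced suspension of the connected space $T^{k_i}$, again simply connected.

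The induction is then completed by Van Kampen applied to the connect sums. For $n\geq 3$, the gluing $(n-1)$-sphere is simply connected and removing an open $n$-ball from the open dense manifold part of a compact $n$-pseudomanifold leaves $\pi_1$ unchanged, so $\pi_1(S_a\consum J)=\pi_1(S_a)\ast\pi_1(J)=0$. For $n=2$, the hypothesis forces every $T_i$ to be a circle, so each join piece is topologically an $S^2$, and the connect sum reduces to $S_a$ up to homeomorphism, giving $\pi_1(S_b)=\pi_1(S_a)=0$. The main obstacle I anticipate is the Van Kampen step: one must check that the connect sums of Definition \ref{d:connectsum} can be formed using disks lying in the open dense manifold parts of $S_a$ and of each join factor, so that their removal is $\pi_1$-inessential when $n\geq 3$; Lemma \ref{l:enr} together with the local product description from Lemma \ref{l:dirjoin} should make this verification routine.
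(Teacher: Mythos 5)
Your proof is correct and follows essentially the same approach as the paper: reduce simple connectivity at infinity to simple connectivity of arbitrarily large metric spheres via the deformation retraction toward $x_0$, then use Proposition \ref{p:pinch} inductively over good shells to identify $S_R$ as a connect sum of simply connected spherical joins $S^{l_i-1}\join T^{k_i}$. Your version spells out more of what the paper leaves implicit (the Van Kampen step for $n\geq 3$, the separate $n=2$ case where each join piece is an $S^2$ and the connect sum is trivial, and the point that the connect-sum disks can be taken in the dense manifold part of the spheres), but the underlying argument and the role of the hypothesis that no $T_i$ is $n$-dimensional (forcing $l_i\geq 1$) are identical.
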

\begin{proof}  
  We retain the notation of Subsection \ref{s:inverselimit}.  
  To show simple connectivity at infinity, it
  suffices to show that for every $r>0$ there is some $R>r$ so that
  any loop in $X\setminus B_R$ is null-homotopic in $X\setminus B_r$.

  Accordingly, let $r>0$, and 
  choose $r_1<r_2<r_3<\cdots$  as in the proof of Theorem
  \ref{t:cohomology}, so that each shell $S_{r_i}^{r_{i+1}}$ is
  good and so that $\lim_{i\to\infty} r_i=\infty$.  Choose some $i$ so
  that $r_i>r$.  We claim that $R = r_i$ suffices.

  Let $\gamma\co S^1\to X\setminus B_R$ be a loop.  It is easy to show
  that $X$ deformation retracts (holding $B_R$ fixed) to $B_R$, so
  $\gamma$ is homotopic in the complement of $B_r$ to a loop $\gamma'$
  in
  $S_R$.  
  Induction together with Proposition
  \ref{p:pinch} shows that $S_R$ is a connect sum of joins of tori and
  spheres.  Moreover,
  unless some $T_i$ is $n$--dimensional,
  each such join is simply connected, so
  $S_R$ is simply connected.  It follows that $\gamma'$ is
  null-homotopic in $S_R$, so $\gamma$ is null-homotopic in the
  complement of $B_r$.
\end{proof}

In \cite{Osa}, it is shown that systolic groups are never simply
connected at infinity. 
\begin{corollary}\label{c:notsys}
Let $G = \pi_1(M(T_1,\ldots,T_m))$, where $M(T_1,\ldots,T_m)$ is a $2\pi$--filling of an
$(n+1)$--dimensional hyperbolic manifold, and suppose that no $T_i$ is
$n$--dimensional. Then $G$ is not systolic.
\end{corollary}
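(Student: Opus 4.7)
The plan is to deduce this corollary essentially by combining two already-stated results: Proposition \ref{p:simpconn} and Osajda's theorem from \cite{Osa}. There is almost no work to do beyond putting these together correctly, but I want to make sure the hypotheses match up.

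First I would invoke Proposition \ref{p:simpconn}, which, under the stated hypothesis that no $T_i$ is $n$--dimensional, tells us that the universal cover $X$ of $M(T_1,\ldots,T_m)$ is simply connected at infinity. By Theorem \ref{t:filling} together with Corollary \ref{c:cwif}, the group $G$ acts properly and cocompactly by isometries on $X$, a complete CAT$(0)$ space. Since simple connectivity at infinity is a property of a group that admits a proper cocompact action on a contractible locally finite complex (and indeed, is a quasi-isometry invariant of such groups), the simple connectivity at infinity of $X$ passes to $G$.

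Finally, I would cite Osajda's main result from \cite{Osa}, stated immediately before the corollary, that any systolic group fails to be simply connected at infinity. Since $G$ is simply connected at infinity, this forces $G$ to be non-systolic, which is exactly the content of the corollary.

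There is no real obstacle here; the only thing to be careful about is the statement that simple connectivity at infinity of the CAT$(0)$ space $X$ is actually an invariant of the acting group $G$, so that Osajda's theorem (phrased for groups) can legitimately be applied. This is standard once we note that any two such contractible cocompact $G$--spaces are quasi-isometric, and simple connectivity at infinity is preserved under quasi-isometry between proper, simply connected, uniformly locally finite spaces.
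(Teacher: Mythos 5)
Your proposal is correct and matches the paper's (implicit) argument exactly: the corollary is stated immediately after Proposition \ref{p:simpconn} and the citation of Osajda's theorem, with no further proof given, precisely because it follows by combining those two facts as you describe. Your extra remark that simple connectivity at infinity of the cocompact CAT$(0)$ space $X$ is a well-defined property of $G$ is a sensible point of care that the paper leaves implicit.
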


\section{Further questions}\label{s:questions}
Suppose we are given a group $G$ and a collection of subgroups
$\mathcal{P}=\{P_1,\ldots,P_m\}$ with respect to which $G$ is
relatively hyperbolic.  Suppose further that normal subgroups $N_i\lhd
P_i$ (filling kernels)
are given, and $K = \llangle \cup_i N_i\rrangle_G$ is the normal
closure in $G$ of these subgroups.  The group $G(N_1,\ldots,N_m):=G/K$
is said to be a \emph{hyperbolic filling of $G$} if 
\begin{enumerate}
\item the obvious map $\psi_i\co P_i/N_i\to G(N_1,\ldots,N_m)$ is
  injective for each $i$, and
\item $G(N_i,\ldots,N_m)$ is hyperbolic relative to
  $\{\psi_i(P_i/N_i)\mid 1\leq i\leq m\}$.
\end{enumerate}

The main results of \cite{GM,Os} show that as long as
every filling kernel $N_i$ is composed entirely of ``long'' elements
of $P_i$, the group $G(N_1,\ldots,N_m)$ is always a hyperbolic filling
of $G$.

If $G$ acts properly and cocompactly on a CAT$(0)$ space with isolated
flats, then it is hyperbolic relative to the flat stabilizers
\cite{HK}.  These flat stabilizers are virtually abelian,
and so there are many candidates for the filling kernels as above.

\begin{question}
If $G$ acts properly and cocompactly on a CAT$(0)$ space with isolated
flats, does every hyperbolic filling of $G$ act properly and
cocompactly on some CAT$(0)$ space with isolated flats?  What can be said
about the dimension of this space?
\end{question}

\begin{question}
If $G$ acts properly and cocompactly on a CAT$(0)$ space with isolated
flats (or geometrically finitely on a CAT$(-1)$ space),
and $G'$ is a hyperbolic filling of $G$ which is hyperbolic,
does $G$ act properly and cocompactly on some CAT$(-1)$ space?  What
can be said about the dimension of this space?
\end{question}

In the current paper we have given answers to both of these questions in the
special case that $G$ is the fundamental group of a hyperbolic
$(n+1)$--manifold with toral cusps
and the filling kernels are direct summands of the
(free abelian)
peripheral subgroups.  It should be possible to appeal to Haefliger's theory
of nonpositively curved orbispaces and extend our results to the case
in which the filling kernels are arbitrary subgroups of the peripheral
subgroups.  

It seems much more challenging to extend the current techniques to hyperbolic
fillings of fundamental groups of finite volume
complex or quaternionic hyperbolic manifolds.

Focusing on fillings of the fundamental group $G$ of a fixed hyperbolic
manifold $M$ with toral cusps, other kinds of question
arise.  Theorem \ref{t:examples} says that there are infinitely many
isomorphism types of hyperbolic fillings of $M$.  On the other hand,
the discussion in Section \ref{s:visbound} shows that these fillings
have boundary whose shape is determined by the dimensions of the
filling tori.
\begin{question}\label{q:diff}
Let $n\geq 2$.
Let $M$ be a hyperbolic $(n+1)$--manifold with toral cusps, and 
let $A\subseteq \{1,\ldots,n\}$.  Let $\mc{M}_A$ be the collection of
fundamental groups of $2\pi$--fillings $M(T_1,\ldots,T_m)$ so that
\[\left\{\dim(T_i)\mid
i\in\{1,\ldots,m\}\right\}=A.\]
How many quasi-isometry types are there in $\mc{M}_A$?
\end{question}
The case of classical hyperbolic Dehn filling of $3$--manifolds is $A
= \{1\}$ and $n = 2$.  The $2\pi$ theorem together with the positive
solution to the geometrization conjecture by Perelman \cite{MoTi1,MoTi2} implies
that, in this case, every group in $\mc{M}_A$ is quasi-isometric to $\H^3$.

In Mosher and Sageev's setting in \cite{MS}, $A
= \{n\}$ and the set $\mc{M}_A$ has at most one element.  

In all other cases,  Question \ref{q:diff} seems to be open.

\providecommand\url[1]{\texttt{#1}}

\end{document}